\documentclass[a4paper,12pt]{article}
\usepackage[cp1251]{inputenc}
\usepackage[russian]{babel}
\usepackage{amsfonts, amssymb, amsmath, amsthm, amscd}
\usepackage{cite}
\textheight=220mm \textwidth=150mm \topmargin=-0.5in
\oddsidemargin=0in \evensidemargin=0in

\author{A.A. Vasil'eva}
\title{Widths of weighted Sobolev classes on a domain
with a peak: some limiting cases\footnote{The research was carried
out with the financial support of the Russian Foundation for Basic
Research (grants no. 13-01-00022, 12-01-00554)}}
\date{}
\begin{document}

\maketitle

\newenvironment{Biblio}{%
                  \renewcommand{\refname}{\footnotesize REFERENCES}%
                  }

\def\inff{\mathop{\smash\inf\vphantom\sup}}
\renewcommand{\le}{\leqslant}
\renewcommand{\ge}{\geqslant}
\newcommand{\sgn}{\mathrm {sgn}\,}
\newcommand{\inter}{\mathrm {int}\,}
\newcommand{\dist}{\mathrm {dist}}
\newcommand{\supp}{\mathrm {supp}\,}
\newcommand{\R}{\mathbb{R}}
\renewcommand{\C}{\mathbb{C}}
\newcommand{\Z}{\mathbb{Z}}
\newcommand{\N}{\mathbb{N}}
\newcommand{\Q}{\mathbb{Q}}
\theoremstyle{plain}
\newtheorem{Trm}{Theorem}
\newtheorem{trma}{Theorem}

\newtheorem{Def}{Definition}
\newtheorem{Cor}{Corollary}
\newtheorem{Lem}{Lemma}
\newtheorem{Rem}{Remark}
\newtheorem{Sta}{Proposition}
\renewcommand{\proofname}{\bf Proof}
\renewcommand{\thetrma}{\Alph{trma}}
\section{Introduction}
In this paper, order estimates for Kolmogorov, Gelfand and linear
widths of weighted Sobolev classes $W^r_{p,g}$ on a domain with a
peak in a weighted Lebesgue space $L_{q,v}$ are obtained. In
particular, it is proved that if the peak is defined by the
function $\varphi(t)=t^\sigma |\log t|^\theta$,
$r+\left(\sigma(d-1)+1\right)\left(\frac 1q-\frac 1p\right)=0$,
$g\equiv 1$, $v\equiv 1$, then this singularity may have effect on
the orders of widths. This supplements the following result of
Besov \cite{besov_peak_width}: if $\varphi(t)=t^\sigma$ and
$r+\left(\sigma(d-1)+1\right)\left(\frac 1q-\frac 1p\right)>0$,
then the orders of Kolmogorov widths are the same as for domains
with the Lipschitz boundary.

Let $d\in \N$, let $\Omega \subset \R^d$ be a
bounded domain (an open connected set), and let $g$,
$v:\Omega\rightarrow (0, \, \infty)$ be measurable functions.
For each measurable vector-valued function $\psi:\ \Omega\rightarrow
\R^m$, $\psi=(\psi_k) _{1\le k\le m}$, $p\in [1, \,
\infty]$, we put
$$
\|\psi\|_{L_p(\Omega)}= \Big\|\max _{1\le k\le m}|\psi _k |
\Big\|_p=\left(\int \limits _\Omega \max _{1\le k\le m}|\psi _k(x)
|^p\, dx\right)^{1/p}.
$$
Let $\overline{\beta}=(\beta _1, \, \dots, \, \beta _d)\in
\Z_+^d:=(\N\cup\{0\})^d$, $|\overline{\beta}| =\beta _1+
\ldots+\beta _d$. For any distribution $f$ defined on $\Omega$ we
write $\displaystyle \nabla ^r\!f=\left(\partial^{r}\!
f/\partial x^{\overline{\beta}}\right)_{|\overline{\beta}| =r}$
(here partial derivatives are taken in the sense
of distributions), and denote by $m_r$ the number of
components of the vector-valued distribution $\nabla ^r\!f$.
We set
$$
W^r_{p,g}(\Omega)=\left\{f:\ \Omega\rightarrow \R\big| \; \exists
\psi :\ \Omega\rightarrow \R^{m_r}\!:\ \| \psi \|
_{L_p(\Omega)}\le 1, \, \nabla ^r\! f=g\cdot \psi\right\}
$$
\Big(we denote the corresponding function $\psi$ by
$\displaystyle\frac{\nabla ^r\!f}{g}$\Big),
$$
\| f\|_{L_{q,v}(\Omega)}{=}\| f\|_{q,v}{=}\|
fv\|_{L_q(\Omega)},\qquad L_{q,v}(\Omega)=\left\{f:\Omega
\rightarrow \R| \; \ \| f\| _{q,v}<\infty\right\}.
$$
We call the set $W^r_{p,g}(\Omega)$ a weighted Sobolev class.
Notice that $W^r_p(\Omega)=W^r_{p,1}(\Omega)$ is the non-weighted
Sobolev class.

For properties of weighted Sobolev spaces and their
generalizations, see the books \cite{triebel, kufner, turesson,
edm_trieb_book, triebel1, edm_ev_book} and the survey paper
\cite{kudr_nik}. Mazya \cite{mazya60} obtained the necessary and
sufficient condition for the embedding of $W^1_p(\Omega)$ in
$L_q(\Omega)$ in terms of isoperimetric and capacity inequalities.
Reshetnyak \cite{resh1, resh2}, Besov \cite{besov_84} and Bojarski
\cite{b_bojarski} showed that for a John domain $\Omega$ or a
domain with a flexible cone property, the condition of continuous
embedding of $W^r_p(\Omega)$ in $L_q(\Omega)$ is the same as for a
domain with the Lipschitz boundary; in \cite{besov_86} this result
was generalized for domains with decaying flexible cone condition.
The problem on embedding of weighted Sobolev classes on domains
with the irregular boundary that has zero angles was also
intensively studied (see, e.g., \cite{besov_84, evans_har,
hajl_kosk, kilp_maly, besov1, besov2, besov3, besov4, besov5,
labutin1, labutin2, mazya_poborchii, ambr_l}).

Denote by $AC[t_0, \, t_1]$ the space of absolutely continuous
functions on an interval $[t_0, \, t_1]$.

Let $d\ge 2$. Denote by $B_a(x)$ the closed Euclidean ball of
radius $a$ in $\R^d$ centered at the point $x$. Given $x=(y, \,
z)$, $y\in \R^{d-1}$, $z\in \R$, we denote by $B_a^{d-1}(y)$ the
closed Euclidean ball of radius  $a$ in $\R^{d-1}$ centered at the
point $y$. Set $B^{d-1}=B^{d-1}_1(0)$.
\begin{Def}
\label{fca} Let $G\subset\R^d$ be a bounded domain, $a>0$, $x_*\in
G$. We say that $G \in {\bf FC}(a, \, x_*)$, if for any $x\in G$
there exists a curve $\gamma _x:[0, \, T(x)] \rightarrow G$ with
the following properties:
\begin{enumerate}
\item $\gamma _x\in AC[0, \, T(x)]$, $|\dot \gamma _x|=1$ a.e.,
\item $\gamma _x(0)=x$, $\gamma _x(T(x))=x_*$,
\item $B_{at}(\gamma _x(t))\subset G$ for any
$t\in [0, \, T(x)]$.
\end{enumerate}
We write $G\in {\bf FC}(a)$ if $G\in {\bf FC}(a, \, x_*)$
for some $x_*\in G$. If $G\in {\bf FC}(a)$ for some
$a>0$, then we say that $G$ satisfies the John condition (and call
$G$ a John domain).
\end{Def}
For a bounded domain, the John condition is equivalent to the
flexible cone condition (see the definition in \cite{besov_il1}).

Let $X$, $Y$ be sets, $f_1$, $f_2:\ X\times Y\rightarrow
\R_+$. We write $f_1(x, \, y)\underset{y}{\lesssim} f_2(x, \, y)$
(or $f_2(x, \, y)\underset{y}{\gtrsim} f_1(x, \, y)$) if, for
any $y\in Y$, there exists $c(y)>0$ such that $f_1(x, \, y)\le
c(y)f_2(x, \, y)$ for each $x\in X$; $f_1(x, \,
y)\underset{y}{\asymp} f_2(x, \, y)$ if $f_1(x, \, y)
\underset{y}{\lesssim} f_2(x, \, y)$ and $f_2(x, \,
y)\underset{y}{\lesssim} f_1(x, \, y)$.

For $G\subset \R^d$, $x_*\in G$ we denote
$$
\overline{R}_{x_*}(G)=\sup_{x\in G} \|x-x_*\|_{l_2^d}, \quad
\underline{R}_{x_*}(G)=\inf _{x\in \partial G}\|x-x_*\|_{l_2^d}
$$
(here $\partial G$ is the boundary of the set $G$).
Notice that if $G\in {\bf FC}(a, \, x_*)$, then
\begin{align}
\label{rrr} \overline{R}_{x_*}(G)\underset{a,d}{\asymp}
\underline{R}_{x_*}(G).
\end{align}

For $z\in \R$ we set $\eta_z=(0, \, \dots, \, 0, \, z)\in \R^d$.

Let $\varphi:(0, \, 1)\rightarrow (0, \, \infty)$ be
an increasing Lipschitz function such that $\lim \limits_{z\to
+0}\varphi(z)=\lim \limits _{z\to +0} \varphi'(z)=0$.
\begin{Def}
\label{fcaf} Let $a>0$, $\tau_*>0$. We write $\Omega\in {\bf
FC}_{\varphi,\tau_*}(a)$ if $\Omega =\cup _{z\in (0, \,
\tau_*]} \Omega _z$, where $\Omega_z\in {\bf FC}(a, \, \eta_z)$,
\begin{align} \label{opr2k}
\kappa_\Omega:=\sup _{z\in (0, \, 1)}\frac{\overline{R}_{\eta_z}
(\Omega_z)}{z}<1
\end{align}
and
\begin{align}
\label{opr2phi} \underline{c}\varphi(z)\le
\overline{R}_{\eta_z}(\Omega_z)\le \overline{c} \varphi(z), \quad
z\in (0, \, \tau_*]
\end{align}
for some $0<\underline{c}<\overline{c}<\infty$.
\end{Def}

In \cite{mazya_poborchii} a criterion for the continuous embedding
of $W^r_p(D_{\varphi, \, G})$ in $L_q(D_{\varphi, \, G})$ was
obtained, where
$$
D_{\varphi,\, G}=\{x=(y, \, z)\in \R^d:\, z\in (0, \, 1), \;
y/\varphi(z)\in G\}
$$
and $G\subset \R^{d-1}$ is a bounded domain satisfying the cone
condition. Notice that $D_{\varphi, \, G}\in {\bf
FC}_{\varphi,\tau_*}(a)$ for some $a>0$, $\tau_*>0$. This result
will be generalized for weighted Sobolev spaces (with weights
depending only on $z$) and domains from the class ${\bf
FC}_{\varphi,\tau_*}(a)$.

Without loss of generality we may assume that
$\tau_*=\frac 12$. Throughout this paper we denote
${\bf FC}_{\varphi}(a)={\bf FC}_{\varphi,1/2}(a)$.

Let $g_0:(0, \, \infty)\rightarrow (0, \, \infty)$, $v_0:(0, \,
\infty)\rightarrow (0, \, \infty)$ be measurable functions, $g$,
$v:\Omega \rightarrow (0, \, \infty)$, $g(y, \, z)=g_0(z)$, $v(y,
\, z)=v_0(z)$. In addition, we suppose that there is $C_*>0$
such that
\begin{align}
\label{go}
\begin{array}{c}
\frac{g_0(t)}{g_0(s)}\le C_*,\quad \frac{v_0(t)}{v_0(s)}\le C_*,
\quad z\in (0, \, 1), \\ t, \, s\in [\max\{z/2, \, z-\varphi(z)\},
\, z+\varphi(z)].
\end{array}
\end{align}
Notice that $\max\{z/2, \, z-\varphi(z)\}=z-\varphi(z)$
for sufficiently small $z$.

Let $1<p\le q<\infty$, $r\in \N$, $\delta:=r +\frac dq-\frac
dp>0$. We write $$\mathfrak{Z}_1=(p, \, q, \, r, \, d,\, a, \,
\varphi, \, \kappa_\Omega, \, \underline{c}, \, \overline{c}, \,
C_*).$$

For $x=(x_1, \, \dots, \, x_d)\in \R^d$ we set $x'=(x_1, \,
\dots, \, x_{d-1})$.

Denote $\overline{R}(z)=\overline{R}_{\eta_z} (\Omega_z)$,
$\underline{R}(z)=\underline{R}_{\eta_z} (\Omega_z)$.

For $0\le \tau_-<\tau_+\le \frac 12$ we set $\Omega_{[\tau_-, \,
\tau_+]}=\cup _{z\in [\tau_-, \, \tau_+]}\Omega_z$ (with
$\Omega_0=\varnothing$). Observe that $\Omega_{[\tau_-, \,
\tau_+]}$ is a domain.
\begin{Trm}
\label{emb_trm_peak} Let $\Omega\in {\bf FC}_\varphi(a)$, $0\le
\tau_-<\tau_+\le \frac 12$,
\begin{align}
\label{tau_mt} \tau_-<\tau_+-\underline{R}_{\tau_+}, \quad
0<\lambda<1, \quad R=\lambda\underline{R} _{\tau_+},
\end{align}
\begin{align}
\label{wwr_pg} W^r_{p,g}(\Omega_{[\tau_-, \, \tau_+]}, \,
\Gamma_0^R)=\{f\in W^r_{p,g}(\Omega_{[\tau_-, \, \tau_+]}):\,
f|_{B_R(\eta_{\tau_+})}=0\}.
\end{align}
Then the set $W^r_{p,g}(\Omega_{[\tau_-, \, \tau_+]}, \,
\Gamma_0^R)$ is bounded in $L_{q,v}(\Omega_{[\tau_-, \, \tau_+]})$
if and only if
$$
A_{[\tau_-, \, \tau_+]}:=\max\{A_{0,[\tau_-, \, \tau_+]}, \,
A_{1,[\tau_-, \, \tau_+]}\}<\infty,
$$
with
$$
A_{0,[\tau_-, \, \tau_+]}=\sup_{t\in (\tau_-, \, \tau_+)}
\left(\int \limits_{\tau_-}^t \varphi^{d-1}(z)v_0^q(z)\,
dz\right)^{1/q} \left(\int \limits _t^{\tau_+} (z-t)^{p'(r-1)}
g_0^{p'}(z) \varphi ^{\frac{d-1}{1-p}} (z) \, dz\right)^{1/p'},
$$
$$
A_{1,[\tau_-, \, \tau_+]}=\sup_{t\in (\tau_-, \, \tau_+)}
\left(\int \limits_{\tau_-}^t (t-z)^{q(r-1)} \varphi^{d-1}(z)
v_0^q(z)\, dz\right)^{1/q} \left(\int \limits _t^{\tau_+}
g_0^{p'}(z) \varphi ^{\frac{d-1}{1-p}} (z) \, dz\right)^{1/p'}.
$$
Moreover, if $I:{\rm span}\, W^r_{p,g}(\Omega_{[\tau_-, \,
\tau_+]}, \, \Gamma_0^R) \rightarrow L_{q,v}(\Omega_{[\tau_-, \,
\tau_+]})$ is the embedding operator, then $\|I\|
\underset{\mathfrak{Z}_1, \, \lambda}{\asymp} A_{[\tau_-, \,
\tau_+]}$.
\end{Trm}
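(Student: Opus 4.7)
The plan is to reduce the bound for the embedding norm to two one-dimensional weighted inequalities for the Riemann–Liouville operator
$$
(Th)(t) = \int_t^{\tau_+} (z-t)^{r-1} h(z)\, dz,
$$
whose $L_p\to L_q$ boundedness (for $1<p\le q<\infty$) with outer weight $\varphi^{d-1}(z)v_0^q(z)$ and inner weight $g_0^{p'}(z)\varphi^{(d-1)/(1-p)}(z)$ is characterized, by the well-known two-weight Hardy criterion for kernels of order $r-1$, precisely by the finiteness of $\max(A_{0,[\tau_-,\tau_+]}, A_{1,[\tau_-,\tau_+]})$. The structural ingredients enabling this reduction are the layered decomposition $\Omega_{[\tau_-,\tau_+]}=\bigcup_z\Omega_z$, the geometric equivalence $\overline{R}(z)\asymp\underline{R}(z)\asymp\varphi(z)$ supplied by (\ref{rrr}) and (\ref{opr2phi}), and the quasiconstancy of $g_0,v_0$ on each layer encoded in (\ref{go}).

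For the upper bound on $\|I\|$, I would first construct, on each John cross-section $\Omega_z$, a polynomial projector $P_z:W^r_p(\Omega_z)\to\mathcal{P}_{r-1}$ satisfying a Bramble–Hilbert / Reshetnyak–Besov inequality
$$
\|f-P_zf\|_{L_\infty(\Omega_z)} \lesssim \varphi(z)^{r-d/p}\,\|\nabla^r f\|_{L_p(\Omega_z)}
$$
with constants depending only on $a,d,r,p$. The boundary condition $f|_{B_R(\eta_{\tau_+})}=0$ forces $P_{\tau_+}f\equiv 0$, so differentiating the coefficients of $P_zf$ in $z$ and integrating back $r$ times with the kernel $(w-z)^{r-1}$ gives a representation of the axial value $P_zf(\eta_z)$ as $(Th_f)(z)$ for some $h_f$ whose $L_p$-norm against the weight $g_0^{p'}\varphi^{(d-1)/(1-p)}$ is dominated by $\|\nabla^r f/g\|_{L_p(\Omega)}$. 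Combining this with the local Bramble–Hilbert estimate, multiplying by $v_0(z)$ and integrating over $\Omega_z$ (which yields the outer weight $\varphi^{d-1}(z)v_0^q(z)$), and invoking the two-weight criterion for $T$ closes the sufficiency: $\|I\|\lesssim A_{[\tau_-,\tau_+]}$.

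For the matching lower bound I would construct, for each $t\in(\tau_-,\tau_+)$ and each $j\in\{0,1\}$, a radially symmetric test function $f_{t,j}$ depending essentially only on $z$ whose $\nabla^r f_{t,j}/g$ is a dual extremal for the corresponding one-dimensional Hardy inequality, so that the ratio $\|f_{t,j}v\|_{L_q}/\|\nabla^r f_{t,j}/g\|_{L_p}$ realizes the supremum $A_{j,[\tau_-,\tau_+]}$ up to a constant depending only on $\mathfrak{Z}_1,\lambda$. Taking suprema over $t$ yields $\|I\|\gtrsim \max(A_0,A_1)$; the construction is the standard one for two-weight Hardy operators with fractional-order kernel, and the condition $\tau_-<\tau_+-\underline{R}_{\tau_+}$ in (\ref{tau_mt}) guarantees that $f_{t,j}$ can be chosen to vanish on $B_R(\eta_{\tau_+})$.

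The principal obstacle is the sufficiency half, specifically the passage from a family of polynomial projectors on individual cross-sections $\Omega_z$ to a single representation of $f$ as $(Th_f)(z)$ plus a controlled remainder. A naive pointwise differentiation in $z$ is not legitimate because consecutive $\Omega_z$ and $\Omega_{z'}$ overlap only in a thin slab. The remedy is to build the representation along a dyadic chain $\{z_k\}$ with $z_{k+1}-z_k\asymp\varphi(z_k)$, glue consecutive polynomial approximations using (\ref{go}) to absorb the weight oscillation, and recover the continuous representation by a telescoping sum whose terms are individually estimated by Bramble–Hilbert on $\Omega_{z_k}$. Verifying that the exponent $\tfrac{d-1}{1-p}$ and the kernel $(z-t)^{r-1}$ emerge with the correct normalization from this gluing is the technical heart of the argument; once in place, the two-weight Hardy criterion concludes the proof.
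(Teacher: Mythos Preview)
Your overall architecture --- reduce to Stepanov's two-weight criterion for the Riemann--Liouville operator, build test functions depending only on $z$ for the lower bound, and use a dyadic chain $z_{k+1}+\varphi(z_{k+1})=z_k$ for the local part --- matches the paper's proof (which follows Maz'ya--Poborchi\u{\i}). The lower bound sketch is essentially what the paper does.

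The gap is in the sufficiency half, precisely at the point you yourself flag. You propose projectors $P_z$ on the $d$-dimensional John domains $\Omega_z$ and then hope to ``differentiate the coefficients of $P_zf$ in $z$ and integrate back $r$ times''. But a generic projector on $\Omega_z$ has no reason to depend smoothly on $z$, and your proposed remedy --- a single telescoping sum over a dyadic chain --- yields only a discrete Hardy operator with kernel $\equiv 1$, not the kernel $(t-z)^{r-1}$. The resulting two-weight condition would not be $A_{0},A_{1}$ as stated. (Also, your $L_\infty$ Bramble--Hilbert estimate needs $r>d/p$, which the theorem does not assume.)

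What the paper actually does is different and is the key idea you are missing. First $\varphi$ is smoothed so that $\varphi\in C^r$ with $|\varphi^{(k)}(z)|\lesssim\varphi(z)^{1-k}$. Then, instead of projecting on $\Omega_z$, one takes \emph{$(d-1)$-dimensional slice averages}
\[
u_\mu(z)=\varphi(z)^{1-d}\int_{|y|<\varphi(z)}K\!\left(\frac{y}{\varphi(z)}\right)\frac{\partial^{|\mu|}u}{\partial y^\mu}(y,z)\,dy,
\qquad
Q(x)=\sum_{|\mu|\le r-1}u_\mu(z)\,\frac{y^\mu}{\mu!},
\]
with $K\in C_0^\infty(B^{d-1})$ having vanishing moments of orders $1,\dots,r-1$. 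This specific construction makes each $u_\mu$ genuinely $C^{r}$ in $z$, with the pointwise bound $|u_\mu^{(r)}(z)|\lesssim\varphi(z)^{-|\mu|-(d-1)/p}\bigl(\int_{|y|<\varphi(z)}|\nabla^r u|^p\,dy\bigr)^{1/p}$ (Lemma~3.1 of Maz'ya--Poborchi\u{\i}; this is where the vanishing moments are used). Since $u|_{B_R(\eta_{\tau_+})}=0$ forces $u_\mu$ to vanish near $\tau_+$, the honest Taylor representation $u_\mu(z)=\tfrac{(-1)^r}{(r-1)!}\int_z^{\tau_+}(t-z)^{r-1}u_\mu^{(r)}(t)\,dt$ holds, and Stepanov's Theorem~A applies directly to bound $\|Q\|_{L_{q,v}}$ by $A_{[\tau_-,\tau_+]}$. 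The dyadic chain you describe is used by the paper only for the \emph{remainder} $u-Q$, where no Riemann--Liouville kernel is needed: on each $\Omega_{(k)}$ one has the local Sobolev estimate $\|u-Q\|_{L_q(\Omega_{(k)})}\lesssim\varphi(z_k)^{\delta}\|\nabla^r u\|_{L_p(\Omega_{(k)})}$, and summation over $k$ (finite overlap, $p\le q$) shows this piece is controlled by $A_{1}$.

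In short: your dyadic chain handles the local/remainder part, but the Riemann--Liouville representation producing the kernel $(t-z)^{r-1}$ must come from a \emph{smooth} slice-average construction, not from telescoping discrete projectors. That construction is the technical heart you need to supply.
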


Let $(X, \, \|\cdot\|_X)$ be a normed space, let $X^*$ be its
dual, and let ${\cal L}_n(X)$, $n\in \Z_+$, be the family of
subspaces of $X$ of dimension at most $n$.
Denote by $L(X, \, Y)$ the space of continuous
linear operators from $X$ into a normed
space $Y$. Also, by ${\rm rk}\, A$ denote the dimension of the
image of an operator $A\in L(X, \, Y)$, and by $\| A\|
_{X\rightarrow Y}$, its norm.

By the Kolmogorov $n$-width of a set
$M\subset X$ in the space $X$, we mean the quantity
$$d_n(M, \, X)=\inff _{L\in {\cal L}_n(X)} \sup_{x\in M}\inff_{y\in
L}\|x-y\|_X,$$ by the linear $n$-width, the quantity
$$
\lambda_n(M, \, X) =\inff_{A\in L(X, \, X), \, {\rm rk} A\le
n}\,\sup _{x\in M}\| x-Ax\| _X,
$$
and by the Gelfand $n$-width, the quantity
$$
d^n(M, \, X)=\inff _{x_1^*, \, \dots, \, x_n^*\in X^*} \sup
\{\|x\|:\; x\in M, \, x^*_j(x)=0, \; 1\le j\le n\}=
$$
$$
=\inff _{A\in L(X, \, \R^n)}\sup \{\|x\|:\; x\in M\cap \ker A\}.
$$
In \cite{pietsch1} the definition of strict $s$-numbers of a
linear continuous operator was given. In particular, Kolmogorov
numbers of an operator $A:X\rightarrow Y$ coincide with Kolmogorov
widths $d_n(A(B_X), \, Y)$ (here $B_X$ is the unit ball in the
space $X$); if the operator is compact, then its approximation
numbers coincide with linear widths $\lambda_n(A(B_X), \, Y)$ (see
the paper of Heinrich \cite{heinr}). If $X$ and $Y$ are both
uniformly convex and uniformly smooth and $A:X\rightarrow Y$ is a
bounded linear map with trivial kernel and range dense in $Y$,
then Gelfand numbers of $A$ are equal to $d^n(A(B_X), \, Y)$ (see
the paper of Edmunds and Lang \cite{edm_lang1}).

In the 1960–1970s problems concerning the values of the widths of
function classes in $L_q$ and of finite-dimensional balls $B_p^n$
in $l_q^n$ were intensively studied (see \cite{bibl6,
tikh_babaj, bib_ismag, bib_kashin, bib_majorov, bib_makovoz,
bibl9, bibl10, bibl11, bibl12, bibl13, kashin1, kulanin} and also
\cite{tikh_nvtp}, \cite{itogi_nt} and \cite{kniga_pinkusa}). Here $l_q^n$
$(1\le q\le \infty)$ is the space $\R^n$ with the norm
$$
\|(x_1, \, \dots , \, x_n)\| _q\equiv\|(x_1, \, \dots , \, x_n)\|
_{l_q^n}= \left\{
\begin{array}{l}(| x_1 | ^q+\dots+ | x_n | ^q)^{1/q},\text{ if
}q<\infty ,\\ \max \{| x_1 | , \, \dots, \, | x_n |\},\text{ if
}q=\infty ,\end{array}\right .
$$
$B_p^n$ is the unit ball in $l_p^n$.
For $p\ge q$, Pietsch \cite{pietsch1} and Stesin \cite{stesin} found
the precise values of $d_n(B_p^\nu, \, l_q^\nu)$ and
$\lambda_n(B_p^\nu, \, l_q^\nu)$.
For $p<q$, Kashin
\cite{bib_kashin}, Gluskin \cite{bib_gluskin} and Garnaev, Gluskin
\cite{garn_glus} determined the order values of widths of
finite-dimensional balls up to quantities depending only on $p$
and $q$.

Order estimates for widths of non-weighted Sobolev classes on an
interval were obtained by Tikhomirov, Ismagilov, Makovoz and
Kashin \cite{bibl6, tikh_babaj, bib_ismag, bib_kashin,
bib_makovoz}. For multidimensional cube, the upper estimate of
widths was first obtained by Birman and Solomyak \cite{birm}.
After publication of Kashin's result in \cite{bib_kashin},
estimates for widths of Sobolev classes on a $d$-dimensional torus
and their generalizations were obtained by Temlyakov and Galeev
\cite{bibl9, bibl10, bibl11, bibl12, bibl13}. Kashin
\cite{kashin1} (for $d=1$), Kulanin \cite{kulanin} and Galeev
\cite{gal1, gal2} have obtained estimates for widths of Sobolev
classes with dominating mixed smoothness in the case of
``small-order smoothness''. Here the upper estimate was not
precise for $d>1$ (involving a logarithmic factor). Order
estimates for widths of $W^r_p([0, \, 1]^d)$ in the case of
``small-order smoothness'' were obtained by DeVore, Sharpley and
Riemenschneider \cite{de_vore_sharpley}. The result of Vybiral
\cite{vybiral} on order estimates for widths of Besov classes on a
cube is also worth mentioning.

Besov in \cite{besov_peak_width} proved the result on the
coincidence of orders of widths
$$
d_n(W^r_p(K_\sigma), \, L_q(K_\sigma))\underset{p,q,r,d,\sigma}
{\asymp} d_n(W^r_p([0, \, 1]^d), \, L_q([0, \, 1]^d));
$$
here $$K_\sigma=\{(x_1, \, \dots, \, x_{d-1}, \, x_d): \; |(x_1,
\, \dots, \, x_{d-1})|^{1/\sigma}<x_d<1\},$$ $\sigma>1$,
$r-[\sigma(d-1)+1]\left(\frac 1p-\frac 1q\right)_+>0$ and some
conditions on the parameters $p$, $q$, $r$, $d$ hold (see Theorem
\ref{sob_dn} below). For $r=1$, $p=q$ and more general ridged
domains, estimates of approximation numbers were obtained by W.D.
Evans and D.J. Harris \cite{evans_har}.

The problem of estimating Kolmogorov widths of weighted Sobolev
classes and other weighted functional classes and the problem of
estimating approximation numbers of the corresponding embedding
operators was also extensively examined. The case $d=1$ was
considered by Lifshits and Linde, Edmunds, Lang, Lomakina and
Stepanov and other authors \cite{lifs_linde, edm_lang, lang_j_at1,
lom_step_hardy, step_lom}; the authors in question have found
different sufficient conditions under which orders of
approximation numbers of the embedding operator are the same as in
the non-weighted case for a finite interval. In \cite{vas_alg_an}
the weights of a special form were considered; these weights had a
singularity at a point, which affected the asymptotics of
Kolmogorov and linear widths.

An upper estimate of Kolmogorov widths of Sobolev classes on a
cube in a weighted $L_p$-space was first obtained by Birman and
Solomyak \cite{birm} (for $q>\max \{p, \, 2\}$, the orders of this
bound are not sharp). In \cite{el_kolli}, El Kolli had found the
orders of the quantities $d_n(W^r_{p,g}(\Omega), \,
L_{q,v}(\Omega))$, where $\Omega$ is a bounded domain with smooth
boundary, $p=q$, and weight functions $g$ and $v$ were equal to a
power of the distance to the boundary of $\Omega$; by using Banach
space interpolation, Triebel \cite{triebel} extended the upper
bounds to the widths $d_n(W^r_{p,g}(\Omega), \, L_{q,v}(\Omega))$
for $p\le q$. For intersections of some weighted Sobolev classes
on a cube with weights that are powers of the distance from the
boundary, order estimates of widths were obtained by Boykov
\cite{boy_1, boy_2}. In \cite{tr_jat} Triebel obtained estimates
of approximation numbers for weighted Sobolev classes with weights
that have a singularity at a point; this result was generalized in
\cite{vas_sing}. For general weights, the Kolmogorov’s and
approximation numbers of an embedding operator of Sobolev classes
in $L_p$ were estimated by Lizorkin, Otelbaev, Aitenova and
Kusainova \cite{liz_otel1, otelbaev, ait_kus1}.

It is worth noting recent results on estimates of approximation
and entropy numbers of embedding operators of Besov and
Triebel--Lizorkin classes (see, e.g., \cite{haroske1, haroske3,
vas_bes}).

Suppose that for any $0<z\le \frac 12$
\begin{align}
\label{g0v0} g_0(z)=z^{-\beta_g} |\log z|^{-\alpha_g}\rho_g(|\log
z|), \quad v_0(z)=z^{-\beta_v}|\log z|^{-\alpha_v}\rho_v(|\log
z|),
\end{align}
\begin{align}
\label{phi_def} \varphi(z)=z^\sigma |\log z|^{\theta}\omega(|\log
z|),
\end{align}
where $\rho_g$, $\rho_v$, $\omega$ are absolutely continuous functions
such that
\begin{align}
\label{limtrho} \lim \limits_{t\to+\infty}
\frac{t\rho_g'(t)}{\rho_g(t)} = \lim \limits_{t\to+\infty}
\frac{t\rho_v'(t)}{\rho_v(t)} = \lim \limits_{t\to+\infty}
\frac{t\omega'(t)}{\omega(t)}=0,
\end{align}
\begin{align}
\label{sigma_prop} \sigma>1, \quad r+(\sigma(d-1)+1)\left(\frac
1q-\frac 1p\right)=\beta_g+\beta_v, \quad \sigma(d-1)+1 -\beta_v
q>0,
\end{align}
\begin{align}
\label{alpha_prop}
\alpha:=\alpha_g+\alpha_v+\theta(d-1)\left(\frac 1p-\frac
1q\right)>0.
\end{align}
For $z>\frac 12$, we extend the functions $g_0$, $v_0$
and $\varphi$ by a constant.

We set
\begin{align}
\label{rho_def}
\rho(s):=\rho_g(s)\rho_v(s)[\omega(s)]^{(d-1)\left(\frac 1q-\frac
1p\right)},
\end{align}
$\mathfrak{Z}=(\mathfrak{Z}_1, \, g, \, v)$. Observe that $\lim
\limits _{t\to+\infty} \frac{t\rho'(t)}{\rho(t)}=0$ and the function
$t^{-\alpha}\rho(t)$ is decreasing for large $t>0$.

\label{vartheta_n_def}We set, respectively,
$\vartheta_l(M, \, X)=d_l(M, \, X)$ and $\hat q=q$,
$\vartheta_l(M, \, X)=\lambda_l(M, \, X)$ and $\hat q=\min\{q, \, p'\}$,
$\vartheta_l(M, \, X)=d^l(M, \, X)$ and $\hat q=p'$
in estimating Kolmogorov, linear and Gelfand
widths, respectively.

\begin{Trm}
\label{width_est}
\begin{enumerate}
\item Let $p=q$ or $p<q$, $\hat q\le 2$, $\alpha\ne \frac{\delta}{d}$.
Then
$$
\vartheta_n(W^r_{p,g}(\Omega), \, L_{q,v}(\Omega))
\underset{\mathfrak{Z}}{\asymp} n^{-\min \left\{\frac{\delta}{d},
\, \alpha\right\}}.
$$
\item Let $p<q$ and $\hat q>2$. We set $\theta_1=\frac{\delta}{d} +
\min \left\{ \frac 12-\frac{1}{\hat q}, \, \frac 1p-\frac
1q\right\}$, $\theta_2=\frac{\hat q\delta}{2d}$, $\theta_3=\alpha+
\min \left\{ \frac 12-\frac{1}{\hat q}, \, \frac 1p-\frac
1q\right\}$, $\theta_4 =\frac{\hat q\alpha}{2}$,
$\sigma_1=\sigma_2=0$, $\sigma_3=1$, $\sigma_4=\frac{\hat q}{2}$.
Suppose that there exists $j_*\in \{1, \, 2, \, 3, \, 4\}$ such
that $\theta_{j_*}<\min _{j\ne j_*} \theta_j$. Then
$$
\vartheta_n(W^r_{p,g}(\Omega), \, L_{q,v}(\Omega))
\underset{\mathfrak{Z}} {\asymp} n^{-\theta_{j_*}}
\rho(n^{\sigma_{j_*}}).
$$
\end{enumerate}
\end{Trm}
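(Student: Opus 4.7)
The plan is to reduce this weighted, peak-domain problem to a discrete block approximation problem by exploiting the self-similar structure near the peak. I introduce a partition of $(0,1/2]$ by a sequence $\tau_0>\tau_1>\dots$ with $\tau_j-\tau_{j+1}\asymp\varphi(\tau_j)$; in view of \eqref{sigma_prop}--\eqref{alpha_prop} this yields $j\asymp\tau_j^{1-\sigma}$ up to slowly varying factors, so $j$ effectively parametrizes $|\log\tau_j|$. Write $\Omega^j=\Omega_{[\tau_{j+1},\tau_j]}$ and $\Omega_{\mathrm{bulk}}=\Omega_{[\tau_0,1/2]}$. Applied to each peak piece, Theorem \ref{emb_trm_peak}, together with the balance identity in \eqref{sigma_prop}, forces the principal power of $\tau_j$ in $A_{0,[\tau_{j+1},\tau_j]}$ and $A_{1,[\tau_{j+1},\tau_j]}$ to cancel, leaving an embedding norm $\asymp\rho(j)$ with $\rho$ as in \eqref{rho_def}. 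Each $\Omega^j$ is moreover a finite union of John subdomains of diameter $\asymp\varphi(\tau_j)$ on which, by \eqref{go} and \eqref{g0v0}--\eqref{limtrho}, the weights are constant up to bounded factors; after rescaling, the local weighted widths reduce to the unweighted widths of $W^r_p\hookrightarrow L_q$ on a fixed template, which are in turn controlled by $\vartheta_{n_j}(B_p^{N_j},l_q^{N_j})$ with $N_j$ of controlled size.

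For the upper bound, I use a smooth partition of unity subordinate to $\{\Omega^j\}\cup\{\Omega_{\mathrm{bulk}}\}$ and the direct-sum lemma for widths in the exponent $\hat q$, expecting an inequality of the form
\[
\vartheta_n(W^r_{p,g}(\Omega),L_{q,v}(\Omega))\le C\inf\Bigl(\vartheta_{n_{\mathrm{bulk}}}(W^r_p(\Omega_{\mathrm{bulk}}),L_q(\Omega_{\mathrm{bulk}}))+\bigl\|\rho(j)\,\vartheta_{n_j}(B_p^{N_j},l_q^{N_j})\bigr\|_{\ell^{\hat q}(j)}\Bigr),
\]
the infimum taken over allocations $n_{\mathrm{bulk}}+\sum_j n_j\le n$. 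The bulk term is controlled by the classical Besov--DeVore--Sharpley--Riemenschneider--Vybiral estimates \cite{besov_peak_width,de_vore_sharpley,vybiral}, while the peak sum is estimated via Pietsch--Stesin (for $\hat q\le 2$) or Kashin--Gluskin--Garnaev--Gluskin (for $\hat q>2$), the slow variation of $\rho$ allowing one to pull $\rho(n)$ outside in the final rate.

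For the lower bound, I build two disjoint extremal families. On $\Omega_{\mathrm{bulk}}$ I import the classical extremal system yielding the $n^{-\delta/d}$ lower bound for $W^r_p\hookrightarrow L_q$. On each $\Omega^j$ I place a fixed number of disjointly supported bumps $h_{j,k}$, normalized in $W^r_{p,g}(\Omega^j)$ and each contributing $\asymp\rho(j)$ to $L_{q,v}(\Omega^j)$; by construction their span contains, up to constants, a block-diagonal copy of $\bigoplus_j B_p^{N_j}$ weighted by $\rho(j)$. The matching finite-dimensional lower bounds of Pietsch--Stesin or Kashin--Gluskin--Garnaev--Gluskin, combined with a standard lower-bound lemma for widths of direct sums, transfer to the peak problem and reproduce the upper-bound rate after solving the same optimization as in Step~1.

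The main obstacle is the regime $p<q$, $\hat q>2$ of case (ii): the four exponents $\theta_j$ arise from qualitatively different allocation strategies (many tiny blocks versus few saturating blocks, Pietsch--Stesin regime versus Kashin regime inside each $B_p^{N_j}$), and verifying that the infimum is genuinely attained at the $j_*$-th strategy is what the strict separation hypothesis $\theta_{j_*}<\min_{j\ne j_*}\theta_j$ is designed for. A secondary but non-negligible technicality is making the per-piece rescaling uniform in $j$: one must verify that the John-condition constants for the subdivision of $\Omega^j$ are controlled independently of $j$, and that the bounded-distortion property \eqref{go} propagates to the rescaled weights, so that Theorem~\ref{emb_trm_peak} together with the classical $W^r_p\hookrightarrow L_q$ widths on a fixed template gives matching upper and lower constants throughout the peak.
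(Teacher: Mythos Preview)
Your partition is at the wrong scale, and this breaks the claimed cancellation. With $\tau_j-\tau_{j+1}\asymp\varphi(\tau_j)$ you correctly get $j\asymp\tau_j^{1-\sigma}$, but then $|\log\tau_j|\asymp\log j$, not $j$; the assertion that ``$j$ effectively parametrizes $|\log\tau_j|$'' is simply false. More seriously, on such a piece (which, by Lemma~\ref{un_fca}, is a single John domain of diameter $\asymp\varphi(\tau_j)$) the local embedding norm is $g_0(\tau_j)v_0(\tau_j)\,\varphi(\tau_j)^{\delta}$, and a direct computation from \eqref{sigma_prop} shows that the surviving power of $\tau_j$ is $(\sigma-1)(r+\tfrac1q-\tfrac1p)>0$: the balance identity does \emph{not} kill it at this scale. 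In your indexing the block norm is $\asymp j^{-(r+1/q-1/p)}$ times a function slowly varying in $\log j$, not $\rho(j)$; and the tail $\Omega_{[0,\tau_{j_*}]}$ contributes $\asymp(\log j_*)^{-\alpha}\rho(\log j_*)$ by \eqref{at}, so reaching the target $n^{-\alpha}$ would require $j_*\asymp e^{n}$ blocks.

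The cancellation you want occurs for the \emph{dyadic} partition $\tau_j=2^{-j}$: there $\tau_+-\tau_-\asymp\tau_+$ and \eqref{at1} gives $A_{[2^{-j},2^{-j+1}]}\asymp j^{-\alpha}\rho(j)$. But then each layer $\Omega_{[2^{-j},2^{-j+1}]}$ has aspect ratio $\asymp 2^{j(\sigma-1)}$ and is far from a single John domain, so ``rescaling to a fixed template'' is unavailable. The paper handles this with a genuine multi-scale refinement: each dyadic layer is cut into $2^l$ sub-intervals (Step~1, estimate \eqref{a1777}), reaching John pieces only at $l=l_j$ of \eqref{2jlj1_phi_2j}, after which Lemma~\ref{m_part} takes over; the dyadic layers are then grouped into super-dyadic blocks $G_t=\bigcup_{2^t<j\le 2^{t+1}}\Omega_{[2^{-j},2^{-j+1}]}$ (Step~2) before dimensions are allocated. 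Even with the correct partition, the $\hat q>2$ upper bound cannot be obtained by a blockwise allocation summed in $\ell^{\hat q}$: the rates $\theta_2$ and $\theta_4$ come from Kashin-type approximation applied across the whole hierarchy $(t,m)$ at once (Step~5 and the reference to \cite{vas_sing}), and this gain is lost if the blocks are treated independently. Your lower-bound scheme (disjoint bumps plus Lemma~\ref{low}) is exactly what the paper does and works once the bumps sit on the dyadic layers, where their $L_{q,v}$-norm is indeed $\asymp j^{-\alpha}\rho(j)$.
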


\section{Preliminary results}
Let $t_0<t_1$, $r>0$, and let $u$, $w:[t_0, \, t_1]\rightarrow
\R_+$ be measurable functions. Set
$$
\tilde I_{r,u,w,t_1}f(t)=w(t)\int \limits
_t^{t_1}(t-s)^{r-1}u(s)f(s)\, ds.
$$

The criterion of continuity for the operator $\tilde
I_{r,u,w,t_1}:L_p[t_0, \, t_1]\rightarrow L_q[t_0, \, t_1]$ is
proved by V.D. Stepanov \cite{stepanov2}. Let us formulate this
result for the case $p\le q$.
\begin{trma}
\label{step_trm} Let $r\ge 1$, $1<p\le q<\infty$. Then $\| \tilde
I_{r,u,w,t_1}\| _{L_p\rightarrow L_q}\underset{p,q,r}{\asymp}
B_0+B_1$, where
$$
B_0=\sup _{t\in (t_0, \, t_1)}\left(\int \limits _{t_0}^t
(t-x)^{q(r-1)} w^q(x)\, dx\right)^{1/q}\left(\int \limits _t^{t_1}
u^{p'}(x)\, dx\right)^{1/p'},
$$
$$
B_1=\sup _{t\in (t_0, \, t_1)}\left(\int \limits _{t_0}^t w^q(x)\,
dx \right)^{1/q}\left(\int \limits _t^{t_1}
(x-t)^{p'(r-1)}u^{p'}(x)\, dx\right) ^{1/p'}.
$$
\end{trma}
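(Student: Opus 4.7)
The plan is to reduce the multidimensional embedding to the one‑dimensional Hardy‑type operator $\tilde I_{r,u,w,\tau_+}$ of Theorem A for the effective weights
\[
 u(z)=g_0(z)\varphi(z)^{-(d-1)/p},\qquad w(z)=v_0(z)\varphi(z)^{(d-1)/q},
\]
so that $u^{p'}=g_0^{p'}\varphi^{(d-1)/(1-p)}$ and $w^q=v_0^q\varphi^{d-1}$; the two suprema $A_{0,[\tau_-,\tau_+]}$ and $A_{1,[\tau_-,\tau_+]}$ are then precisely Stepanov's $B_1$ and $B_0$. The whole task is therefore to establish $\|I\|\asymp\|\tilde I_{r,u,w,\tau_+}\|_{L_p\to L_q}$, after which Theorem A delivers the desired order equivalence.

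\emph{Sufficiency.} For $f\in W^r_{p,g}(\Omega_{[\tau_-,\tau_+]},\Gamma_0^R)$ and $z\in(\tau_-,\tau_+)$, I would pick a polynomial $P_z$ of degree $<r$ of near‑best $L_p$ approximation to $f$ on the slab $\Omega_{[z-c\varphi(z),\,z+c\varphi(z)]}\cap\Omega_{[\tau_-,\tau_+]}$ with $c$ small, setting $P_z\equiv 0$ once the slab lies inside $B_R(\eta_{\tau_+})$ (allowed because $f$ vanishes there). Splitting $f=(f-P_z)+P_z$ on $\Omega_z$, the fluctuation is controlled by the higher‑order Poincaré inequality on the slab, itself a John domain with constants depending only on $\mathfrak Z_1$:
\[
 \|f-P_z\|_{L_q(\Omega_z)}\lesssim\varphi(z)^{r+(d-1)(1/q-1/p)}g_0(z)\bigl\|\tfrac{\nabla^rf}{g}\bigr\|_{L_p(\text{slab})}.
\]
Multiplying by $v_0(z)$, raising to $q$ and integrating in $z$ (using (\ref{go}) to freeze $g_0,v_0$ inside each slab) the resulting constant is dominated by $A_{[\tau_-,\tau_+]}$, since the local quantity is majorized by the values of the $A_j$ suprema at $t=z$. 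The radial part $P_z$ is a vector of at most $\dim\mathcal P_{r-1}$ one‑variable functions; expressing their $r$‑th weak derivative via Taylor's formula along John curves as $L_p$‑averages of $\nabla^rf$ on neighbouring slabs and applying Theorem A with weights $u,w$ yields the bound $A_{[\tau_-,\tau_+]}\|\nabla^rf/g\|_p$.

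\emph{Necessity.} For fixed $t\in(\tau_-,\tau_+)$ and $h\in L_p[t,\tau_+]$ with $\|h\|_p\le 1$ chosen extremal in the inner $L_{p'}$ factor of $A_0$ (respectively $A_1$), I would set
\[
 f_t(y,z):=\chi\!\bigl(y/\varphi(z)\bigr)\int_z^{\tau_+}\frac{(s-z)^{r-1}}{(r-1)!}h(s)\,ds,
\]
where $\chi$ is a smooth cutoff supported inside the $a/2$‑neighbourhood of $0$ in the rescaled slice $\Omega_z/\varphi(z)$, and $h$ is extended by zero near $\tau_+$ so that $f_t$ vanishes on $B_R(\eta_{\tau_+})$. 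A direct computation, using that the $r$‑th $z$‑derivative of $f_t$ is $\pm h$ up to lower‑order tangential terms and that $|\Omega_z|\asymp\varphi(z)^{d-1}$, gives $\|\nabla^rf_t/g\|_{L_p(\Omega)}\lesssim 1$; the $L_{q,v}$ norm of $f_t$ on $\Omega_{[\tau_-,t]}$ reproduces the first factor of $A_0$ (resp.\ $A_1$). Taking supremum over $t$ and $h$ yields $\|I\|\gtrsim A_{[\tau_-,\tau_+]}$.

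The main obstacle will be the higher‑order Poincaré estimate for $f-P_z$ at exactly the scale $\varphi(z)^{r+(d-1)(1/q-1/p)}$: this requires verifying that the $z$‑slab is a John domain with constants controlled only by $\mathfrak Z_1$ (which uses (\ref{rrr}), (\ref{opr2k}), (\ref{opr2phi}) to compare $\varphi(z)$ with the John radii of the slab) and then invoking Reshetnyak's representation to get the correct scaling power of $\varphi(z)$. Once this is in place, matching the slab‑wise local estimates with the endpoints of the $A_j$ suprema is routine bookkeeping with the doubling condition (\ref{go}), and the identification of $A_j$ with $B_{1-j}$ of Theorem A completes the proof.
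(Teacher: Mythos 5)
Your proposal does not address the statement it is supposed to prove. Theorem \ref{step_trm} is Stepanov's purely one-dimensional criterion: for the operator $\tilde I_{r,u,w,t_1}f(t)=w(t)\int_t^{t_1}(t-s)^{r-1}u(s)f(s)\,ds$ acting from the unweighted $L_p[t_0,t_1]$ to $L_q[t_0,t_1]$, the operator norm is equivalent (up to constants depending on $p,q,r$) to $B_0+B_1$. What you have written instead is a sketch of Theorem \ref{emb_trm_peak}, the embedding theorem for $W^r_{p,g}$ on the peak domain, and in it you explicitly invoke ``Theorem A'' as a black box that ``delivers the desired order equivalence.'' That is, you assume the very statement under review and use it to prove a different theorem; neither the upper bound $\|\tilde I_{r,u,w,t_1}\|_{L_p\to L_q}\lesssim B_0+B_1$ nor the lower bound $\gtrsim \max\{B_0,B_1\}$ is argued anywhere in your text. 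Everything about slabs, John constants, Reshetnyak's representation and the weights $u=g_0\varphi^{-(d-1)/p}$, $w=v_0\varphi^{(d-1)/q}$ is material for Theorem \ref{emb_trm_peak}, not for Theorem \ref{step_trm}.

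For comparison: the paper does not prove Theorem \ref{step_trm} either; it is imported verbatim from V.D. Stepanov's work on two-weight estimates for Riemann--Liouville integrals. A genuine proof would have to work with the one-dimensional kernel $(s-t)^{r-1}$ directly: the necessity of $B_0$ and $B_1$ follows by testing $\tilde I_{r,u,w,t_1}$ on functions of the form $f=u^{p'-1}\chi_{(t,t_1)}$ and $f=(\cdot-t)^{p'(r-1)/p}u^{p'-1}\chi_{(t,t_1)}$ (which isolate the two ways of attaching the factor $(s-t)^{r-1}$ to the $u$-integral or the $w$-integral), while the sufficiency requires a reduction to the Hardy case $r=1$ by splitting the kernel according to whether $s-t$ is comparable to the length of a suitable block, together with a discretization argument. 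None of this appears in your write-up, so as a proof of Theorem \ref{step_trm} the proposal is empty.
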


Denote by ${\rm mes}\, \Omega$ the Lebesgue measure of a set
$\Omega\subset \R^d$.

Reshetnyak \cite{resh1, resh2} constructed the integral
representation for smooth functions defined on a John domain
$\Omega$ in terms of their derivatives of order $r$. This together
with the result of Sobolev and Adams \cite{sobol38, adams, adams1}
implies the following theorem.
\begin{trma}
\label{resh_teor} Let $\Omega\in {\bf FC}(a)$, $r\in \N$,
$1<p<q<\infty$, $\frac rd+\frac 1q-\frac 1p\ge 0$. Then for any
function $f\in W^r_p(\Omega)$ there exists a polynomial $P_f$ of
degree not exceeding $r-1$ such that
$$
\|f-P_f\|_{L_q(\Omega)}\underset{p,q,r,d,a}{\lesssim}({\rm mes}\,
\Omega)^{\frac{\delta}{d}} \|\nabla^r f\| _{L_p(\Omega)};
$$
in addition,
$$
\|f\|_{L_q(\Omega)}\underset{p,q,r,d,a}{\lesssim} ({\rm mes}\,
\Omega)^{\frac{\delta}{d}} \|\nabla^r f\| _{L_p(\Omega)} +({\rm
mes}\, \Omega)^{\frac 1q-\frac 1p}\|f\|_{L_p(\Omega)}.
$$
\end{trma}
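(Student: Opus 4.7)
The plan is to combine Reshetnyak's integral representation on a John domain with the classical Sobolev–Adams fractional integration theorem. First, for $\Omega \in \mathbf{FC}(a)$, Reshetnyak \cite{resh1, resh2} fixes a base ball $B_* \subset \Omega$ of radius comparable to the flexibility scale and a smooth cutoff $\omega \ge 0$ supported in $B_*$ with $\int \omega = 1$; averaging the $(r-1)$st order Taylor polynomial $T^{r-1}_y f(x)$ of $f$ at $y$ against $\omega(y)\,dy$ yields a polynomial $P_f$ of degree at most $r-1$. The John condition supplies, for each $x \in \Omega$, a rectifiable carrot $\gamma_x$ from $x$ to $x_*$ with $B_{at}(\gamma_x(t)) \subset \Omega$, and integrating Taylor's remainder along these carrots produces the representation
$$f(x) - P_f(x) = \sum_{|\overline{\beta}|=r} \int_\Omega K_{\overline{\beta}}(x,y)\, D^{\overline{\beta}} f(y)\, dy$$
with pointwise kernel bounds $|K_{\overline{\beta}}(x,y)| \underset{a,d,r}{\lesssim} |x - y|^{r-d}$ throughout $\Omega \times \Omega$.

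Next I would invoke the Hardy–Littlewood–Sobolev/Sobolev–Adams theorem \cite{sobol38, adams, adams1} for the truncated Riesz potential $I_r g(x) = \int_\Omega |x-y|^{r-d}|g(y)|\,dy$: this operator maps $L_{p_0}(\Omega)$ continuously into $L_q(\Omega)$ whenever $1 < p_0 \le q < \infty$ with $\frac{1}{p_0} - \frac{1}{q} = \frac{r}{d}$. Under the hypothesis $\delta = r + \frac{d}{q} - \frac{d}{p} \ge 0$, I pick $p_0 \le p$ satisfying $\frac{1}{p_0} = \frac{1}{q} + \frac{r}{d}$ and apply Hölder to pass from $L_p$ to $L_{p_0}$ at the cost of $(\mathrm{mes}\,\Omega)^{1/p_0 - 1/p} = (\mathrm{mes}\,\Omega)^{\delta/d}$. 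Substituting $g = D^{\overline{\beta}} f$ and applying the kernel bound yields the first asserted inequality.

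For the additive bound, I split $\|f\|_{L_q(\Omega)} \le \|f - P_f\|_{L_q(\Omega)} + \|P_f\|_{L_q(\Omega)}$ and control the first summand by the inequality just proved. The averaged-Taylor construction writes the coefficients of $P_f$ as linear functionals $c_{\overline{\beta}}\int_{B_*} f\cdot \psi_{\overline{\beta}}$ for fixed smooth $\psi_{\overline{\beta}}$ compactly supported in $B_*$ (obtained by integration by parts that moves every derivative onto $\omega$), so Hölder gives $\|P_f\|_{L_\infty(B_*)} \underset{a,d,r,p}{\lesssim} (\mathrm{mes}\,B_*)^{-1/p}\|f\|_{L_p(B_*)}$. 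Equivalence of norms on the finite-dimensional space of polynomials of degree $\le r-1$, rescaled to a Euclidean ball of radius $\asymp \mathrm{diam}(\Omega)$ containing $\Omega$ (which exists by boundedness of $\Omega$), then propagates the pointwise bound into $\|P_f\|_{L_q(\Omega)} \lesssim (\mathrm{mes}\,\Omega)^{1/q - 1/p}\|f\|_{L_p(\Omega)}$.

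The technical heart of the argument is the pointwise kernel estimate $|K_{\overline{\beta}}(x,y)| \lesssim |x-y|^{r-d}$: constants collected from Taylor's remainder along $\gamma_x$ would diverge for a generic domain near irregular boundary points, and property (3) of Definition \ref{fca} (the carrot carries a shrinking but fully-inscribed ball) is exactly what forces convergence of the fractional-integral majorant uniformly in $x$. This is the step one cannot bypass when extending from Lipschitz domains to general John domains; once it is available, the remainder is a bookkeeping exercise combining Hardy–Littlewood–Sobolev with a polynomial norm equivalence.
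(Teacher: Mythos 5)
The paper does not prove this statement at all---it is quoted as a known result (Theorem~\ref{resh_teor}), with the proof attributed to Reshetnyak's integral representation on John domains combined with the Sobolev--Adams theorem---and your proposal reconstructs exactly that argument, so it follows the same route and is essentially correct. One technical caveat: the step ``apply Hardy--Littlewood--Sobolev at $p_0$ with $\frac{1}{p_0}=\frac1q+\frac rd$ and then H\"older'' is unavailable when $\frac1q+\frac rd\ge 1$ (then $p_0\le 1$, which happens for instance whenever $r\ge d$); in that subcase one should instead bound $\int_\Omega|x-y|^{r-d}|g(y)|\,dy$ by Young's inequality, using that the $L_s$-norm of $|\cdot|^{r-d}$ over a ball of radius ${\rm diam}\,\Omega$, with $1+\frac1q=\frac1s+\frac1p$, is finite precisely when $\delta>0$ and is $\underset{p,q,r,d,a}{\asymp}({\rm mes}\,\Omega)^{\delta/d}$, which yields the same conclusion (HLS is needed only at the endpoint $\delta=0$, where necessarily $\frac1q+\frac rd=\frac1p<1$).
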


Kashin and Gluskin \cite{bib_kashin, bib_gluskin} obtained order
estimates for $d_n(B_p^\nu, \, l_q^\nu)$, $d^n(B_p^\nu, \,
l_q^\nu)$ and $\lambda_n(B_p^\nu, \, l_q^\nu)$.
\begin{trma} Let $1<p< q<\infty$. Then
\begin{align}
\label{gluskin} d_n(B_p^\nu, \, l_q^\nu)\underset{q,p}{\asymp}
\Phi(n, \, \nu, \, p, \, q),
\end{align}
\begin{align}
\label{gluskin_lin} \lambda _n(B_p^\nu, \,
l_q^\nu)\underset{q,p}{\asymp} \Psi(n, \, \nu, \, p, \, q),
\end{align}
\begin{align}
\label{gluskin_gelf} d^n(B_p^\nu, \,
l_q^\nu)\underset{q,p}{\asymp} \Phi(n, \, \nu, \, q', \, p'),
\end{align}
with
$$\Phi(n, \, \nu, \, p, \, q)=\left\{
\begin{array}{l} \min\bigl\{ 1, \, \left(\nu^{1/q}n^{-1/2}\right)
^{\left(\frac1p-\frac1q\right)/\left(\frac12-\frac1q\right)}\bigr\},
\;
2\le p< q< \infty, \\
\max\bigl\{\nu ^{\frac 1q-\frac 1p}, \, \min \bigl(1, \, \nu
^{\frac 1q}n^{-\frac 12}
\bigr)\bigl(1-\frac{n}{\nu}\bigr)^{1/2}\bigr\}, \; 1< p< 2< q< \infty , \\
\max\bigl\{\nu ^{\frac1q-\frac1p}, \,
\bigl(1-\frac{n}{\nu}\bigr)^{\left(\frac1q-
\frac1p\right)/\left(1-\frac{2}{p}\right)}\bigl\}, \; 1< p< q\le
2,\end{array}\right.$$
$$\Psi(n, \, \nu, \, p, \, q)=\left\{ \begin{array}{l} \Phi(n, \, \nu, \, p, \,
q),\quad
\text{ if }\quad q\le p', \\
\Phi(n, \, \nu, \, q', \, p'),\quad\text{ if }\quad
p'<q.\end{array}\right.
$$
\end{trma}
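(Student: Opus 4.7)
The plan is to reduce the embedding question to a one-dimensional weighted Hardy-type inequality on $(\tau_-, \, \tau_+)$ and then invoke Stepanov's criterion (Theorem \ref{step_trm}). With the choices $u(z) = g_0(z)\varphi(z)^{-(d-1)/p}$ and $w(z) = v_0(z)\varphi(z)^{(d-1)/q}$, the quantities $A_{0,[\tau_-, \, \tau_+]}$ and $A_{1,[\tau_-, \, \tau_+]}$ are exactly the constants $B_0$ and $B_1$ of Theorem \ref{step_trm} for the Volterra operator $\tilde I_{r,u,w,\tau_+}$. So the task is to identify the embedding norm with $\|\tilde I_{r,u,w,\tau_+}\|_{L_p \to L_q}$ up to constants depending on $\mathfrak{Z}_1$ and $\lambda$.

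For the upper bound, I would pick a chain of heights $\tau_+ = z_0 > z_1 > \cdots > z_N$ with $z_N \le \tau_-$ and $z_k - z_{k+1} \underset{a,d}{\asymp} \varphi(z_k)$, so that the $d$-dimensional John pieces $\Omega_{z_k}$ overlap pairwise and cover $\Omega_{[\tau_-,\tau_+]}$ with bounded multiplicity. On each $\Omega_{z_k}$, Theorem \ref{resh_teor} furnishes a polynomial $P_k$ of degree $<r$ with error bounded by a constant times $\varphi(z_k)^\delta \|\nabla^r f\|_{L_p(\Omega_{z_k})}$. The vanishing of $f$ on $B_R(\eta_{\tau_+})$ (with $R<\underline R_{\tau_+}$, so the ball lies inside $\Omega_{\tau_+}$) lets me anchor Reshetnyak's construction at this ball at the top, forcing $P_0 \equiv 0$. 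Controlling $P_k - P_{k+1}$ on overlaps by Markov's inequality and Taylor-expanding the coefficients of $P_k$ in the axial variable along the chain produces a Volterra-type representation of $f$ with kernel $(z-s)^{r-1}$. Using \eqref{go} to absorb the oscillation of $g_0, v_0$ across each $\Omega_{z_k}$, and the slice geometry $\mathrm{mes}(\Omega_{z_k}) \underset{a,d}{\asymp} \varphi(z_k)^d$ (so that $(d-1)$-dim cross-sections at height $z$ have measure $\asymp \varphi(z)^{d-1}$), Fubini yields
$$
\|f\|_{L_{q,v}(\Omega_{[\tau_-, \tau_+]})} \lesssim \|\tilde I_{r,u,w,\tau_+} F\|_{L_q(\tau_-, \tau_+)}
$$
for some $F \ge 0$ with $\|F\|_{L_p(\tau_-, \tau_+)} \lesssim \|\nabla^r f/g\|_{L_p(\Omega)} \le 1$. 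Theorem \ref{step_trm} then gives $\|I\| \lesssim A_{[\tau_-,\tau_+]}$.

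For the lower bound I would construct two test functions mirroring the Stepanov extremizers. To capture $A_{1,[\tau_-,\tau_+]}$, for each $t \in (\tau_-, \tau_+)$ take $f(y,z) = \eta(y/\varphi(z))\chi(z)$ with $\eta$ a smooth bump on the reference cross-section and $\chi$ the $(r-1)$-fold primitive of a normalized function supported just above $t$, so that $\chi(z) \asymp (t-z)^{r-1}$ for $z<t$; this exactly pairs the $L_{q,v}$ weight $(t-z)^{q(r-1)}\varphi^{d-1}v_0^q$ against the $L_p$-dual weight $g_0^{p'}\varphi^{(d-1)/(1-p)}$, and after optimizing the profile of the bump against the conjugate weight, yields $\|I\| \gtrsim A_{1,[\tau_-,\tau_+]}$. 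To capture $A_{0,[\tau_-,\tau_+]}$ use the dual extremizer: place $\nabla^r f$ at heights $z\in(t,\tau_+)$ against the weight $(z-t)^{r-1}$, so that $f$ is essentially constant in $z$ for $z<t$. Both test functions fit inside $\Omega$ by \eqref{opr2phi} and vanish on $B_R(\eta_{\tau_+})$ provided $\tau_+ - t$ is not too small, which is consistent with the restriction from \eqref{tau_mt} on the allowed range of $t$.

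The main obstacle is the upper bound: producing a single axial Volterra representation that (i) respects the vanishing on $B_R(\eta_{\tau_+})$, (ii) glues the slicewise polynomial approximations from Theorem \ref{resh_teor} across the overlapping chain $\{\Omega_{z_k}\}$ with telescoping differences controlled by Markov's inequality, and (iii) matches the $L_p \to L_q$ action of $\tilde I_{r,u,w,\tau_+}$ cleanly, so that both Stepanov constants appear with the correct exponents. The hypotheses \eqref{go} and \eqref{opr2phi} are what allow slicewise averages to be replaced by pointwise values of $g_0, v_0, \varphi$ at the reference heights $z_k$, while \eqref{rrr} makes the chain spacing consistent. Once the representation and test-function constructions are set up, Theorem \ref{step_trm} delivers $\|I\| \underset{\mathfrak{Z}_1,\lambda}{\asymp} A_{[\tau_-,\tau_+]}$, completing the proof.
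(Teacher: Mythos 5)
Your proposal does not address the statement at hand. The statement is the classical Kashin--Gluskin--Garnaev theorem giving order estimates for the Kolmogorov, linear and Gelfand widths $d_n(B_p^\nu,\,l_q^\nu)$, $\lambda_n(B_p^\nu,\,l_q^\nu)$, $d^n(B_p^\nu,\,l_q^\nu)$ of finite-dimensional balls; it is a purely finite-dimensional, geometric result, which the paper itself does not prove but imports from \cite{bib_kashin, bib_gluskin, garn_glus}. A proof would require entirely different tools: for the upper bounds, random orthogonal projections or norm estimates for random matrices (Kashin's and Gluskin's constructions); for the lower bounds, volume comparison and duality between Kolmogorov and Gelfand widths ($d^n(B_p^\nu,\,l_q^\nu)=d_n(B_{q'}^\nu,\,l_{p'}^\nu)$), together with the interpolation-type interplay between the regimes $p<q\le 2$, $1<p<2<q$, and $2\le p<q$.

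What you have written instead is an outline of the proof of Theorem \ref{emb_trm_peak}: the reduction of the weighted Sobolev embedding on the peak domain $\Omega_{[\tau_-,\,\tau_+]}$ to the one-dimensional Riemann--Liouville operator $\tilde I_{r,u,w,\tau_+}$ and Stepanov's criterion (Theorem \ref{step_trm}), with the chain of John subdomains and Reshetnyak's polynomial approximation. None of that machinery (Hardy-type inequalities, flexible cone decompositions, the weights $g_0$, $v_0$, $\varphi$) has any bearing on the widths of $B_p^\nu$ in $l_q^\nu$. So the proposal, whatever its merits as a sketch of the embedding theorem, contains no argument for the statement it was supposed to prove, and the gap is total rather than local.
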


Let us formulate the result on estimates of widths $d_n(W^r_p([0,
\, 1]^d), \, L_q([0, \, 1]^d))$, $\lambda_n(W^r_p([0, \, 1]^d), \,
L_q([0, \, 1]^d))$ and $d^n(W^r_p([0, \, 1]^d), \, L_q([0, \,
1]^d))$ (see \cite{bibl6, tikh_babaj, bib_majorov, bib_ismag,
bib_kashin, bibl11, bibl13, vybiral, de_vore_sharpley}).
\begin{trma}
\label{sob_dn} Let $r\in \N$, $1\le p, \, q\le \infty$,
$\displaystyle \frac rd +\frac 1q-\frac 1p>0$. Set
$$
\theta _{p,q,r,d}=\left\{\begin{array}{l}
\frac{\delta}{d},\quad \quad\quad\mbox{ if }\ p\ge q\quad \text{or}\quad p<q, \; \hat q\le 2, \\
\min \bigl\{\frac{\delta}{d}+\min\bigl\{\frac 12-\frac{1}{\hat q},
\, \frac 1p-\frac 1q\bigr\}, \, \frac{\hat q\delta}{2d}\bigr\},
\;\; \mbox{ if }\ p<q, \; \hat q>2.
\end{array}\right.
$$
In addition, suppose that $\frac{\delta}{d}+\min\bigl\{\frac
12-\frac{1}{\hat q}, \, \frac 1p-\frac 1q\bigr\}\ne\frac{\hat
q\delta}{2d}$ in the case $p<q$, $\hat q>2$. Then
$$
\vartheta_n(W^r_p([0, \, 1]^d), \, L_q([0, \, 1]^d))
\underset{r,d,p,q}{\asymp}n^{-\theta_{p,q,r,d}}.
$$
\end{trma}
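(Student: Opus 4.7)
The plan is to combine a dyadic decomposition of $\Omega$ near the peak with the Reshetnyak polynomial approximation on John subdomains (Theorem \ref{resh_teor}) and the Kashin--Gluskin finite-dimensional width estimates \eqref{gluskin}--\eqref{gluskin_gelf}. The embedding Theorem \ref{emb_trm_peak}, together with the standing conditions \eqref{sigma_prop}, \eqref{alpha_prop}, already ensures that the quantities $A_{0,[\tau_-,\tau_+]}$, $A_{1,[\tau_-,\tau_+]}$ converge on every piece we consider, and hence that $W^r_{p,g}(\Omega)$ embeds in $L_{q,v}(\Omega)$.

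For the upper bound I decompose $\Omega$ into a ``bulk'' piece $\Omega_{[2^{-k_0},1/2]}$, which is a John domain where Theorem \ref{sob_dn} applies after rescaling, and ``peak'' pieces $\Omega^{(k)}:=\Omega_{[2^{-k-1},2^{-k}]}$ for $k\ge k_0$. On $\Omega^{(k)}$ the functions $g_0$, $v_0$, $\varphi$ are by \eqref{go} and \eqref{limtrho} comparable to constants of the form $2^{k\beta_g}k^{-\alpha_g}\rho_g(k)$, $2^{k\beta_v}k^{-\alpha_v}\rho_v(k)$ and $2^{-k\sigma}k^{\theta}\omega(k)$. Since $\sigma>1$, $\Omega^{(k)}$ is elongated in the $z$-direction, so I further partition it into $N_k\asymp 2^{-k}/\varphi(2^{-k})$ John subdomains $G_{k,i}$ of diameter $\asymp \varphi(2^{-k})$ (with John constants uniform in $k$ by Definition \ref{fcaf} and \eqref{rrr}). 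On each $G_{k,i}$ Theorem \ref{resh_teor} yields a polynomial of degree $\le r-1$ approximating $f$ in $L_q$ with error $\lesssim (\mathrm{mes}\,G_{k,i})^{\delta/d}\|\nabla^r f\|_{L_p(G_{k,i})}$; multiplying by $v_0$ and substituting $\nabla^r f=g\psi$ converts this to the weighted norm with an effective scale factor $\gamma_k$ built from $v_0(2^{-k})$, $g_0(2^{-k})$ and $\varphi(2^{-k})^{\delta}$.

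Piecewise-polynomial approximation on scale $k$ reduces the problem to estimating the width of the image of a weighted $\ell_p^{N_k}$-ball in a weighted $\ell_q^{N_k}$-space, for which \eqref{gluskin}, \eqref{gluskin_lin}, \eqref{gluskin_gelf} give sharp bounds once $n_k$ degrees of approximation are allocated to that scale. Summing with $\sum_k n_k\le n$ and optimizing the allocation gives the upper bound. For the lower bound I concentrate on a single scale $k=k(n)$, exhibiting $N_k$ disjoint compactly-supported bump functions in the $G_{k,i}$ normalized so that after division by $\gamma_k$ their linear span realizes the image of $B_p^{N_k}$ in $l_q^{N_k}$ inside (a fixed multiple of) $W^r_{p,g}(\Omega)$ mapped into $L_{q,v}(\Omega)$. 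The Kashin--Gluskin lower bounds then give $\vartheta_n\gtrsim \gamma_k\cdot\Phi(n,N_k,p,q)$ (respectively $\Psi(n,N_k,p,q)$ or $\Phi(n,N_k,q',p')$), and choosing $k$ optimally matches the upper bound.

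The principal difficulty is the regime analysis in Part 2: for $\hat q>2$ the Kashin--Gluskin formulas split into three cases according to the ratio between $n$ and $N_k$, producing the four candidate exponents $\theta_1,\dots,\theta_4$ with associated scales $\sigma_j$. The hypothesis $\theta_{j_*}<\min_{j\ne j_*}\theta_j$ ensures that one regime strictly dominates; the delicate bookkeeping is to check that when the optimal $k_*(n)$ (defined so that $N_{k_*}\asymp n^{\sigma_{j_*}}$, or so that $k_*$ sits at a Kashin--Gluskin regime boundary) is substituted, the product $\gamma_{k_*}\cdot\Phi(n,N_{k_*},p,q)$ collapses to $n^{-\theta_{j_*}}\rho(n^{\sigma_{j_*}})$ with $\rho$ exactly the combination \eqref{rho_def}. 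In Part 1 the hypothesis $\alpha\ne\delta/d$ plays the analogous separation role between the ``interior'' rate $n^{-\delta/d}$ coming from the bulk and the ``peak'' rate $n^{-\alpha}$ coming from concentration near $z=0$.
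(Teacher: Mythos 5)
You have proved the wrong statement. The theorem in question is Theorem~\ref{sob_dn}: the order of $\vartheta_n(W^r_p([0,\,1]^d),\,L_q([0,\,1]^d))$ for the \emph{non-weighted} Sobolev class on the \emph{cube}. What you have written is an outline of the proof of Theorem~\ref{width_est}, the paper's main result on weighted classes $W^r_{p,g}(\Omega)$ over a domain with a peak: every ingredient you invoke --- the dyadic peak pieces $\Omega_{[2^{-k-1},\,2^{-k}]}$, the weights $g_0$, $v_0$, the profile $\varphi$ with $\sigma>1$, the exponent $\alpha$ of \eqref{alpha_prop}, the function $\rho$ of \eqref{rho_def}, the four candidates $\theta_1,\dots,\theta_4$ with scales $\sigma_j$ --- belongs to that theorem and has no meaning for $[0,\,1]^d$ with $g\equiv v\equiv 1$. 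Moreover, your first paragraph explicitly applies Theorem~\ref{sob_dn} to the ``bulk'' piece ``after rescaling'', so read as a proof of Theorem~\ref{sob_dn} the argument is circular.

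Note also that the paper does not prove Theorem~\ref{sob_dn} at all; it is quoted as a known result of Tikhomirov, Ismagilov, Maiorov, Kashin, Temlyakov, Vybiral, DeVore--Sharpley--Riemenschneider and others, so there is no internal proof to match your attempt against. A self-contained argument for the cube would proceed by a \emph{uniform} dyadic partition of $[0,\,1]^d$ into $\asymp 2^{md}$ congruent subcubes, piecewise-polynomial approximation in the spirit of Birman--Solomyak (or Lemma~\ref{m_part} applied to $G=[0,\,1]^d$), discretization to the balls $B_p^\nu$ in $l_q^\nu$, and the Kashin--Gluskin estimates \eqref{gluskin}--\eqref{gluskin_gelf}; there is no peak geometry and no weight to track, and the two regimes in the definition of $\theta_{p,q,r,d}$ arise solely from the case analysis inside $\Phi$ and $\Psi$. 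If you intended to address Theorem~\ref{width_est}, your sketch is broadly aligned with the paper's strategy there, but it should then be compared against Section~4, not against Theorem~\ref{sob_dn}.
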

The following lemma was proved in \cite{vas_bes}.
\begin{Lem}
\label{sum_lem} Let $\Lambda_*:(0, \, +\infty) \rightarrow (0, \,
+\infty)$ be an absolutely continuous function such that $\lim
\limits_{y\to +\infty}\frac{y\Lambda _*'(y)} {\Lambda _*(y)}=0$.
Then for any $\varepsilon >0$
\begin{align}
\label{rho_yy} t^{-\varepsilon}
\underset{\varepsilon,\Lambda_*}{\lesssim}
\frac{\Lambda_*(ty)}{\Lambda_*(y)}\underset{\varepsilon,
\Lambda_*}{\lesssim} t^\varepsilon,\quad 1\le y<\infty, \;\; 1\le
t<\infty.
\end{align}
\end{Lem}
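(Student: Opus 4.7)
My plan is to treat this as a one-variable calculus statement about the slowly varying function $\Lambda_*$, using the hypothesis $\lim_{y\to\infty} y\Lambda_*'(y)/\Lambda_*(y)=0$ to control the logarithmic derivative and then reducing the general case $1\le y<\infty$ to the large-$y$ case.

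First, fix $\varepsilon>0$. By the hypothesis, there exists $y_0=y_0(\varepsilon,\Lambda_*)\ge 1$ such that $|s\Lambda_*'(s)/\Lambda_*(s)|<\varepsilon$ for all $s\ge y_0$. Since $\Lambda_*$ is absolutely continuous and strictly positive, $\log\Lambda_*$ is absolutely continuous on $[y_0,\infty)$ with $(\log\Lambda_*)'(s)=\Lambda_*'(s)/\Lambda_*(s)$ a.e. Hence for $y\ge y_0$ and $t\ge 1$,
$$
\Bigl|\log\frac{\Lambda_*(ty)}{\Lambda_*(y)}\Bigr|=\Bigl|\int_y^{ty}\frac{s\Lambda_*'(s)}{\Lambda_*(s)}\,\frac{ds}{s}\Bigr|\le\varepsilon\int_y^{ty}\frac{ds}{s}=\varepsilon\log t,
$$
which gives $t^{-\varepsilon}\le\Lambda_*(ty)/\Lambda_*(y)\le t^\varepsilon$ on this range, with no constant.

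Second, I handle $1\le y<y_0$. Since $\Lambda_*$ is continuous and positive on the compact set $[1,y_0]$, there are constants $0<c_1\le c_2<\infty$ (depending only on $\varepsilon$ and $\Lambda_*$, through $y_0$) such that $c_1\le\Lambda_*(s)\le c_2$ for $s\in[1,y_0]$. If also $ty\le y_0$, then $\Lambda_*(ty)/\Lambda_*(y)\in[c_1/c_2,c_2/c_1]$; combining with $t^{-\varepsilon}\le 1\le t^\varepsilon$ yields both inequalities with constant $c_2/c_1$. If instead $ty>y_0>y$, I split
$$
\frac{\Lambda_*(ty)}{\Lambda_*(y)}=\frac{\Lambda_*(ty)}{\Lambda_*(y_0)}\cdot\frac{\Lambda_*(y_0)}{\Lambda_*(y)},
$$
apply the first step to the first factor (with parameter $ty/y_0\ge 1$ in place of $t$, noting $ty/y_0\le t$), and bound the second factor by $c_2/c_1$ above and $c_1/c_2$ below. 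Using $(ty/y_0)^\varepsilon\le t^\varepsilon$ and $(ty/y_0)^{-\varepsilon}\ge y_0^{-\varepsilon}t^{-\varepsilon}$, both two-sided bounds follow with constants depending only on $\varepsilon$ and $\Lambda_*$.

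The argument is essentially routine; the only nuance is the bookkeeping at the transition point $y_0$, which I expect to be the one spot that needs a careful case split rather than a one-line estimate. No step seems genuinely hard: the limit hypothesis is used exactly once (to produce $y_0$), the Newton--Leibniz formula handles the large-$y$ regime, and continuity/compactness absorbs the rest into the implicit constant allowed by $\underset{\varepsilon,\Lambda_*}{\lesssim}$.
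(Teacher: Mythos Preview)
Your argument is correct and is the standard slowly-varying-function proof: control the logarithmic derivative on $[y_0,\infty)$ via the hypothesis, integrate to get the exact inequality there, and absorb the compact range $[1,y_0]$ into the constant. One tiny technical point: since $\Lambda_*$ is only absolutely continuous, $\Lambda_*'(s)$ exists merely a.e., so the bound $|s\Lambda_*'(s)/\Lambda_*(s)|<\varepsilon$ should be stated for a.e.\ $s\ge y_0$; this is harmless for the integral estimate. Also, in the split case you actually get the cleaner bound $(ty/y_0)^{-\varepsilon}\ge t^{-\varepsilon}$ directly from $ty/y_0<t$, so the extra factor $y_0^{-\varepsilon}$ is unnecessary.

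As for comparison with the paper: the paper does not prove this lemma at all---it simply cites \cite{vas_bes} (``The following lemma was proved in \cite{vas_bes}''). Your self-contained argument is exactly what one would expect the cited proof to be, and nothing more is needed.
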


\section{The embedding theorem for weighted Sobolev classes on a domain with a peak}
\begin{Lem}
\label{un_fca} Let $0<\tau_0<\tau_1\le \frac 12$ and $c>0$ be such
that
\begin{align}
\label{t1t0} \tau_1-\tau_0\le c\varphi(\tau_0),
\end{align}
and let $L$ be a Lipschitz constant of the function
$\varphi|_{[\tau_0, \, \tau_1]}$. Then $\cup _{z\in [\tau_0, \,
\tau_1]} \Omega_z\in {\bf FC}(b)$, with $b=b(a, \, d, \, c, \, L,
\, \underline{c}, \, \overline{c})>0$.
\end{Lem}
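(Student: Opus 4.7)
My plan is to build a John curve for the union $G:=\bigcup_{z\in[\tau_0,\tau_1]}\Omega_z$ by concatenating, for each $x\in G$, the given John curve inside the slab $\Omega_z$ containing $x$ with the vertical segment joining $\eta_z$ to a fixed basepoint $x_*=\eta_{\tau_1}$.

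The first step is to establish that $\varphi$ is essentially constant on $[\tau_0,\tau_1]$. Monotonicity of $\varphi$ gives $\varphi(s)\ge\varphi(\tau_0)$, and the Lipschitz hypothesis combined with (\ref{t1t0}) yields $\varphi(s)\le\varphi(\tau_0)+L(\tau_1-\tau_0)\le(1+cL)\varphi(\tau_0)$ for $s\in[\tau_0,\tau_1]$. Together with (\ref{rrr}) and (\ref{opr2phi}), this gives the two-sided bound $\underline{R}(s)\asymp\overline{R}(s)\asymp\varphi(\tau_0)$ uniformly for $s\in[\tau_0,\tau_1]$, with implied constants depending only on $a,d,c,L,\underline{c},\overline{c}$.

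Next I would produce the candidate curve. For $x\in G$ choose $z\in[\tau_0,\tau_1]$ with $x\in\Omega_z$; let $\gamma^{(1)}:[0,T_1]\to\Omega_z$ be the arclength-parametrized John curve from $x$ to $\eta_z$ supplied by $\Omega_z\in{\bf FC}(a,\eta_z)$, and let $\gamma^{(2)}:[0,\tau_1-z]\to G$ be the segment $\tau\mapsto\eta_{z+\tau}$. Applying property (3) of Definition \ref{fca} at $t=T_1$ gives $B_{aT_1}(\eta_z)\subset\Omega_z$, whence $aT_1\le\underline{R}(z)\lesssim\varphi(\tau_0)$. Thus the concatenation $\gamma_x$ is absolutely continuous with unit speed, joins $x$ to $\eta_{\tau_1}$, stays in $G$ (because $\eta_s\in\Omega_s\subset G$ for each $s\in[z,\tau_1]$), and has total length bounded by $K\varphi(\tau_0)$ for some $K=K(a,c,L,\overline{c})$.

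Finally I would verify the ball condition $B_{bt}(\gamma_x(t))\subset G$. On the first leg $t\in[0,T_1]$ it is immediate with any $b\le a$, from the John condition for $\Omega_z$. On the second leg $t\in[T_1,T_1+\tau_1-z]$ one has $\gamma_x(t)=\eta_s$ with $s\in[z,\tau_1]$, and by definition $B_{\underline{R}(s)}(\eta_s)\subset\Omega_s\subset G$; since $t\le K\varphi(\tau_0)$ while $\underline{R}(s)\ge C\varphi(\tau_0)$ for some $C=C(a,d,c,L,\underline{c},\overline{c})>0$ from the first step, the choice $b=\min\{a,C/K\}$ forces $bt\le\underline{R}(s)$ and the required inclusion follows. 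I do not anticipate a real obstacle; the argument is essentially bookkeeping, and the one point that genuinely requires care is checking that the uniform constants $C$ and $K$ depend only on the listed parameters, so that the resulting $b$ has the claimed dependence.
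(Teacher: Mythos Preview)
Your proposal is correct and follows essentially the same approach as the paper: establish $\underline{R}(s)\asymp\overline{R}(s)\asymp\varphi(\tau_0)$ uniformly on $[\tau_0,\tau_1]$ via monotonicity and the Lipschitz bound, then concatenate the John curve in $\Omega_z$ with the vertical segment along the $\eta$-axis. The only cosmetic difference is that the paper takes $\eta_{\tau_0}$ as the basepoint while you take $\eta_{\tau_1}$, which is immaterial; in fact your write-up spells out the verification of the ball condition on the second leg more explicitly than the paper does.
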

\begin{proof}
For any $t\in [\tau_0, \, \tau_1]$ $$\varphi(\tau_0)\le
\varphi(t)\le \varphi(\tau_0)+L(t-\tau_0)\le (L\cdot
c+1)\varphi(\tau_0).$$ Therefore,
\begin{align}
\label{rt} \underline{R}_t \stackrel{(\ref{rrr})}
{\underset{a,d}{\asymp}} \overline{R}_t \stackrel
{(\ref{opr2phi})}{\underset{\underline{c}, \, \overline{c}, \, L,
\, c}{\asymp}} \varphi(\tau_0).
\end{align}
Let $z\in [\tau_0, \, \tau_1]$, $x\in \Omega_z$, and let
$\gamma_x:[0, \, T(x)]\rightarrow \Omega_z$ be the curve from
Definition \ref{fca}, $\gamma_x(T(x))=\eta_z$. Then $T(x)\le
a^{-1}\underline{R}_z$. Extend the curve $\gamma_x$ by connecting
$\eta_z$ and $\eta_{\tau_0}$ by a segment. It remains to apply
(\ref{rt}) and (\ref{t1t0}).
\end{proof}
Set $G_z=\{y\in \R^{d-1}:\, (y, \, z)\in \Omega\}$. Then
$$G_z\supset \{y\in \R^{d-1}:\;\; (y, \, z)\in \Omega_z\} \supset
B^{d-1}_{\underline{R}(z)}(0),$$ which implies
\begin{align}
\label{dgz} {\rm dist}\, (0, \, \partial G_z)\ge \underline{R}(z),
\quad {\rm diam}\, G_z\ge 2\underline{R}(z).
\end{align}
From (\ref{rrr}) and Definition \ref{fcaf} it follows that
\begin{align}
\label{rz} \underline{R}(z) \underset{a,d}{\asymp}
\overline{R}(z)\ge \underline{c} \varphi(z).
\end{align}

\begin{Lem}
\label{diam_g_z} The following order equalities hold:
\begin{align}
\label{dist} {\rm dist}\, (0, \, \partial G_z) \underset
{\mathfrak{Z}_1}{\asymp} \varphi(z), \quad {\rm diam}\, G_z
\underset{\mathfrak{Z}_1}{\asymp} \varphi(z).
\end{align}
\end{Lem}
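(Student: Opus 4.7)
The two lower bounds are already essentially on the page: (\ref{dgz}) gives ${\rm dist}(0, \partial G_z) \ge \underline{R}(z)$ and ${\rm diam}\, G_z \ge 2\underline{R}(z)$, while (\ref{rz}) together with the lower half of (\ref{opr2phi}) gives $\underline{R}(z) \underset{a,d,\underline{c}}{\gtrsim} \varphi(z)$. So only the upper halves of (\ref{dist}) require work. Note also that $\eta_z \in \Omega_z \subset \Omega$, so $0 \in G_z$; this lets me reduce the estimate of ${\rm dist}(0, \partial G_z)$ to an estimate of $\sup_{y \in G_z} \|y\|$, which in turn controls ${\rm diam}\, G_z$ as well.

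For the upper bound I localize in the $z'$-parameter. Given $y \in G_z$, fix $z' \in (0, 1/2]$ with $(y, z) \in \Omega_{z'}$. The inclusion $\Omega_{z'} \subset B_{\overline{R}(z')}(\eta_{z'})$ (inherent in the definition of $\overline{R}_{\eta_{z'}}(\Omega_{z'})$) yields simultaneously $\|y\| \le \overline{R}(z')$ and $|z - z'| \le \overline{R}(z')$, and (\ref{opr2phi}) upgrades this to $\|y\| \le \overline{c}\varphi(z')$ and $|z - z'| \le \overline{c}\varphi(z')$. The second inequality is the key localization tying $z'$ to $z$; once I know $\varphi(z') \underset{\mathfrak{Z}_1}{\lesssim} \varphi(z)$, the first inequality will close the argument.

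The main obstacle is thus to produce the doubling-type comparison $\varphi(z') \underset{\mathfrak{Z}_1}{\lesssim} \varphi(z)$. If $z' \le z$, monotonicity of $\varphi$ is immediate. If $z' > z$, I write $\varphi(z') - \varphi(z) = \int_z^{z'}\varphi'(t)\, dt \le (z' - z)\sup_{t \le z'}\varphi'(t)$, substitute $z' - z \le \overline{c}\varphi(z')$, and absorb: provided $\overline{c}\sup_{t \le z'}\varphi'(t) \le 1/2$, rearrangement gives $\varphi(z') \le 2\varphi(z)$. The hypothesis $\lim_{t \to +0}\varphi'(t) = 0$ guarantees that this absorption succeeds for all $z$ below a $\varphi$-dependent threshold; for $z$ above the threshold we are working inside a compact subinterval of $(0, 1/2]$ on which $\varphi$ is continuous and strictly positive, so the ratio $\varphi(z')/\varphi(z)$ is bounded trivially (using also that $z' \in [z/(1+\kappa_\Omega),\, z/(1-\kappa_\Omega)]$, which follows from $|z-z'| \le \overline{R}(z') \le \kappa_\Omega z'$). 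Combining the two regimes produces $\|y\| \underset{\mathfrak{Z}_1}{\lesssim} \varphi(z)$ uniformly in $y \in G_z$, hence ${\rm diam}\, G_z \underset{\mathfrak{Z}_1}{\lesssim} \varphi(z)$ and, via $0 \in G_z$, ${\rm dist}(0, \partial G_z) \underset{\mathfrak{Z}_1}{\lesssim} \varphi(z)$, completing (\ref{dist}).
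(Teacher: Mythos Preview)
Your proof is correct and follows essentially the same route as the paper's: both localize an arbitrary $(y,z)\in\Omega$ in some $\Omega_{z'}$ (the paper writes $\zeta$ for your $z'$), extract $\|y\|\le\overline{c}\varphi(z')$ and $|z-z'|\le\overline{c}\varphi(z')$ from (\ref{opr2phi}), and then use $\varphi'(t)\to 0$ together with the crude bound $z\underset{\kappa_\Omega}{\asymp} z'$ to compare $\varphi(z')$ with $\varphi(z)$. The only cosmetic difference is that the paper proves the two-sided estimate $\varphi(z')\underset{\mathfrak{Z}_1}{\asymp}\varphi(z)$ in one stroke via the integral $\varphi(z)=\varphi(z')+\int_0^{\theta\varphi(z')}\varphi'(z'+s)\,ds$, whereas you split into $z'\le z$ (monotonicity) and $z'>z$ (absorption); your version is marginally more streamlined since only the direction $\varphi(z')\underset{\mathfrak{Z}_1}{\lesssim}\varphi(z)$ is actually needed.
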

\begin{proof}
The lower estimates follow from (\ref{dgz}) and (\ref{rz}).

Prove the upper estimates. Let
\begin{align}
\label{z01} \zeta \in (0, \, 1),\quad \Omega_\zeta\cap\{(y, \,
z):\; y\in \R^{d-1}\}\ne \varnothing.
\end{align}
It is sufficient to check that $\varphi
(\zeta)\underset{\mathfrak{Z}_1}{\asymp} \varphi(z)$ and to apply
Definition \ref{fcaf}. From (\ref{opr2k}) and (\ref{opr2phi}) it
follows that $\overline{R}(\zeta)\le \overline{c}\varphi(\zeta)$
and $\overline{R}(\zeta)\le \kappa_\Omega\zeta$. Further, $|\zeta
-z|\le \overline{R}(\zeta)$. Thus,
$\zeta-\overline{c}\varphi(\zeta)\le z\le \zeta
+\overline{c}\varphi(\zeta)$ and $(1-\kappa_\Omega) \zeta\le z\le
(1+\kappa_\Omega) \zeta$. Hence,
$z=\zeta+\theta(\zeta)\varphi(\zeta)$, with $\theta(\zeta)\in
[-\overline{c}, \, \overline{c}]$. Since the function $\varphi$ is
Lipschitz, we have
\begin{align}
\label{phi_z}
\varphi(z)=\varphi(\zeta+\theta(\zeta)\varphi(\zeta)) =
\varphi(\zeta) +\int \limits _0^{\theta(\zeta)\varphi(\zeta)}
\varphi'(\zeta+s)\, ds.
\end{align}
Recall that $\varphi'(t)\underset{t\to +0}{\to} 0$. Therefore, for
any $\varepsilon >0$ there exists $t_*(\varepsilon)>0$ such that
$\varphi'(t)<\varepsilon$ for any $t\in (0, \, t_*(\varepsilon))$.
Since $\zeta\le \frac{z}{1-\kappa_\Omega}$,
$\varphi(t)\underset{t\to +0}{\to} 0$ and
$\varphi'(t)\underset{t\to +0}{\to} 0$, there exists
$z_*(\varepsilon)>0$ such that $\varphi'(\zeta+s)<\varepsilon$ for
$s\in [-\overline{c}\varphi(\zeta), \, \overline{c} \varphi
(\zeta)]$, $z\in (0, \, z_*(\varepsilon)]$ (indeed,
$\zeta-\overline{c}\varphi(\zeta)>0$ for small $z$ and $\zeta+s\le
\frac{z}{1-\kappa_\Omega}+\overline{c}\varphi\left(\frac{z}{1-\kappa_\Omega}\right)
\underset{z\to +0}{\to} 0$). Hence, by (\ref{phi_z}), $|\varphi(z)
-\varphi(\zeta)|\le \overline{c} \varepsilon\varphi(\zeta)$. Take
$\varepsilon= \frac{1} {2\overline{c}}$ and get
$\frac{\varphi(\zeta)}{2}\le \varphi(z)\le \frac{3\varphi
(\zeta)}{2}$ for any $z\in (0, \, z_*(1/2\overline{c})]$ and
$\zeta$ from (\ref{z01}). For $z>z_*(1/2\overline{c})$ we apply
the relation $z\underset{\kappa_\Omega}{\asymp} \zeta$ and take
into account that the function $\varphi$ is non-decreasing and
Lipschitz.
\end{proof}

\begin{Lem}
\label{3_prop} Suppose that the functions $g_0$, $v_0$ satisfy
(\ref{go}), and let $\lambda_*\ge 1$. Then
$g_0(z)\underset{\mathfrak{Z}_1, \, \lambda_*}{\asymp} g_0(t)$,
$v_0(z)\underset{\mathfrak{Z}_1, \, \lambda_*}{\asymp} v_0(t)$ for
any $z\in \left(0, \, \frac 12\right]$ and for any $$t\in
\left[\max\left\{\frac{z}{2\lambda_*}, \,
z-\lambda_*\varphi(z)\right\}, \, \min
\left\{z+\lambda_*\varphi(z), \, \frac 12\right\}\right].$$
\end{Lem}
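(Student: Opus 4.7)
The plan is to compare $g_0(z)$ with $g_0(t)$ (and $v_0(z)$ with $v_0(t)$) by iterating (\ref{go}) along a finite chain $z=z_0,z_1,\ldots,z_N=t$ in which every consecutive pair $z_i,z_{i+1}$ lies in the common comparison interval $[\max\{w/2,\,w-\varphi(w)\},\,w+\varphi(w)]$ with $w=z_i$. Each application of (\ref{go}) then gives $C_*^{-1}\le g_0(z_{i+1})/g_0(z_i)\le C_*$ and the same bound for $v_0$, so telescoping yields the desired equivalences with constant $C_*^N$, provided the chain length $N$ can be bounded by some $N_*=N_*(\mathfrak{Z}_1,\lambda_*)$.

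For the forward portion $t\in[z,\,z+\lambda_*\varphi(z)]$ I would take $z_{i+1}=\min\{z_i+\varphi(z_i),\,t\}$. Monotonicity of $\varphi$ gives $\varphi(z_i)\ge\varphi(z)$, so each non-terminal step advances by at least $\varphi(z)$, and the chain reaches $t$ in $N\le\lceil\lambda_*\rceil+1$ steps. For the backward portion $t\in[\tau_-,\,z]$ with $\tau_-:=\max\{z/(2\lambda_*),\,z-\lambda_*\varphi(z)\}$ I would invoke $\lim_{s\to+0}\varphi'(s)=0$ to fix a threshold $z_*=z_*(\mathfrak{Z}_1,\lambda_*)>0$ with $\sup_{(0,\,2z_*]}\varphi'\le 1/(4\lambda_*)$. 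In the regime $z\le z_*$ the Lipschitz estimate forces $\varphi(z_i)\in[\tfrac34\varphi(z),\,\tfrac54\varphi(z)]$ together with $\varphi(z_i)<z_i/2$ throughout $[\tau_-,\,z]$, so the backward iteration $z_{i+1}=\max\{z_i-\varphi(z_i),\,t\}$ advances by $\asymp\varphi(z)$ at each step and finishes in $O(\lambda_*)$ iterations.

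The genuinely delicate case is the backward direction when $z>z_*$, where naively $\varphi(z_i)$ could shrink as one descends. Here, however, $z$ ranges over the compact interval $[z_*,\,\tfrac12]$ on which $\varphi$ is bounded below by $\varphi(z_*)>0$, and I would pre-select a finite collection $w_1>w_2>\cdots>w_M$ in $(0,\,1]$ with $M=M(\mathfrak{Z}_1,\lambda_*)$ such that the comparison intervals of (\ref{go}) centered at the $w_j$ cover $[z_*/(2\lambda_*),\,1]$ with consecutive ones overlapping; any pair $z,t$ in this range can then be chained through at most $M$ of them. As a secondary mechanism, the halving option inside $\max\{z_i/2,\,z_i-\varphi(z_i)\}$ yields at least a geometric advance whenever $\varphi(z_i)\ge z_i/2$, bounding the number of halvings by $\log_2(2\lambda_*)$. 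The main obstacle is to prevent the chain length from blowing up as $z\to 0$ or as $\varphi(z)/z\to 0$; the slow-variation property $\lim_{s\to+0}\varphi'(s)=0$ is the essential ingredient that keeps $\varphi$ nearly constant multiplicatively across each relevant interval and so keeps $N$ uniformly bounded by a constant depending only on $\mathfrak{Z}_1$ and $\lambda_*$.
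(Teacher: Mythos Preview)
Your approach is correct and essentially the same as the paper's: chain applications of~(\ref{go}) along a sequence of points whose length is bounded in terms of $\mathfrak{Z}_1$ and $\lambda_*$. The forward case ($t\ge z$) is handled identically.

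For the backward case the paper uses a cleaner device than your direct backward iteration and covering argument: rather than stepping downward from $z$ to $t$, it simply observes that $z$ lies within \emph{forward} reach of $t$ (or of the left endpoint), and then invokes the already-proven forward estimate with roles reversed. Concretely, for $z\le z_*$ one has $\varphi(t)\ge\varphi(z-\lambda_*\varphi(z))\ge\varphi(z)/2$, so $z\le t+2\lambda_*\varphi(t)$ and~(\ref{_t_z_z_phi_}) applies from $t$ with constant $2\lambda_*$; for $z>z_*$ one has $\varphi(z/(2\lambda_*))\ge\varphi(z_*/(2\lambda_*))\underset{\mathfrak{Z}_1,\lambda_*}{\asymp}1$, so $z\le z/(2\lambda_*)+c\,\varphi(z/(2\lambda_*))$ for some $c=c(\mathfrak{Z}_1,\lambda_*)$, and the forward estimate from $z/(2\lambda_*)$ reaches both $t$ and $z$. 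This avoids your compactness/covering construction entirely and makes the dependence of the implicit constant on $\lambda_*$ more transparent. Your ``secondary mechanism'' via halving is unnecessary.
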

\begin{proof}
First prove that for $\lambda_*>0$
\begin{align}
\label{_t_z_z_phi_} g_0(z) \underset{\mathfrak{Z}_1, \,
\lambda_*}{\asymp} g_0(t), \quad v_0(z) \underset{\mathfrak{Z}_1,
\, \lambda_*}{\asymp} v_0(t), \quad 0<z\le \frac 12, \;\; z\le
t\le \min \left\{z+\lambda _*\varphi(z), \, \frac 12\right\}.
\end{align}
To this end we construct a sequence $\{z_{(k)}\}$ by induction.
Set $z_{(0)}=z$. Suppose that $z_{(k)}\le \frac 12$ is constructed
for some $k\in \Z_+$. If $z_{(k)}=\frac 12$, then the construction
is interrupted. If $z_{(k)}<\frac 12$, then we set $z_{(k+1)}=\min
\left\{z_{(k)}+\varphi(z_{(k)}), \, \frac 12 \right\}$. Let $k\in
\N$, $z_{(k)}<\frac 12$. Since the function $\varphi$ is
non-decreasing, we get $z_{(k)}\ge z_{(k-1)} +\varphi(z)$. Hence,
$z_{(k)}\ge z+k\varphi(z)$, and for $k=\lceil \lambda_*\rceil$ we
obtain $z_{(k)}\ge z+\lambda_* \varphi(z)$. It remains to take
into account that $g_0(t) \underset {\mathfrak{Z}_1}{\asymp}
g_0(z_{(j-1)})$, $v_0(t) \underset {\mathfrak{Z}_1}{\asymp}
v_0(z_{(j-1)})$ for any $t\in [z_{(j-1)}, \, z_{(j)}]$.

Let, now, $\max \left\{\frac{z}{2\lambda_*}, \,
z-\lambda_*\varphi(z)\right\} \le t\le z$. Since $\lim \limits
_{s\to +0}\varphi'(s)=0$, there exists $z_*=z_*(\mathfrak{Z}_1, \,
\lambda_*)$ such that $\frac z2\le z-\lambda_*\varphi(z)$ and
$|\varphi'(z)|\le \frac{1}{2\lambda_*}$ for any $z\le z_*$. Then
for each $z\le z_*$ we get $\varphi(z-\lambda_*\varphi(z))\ge
\frac{\varphi(z)}{2}$. Therefore, $z\le z-\lambda_*\varphi(z)+
2\lambda_* \varphi(z-\lambda_*\varphi(z))$. It remains to apply
(\ref{_t_z_z_phi_}). Let $z>z_*$. Prove that $g_0(z)
\underset{\mathfrak{Z}_1, \, \lambda_*}{\asymp} g_0(t)$, $v_0(z)
\underset{\mathfrak{Z}_1, \, \lambda_*}{\asymp} v_0(t)$ for any
$t\in \left[\frac{z}{2\lambda_*}, \, z\right]$. We have
$\varphi\left(\frac{z}{2\lambda_*}\right)\ge
\varphi\left(\frac{z_*}
{2\lambda_*}\right)\underset{\mathfrak{Z}_1, \lambda_*}{\asymp}
1$. Consequently, there is $c_{\mathfrak{Z}_1, \lambda_*}>0$ such
that $z\le \frac{z}{2\lambda_*}+c_{\mathfrak{Z}_1, \lambda_*}
\varphi\left( \frac{z}{2\lambda_*}\right)$. Apply
(\ref{_t_z_z_phi_}) once again and obtain the desired estimate.
\end{proof}

We say that sets $A$, $B\subset \R ^d$ do not overlap if $A\cap B$
is a Lebesgue nullset.

Let $E$, $E_1, \, \dots, \, E_m\subset \R^d$ be measurable sets.
We say that $\{E_i\}_{i=1}^m$ is a covering of $E$ if the set
$E\backslash \left(\cup _{i=1}^m E_i\right)$ is a Lebesgue
nullset. We say that $\{E_i\}_{i=1}^m$ is a partition of $E$ if
the sets $E_i$ do not overlap pairwise and the set $\left(\cup
_{i=1}^m E_i\right)\bigtriangleup E$ is a Lebesgue nullset.

Let $T$ be a covering of a set $G$. Put
$$
{\cal N}_{T,E}={\rm card}\{E'\in T:\, {\rm mes}(E\cap E')>0\},
\quad E\in T,
$$
$$
{\cal N}_T=\sup _{E\in T}{\cal N}_{T,E}.
$$

\begin{Lem}
\label{kratn_pokr} Let $0\le \tau_k<\tau_{k-1}<\dots <\tau_0\le
\frac 12$, $0<\hat c<1$,
\begin{align}
\label{tj1} \tau_{j-1}-\tau_j\ge \hat c\varphi(\tau_{j-1}),
\end{align}
$G_{(j)}=\Omega_{[\tau_j, \, \tau_{j-1}]}$, $1\le j\le k$,
$T=\{G_{(j)}\}_{j=1}^{k}$. Then ${\rm card}\, {\cal N}_T
\underset{\mathfrak{Z}_1,\hat c}{\lesssim} 1$.
\end{Lem}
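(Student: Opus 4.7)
The plan is to fix $j\in\{1,\dots,k\}$ and bound uniformly the number of indices $j'$ with $\mathrm{mes}(G_{(j)}\cap G_{(j')})>0$. By symmetry in $j$ and $j'$ it suffices to control the count with $j'>j$; the cases $j'\in\{j,j+1\}$ contribute only a constant, so the real work is $j'\ge j+2$.

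First I extract a geometric bound. From (\ref{opr2phi}) together with (\ref{rrr}) we have $\Omega_z\subset B_{\overline{c}\varphi(z)}(\eta_z)$. If $p\in G_{(j)}\cap G_{(j')}$, then $p\in\Omega_{z_1}\cap\Omega_{z_2}$ for some $z_1\in[\tau_j,\tau_{j-1}]$ and $z_2\in[\tau_{j'},\tau_{j'-1}]$, and the triangle inequality gives
$$
\tau_j-\tau_{j'-1}\le z_1-z_2=|\eta_{z_1}-\eta_{z_2}|\le \overline{c}\bigl(\varphi(z_1)+\varphi(z_2)\bigr).
$$

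Next I show $\varphi(\tau_j)\underset{\mathfrak{Z}_1}{\asymp}\varphi(\tau_{j'-1})$. Writing $p=(y,z)$, each set $\Omega_{z_i}$ meets the hyperplane $\{(y',z):y'\in\R^{d-1}\}$, so condition (\ref{z01}) holds with $\zeta=z_i$. Applying the internal computation of Lemma \ref{diam_g_z} twice yields $\varphi(z_1)\underset{\mathfrak{Z}_1}{\asymp}\varphi(z)\underset{\mathfrak{Z}_1}{\asymp}\varphi(z_2)$. Since $\varphi$ is non-decreasing, $\varphi(\tau_j)\le\varphi(z_1)$ and $\varphi(z_2)\le\varphi(\tau_{j'-1})\le\varphi(\tau_j)$, whence the chain of inequalities forces $\varphi(\tau_j)\asymp\varphi(\tau_{j'-1})\asymp\varphi(\tau_{j'-2})$ (the last because $\tau_{j'-1}<\tau_{j'-2}<\tau_j$ and $\varphi$ is monotone).

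Combining both ingredients with (\ref{tj1}), for $j'\ge j+2$ I get
$$
(j'-j-1)\,\hat c\,\varphi(\tau_{j'-2})\;\le\;\sum_{i=j+1}^{j'-1}(\tau_{i-1}-\tau_i)\;=\;\tau_j-\tau_{j'-1}\;\le\; 2\overline{c}\,\varphi(\tau_j),
$$
and the comparability $\varphi(\tau_{j'-2})\asymp\varphi(\tau_j)$ yields $j'-j-1\underset{\mathfrak{Z}_1,\hat c}{\lesssim}1$. Summing both signs of $j'-j$ bounds ${\cal N}_{T,G_{(j)}}$, hence ${\cal N}_T$, by a constant depending only on $\mathfrak{Z}_1$ and $\hat c$.

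The hardest point is the passage from $|z_1-z_2|\le \overline{c}(\varphi(z_1)+\varphi(z_2))$ to $\varphi(z_1)\asymp\varphi(z_2)$; this is not the formal statement of Lemma \ref{diam_g_z} but is exactly what is proved inside it, using the Lipschitz character of $\varphi$ and the vanishing of $\varphi'$ at $0$ for the small-$z$ regime, together with (\ref{opr2k}) and the boundedness of $\varphi$ away from $0$ for the large-$z$ regime. Everything else in the argument is a purely combinatorial counting step.
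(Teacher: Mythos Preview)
Your argument is correct and follows the same core strategy as the paper's proof: a nonempty intersection $G_{(j)}\cap G_{(j')}$ forces two fibers $\Omega_{z_1}$, $\Omega_{z_2}$ to meet, hence $|z_1-z_2|\le \overline c(\varphi(z_1)+\varphi(z_2))$, from which $\varphi$-comparability of the relevant $\tau$-values and the gap condition~(\ref{tj1}) yield the bound on $j'-j$. The paper carries this out with an explicit threshold $z_0$ and separate case analyses for $i<j$ and $i>j$, whereas you streamline both steps by invoking the internal computation of Lemma~\ref{diam_g_z} and a symmetry/telescoping argument; one small imprecision is that your displayed inequality $\tau_j-\tau_{j'-1}\le 2\overline c\,\varphi(\tau_j)$ really needs the implicit constant from $\varphi(z_1)\asymp\varphi(z_2)$, so it should read $\underset{\mathfrak Z_1}{\lesssim}\varphi(\tau_j)$.
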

\begin{proof}
Since the function $\varphi$ is Lipschitz and $\lim \limits _{z\to
0} \varphi'(z)=0$, there exists $z_0=z_0(\mathfrak{Z}_1)\in
\bigl(0, \, \frac 12 \bigr]$ such that for any $z\in (0, \, z_0]$
\begin{align}
\label{phi} \varphi(z-2\overline{c}\varphi(z))\ge
\frac{\varphi(z)}{2},\quad \varphi(z +2 \overline{c}\varphi(z))\le
2\varphi(z),\quad |\varphi'(z)|\le \frac {1}{4\overline{c}}.
\end{align}

Let us estimate
$$
{\rm card}\, \{i\in \overline{1, \,
k}\backslash\{j\}:\; G_{(i)}\cap G_{(j)}\ne \varnothing\}, \quad
j\in \overline{1, \, k}.
$$
Since the sets $G_{(i)}$ are open, the condition $G_{(i)}\cap
G_{(j)}\ne \varnothing$ is equivalent to the inequality ${\rm
mes}\, \left(G_{(i)}\cap G_{(j)}\right)>0$. Notice that if
$\tau_{i-1}\ge z_0$, then $\tau_{i-1}-\tau_i\ge \hat c
\varphi(\tau_{i-1}) \underset{\mathfrak{Z}_1,\hat c}{\asymp} 1$.
Hence,
$${\rm card}\{i\in \overline{1, \, k}:\tau_{i-1}\ge
z_0\}\underset{ \mathfrak{Z}_1, \hat c}{\lesssim} 1.
$$
Therefore, it is sufficient to estimate
$$
{\rm card}\, \{i\in \overline{1, \, k}\backslash\{j\}:\;
G_{(i)}\cap G_{(j)}\ne \varnothing, \; \tau_{i-1}<z_0\}.
$$
If $G_{(i)}\cap G_{(j)}\ne \varnothing$, then there exist $z\in
[\tau_j, \, \tau_{j-1}]$, $t\in [\tau_i, \, \tau_{i-1}]$ such that
\begin{align}
\label{brz_brt} B_{\overline{R}(z)}(\eta_z)\cap
B_{\overline{R}(t)}(\eta_t)\ne \varnothing.
\end{align}

Let $i<j$. Then from (\ref{brz_brt}) it follows that
$z+\overline{R}(z)\ge t-\overline{R}(t)$. Therefore,
\begin{align}
\label{ttj1_tz_rz_rt} t-\tau_{j-1}\le t-z\le
\overline{R}(z)+\overline{R}(t) \stackrel{(\ref{opr2phi})}{\le}
\overline{c} \varphi(z)+ \overline{c}\varphi(t) \le 2\overline{c}
\varphi(t),
\end{align}
i.e., $t-2\overline{c} \varphi(t) \le \tau_{j-1}$. By the
monotonicity of the function $\varphi$ and the inequality $t\le
\tau_{i-1} \le z_0$, we have $\frac{\varphi(t)}{2}
\stackrel{(\ref{phi})}{\le} \varphi(t-2\overline{c}\varphi(t)) \le
\varphi(\tau_{j-1})$. Applying (\ref{ttj1_tz_rz_rt}) once again,
we get $\tau_i-\tau_{j-1}\le 2\overline{c} \varphi(t)\le
4\overline{c} \varphi(\tau_{j-1})$. On the other hand,
$\tau_{i-1}-\tau_i\stackrel{(\ref{tj1})}{\ge} \hat c \varphi
(\tau_{i-1})\ge \hat c \varphi(\tau_{j-1})$. This yields the
desired estimate.

Let, now, $i>j$. Then from (\ref{brz_brt}) it follows that
\begin{align}
\label{brz_brt1} t+\overline{R}(t)\ge z-\overline{R}(z).
\end{align}
Let $z\ge z_0$. Then
$$
(1+\kappa _\Omega)\tau_{i-1}\ge (1+\kappa _\Omega)t
\stackrel{(\ref{opr2k})}{\ge} t+ \overline{R}
(t)\stackrel{(\ref{brz_brt1})}{\ge}
z-\overline{R}(z)\stackrel{(\ref{opr2k})}{\ge} (1-\kappa _\Omega)
z\ge (1-\kappa _\Omega)z_0.
$$
Hence, $\tau_{i-1} \underset{\mathfrak{Z}_1} {\gtrsim} 1$,
$\varphi(\tau_{i-1})\underset{\mathfrak{Z}_1} {\gtrsim} 1$,
$\tau_{i-1}-\tau_i
\stackrel{(\ref{tj1})}{\underset{\mathfrak{Z}_1,\hat c}{\asymp}}
1$ and
$$
{\rm card}\{i>j:\, G_{(i)}\cap G_{(j)}\ne \varnothing\}\underset{
\mathfrak{Z}_1, \hat c}{\lesssim} 1.
$$

Let $z<z_0$. From (\ref{brz_brt1}) it follows that
\begin{align}
\label{zrz} z-\overline{R}(z)\le
t+\overline{R}(t)\stackrel{(\ref{opr2phi})}{\le} t+
\overline{c}\varphi(t)\le \tau_{i-1}+ \overline{c} \varphi(z).
\end{align}
Set
\begin{align}
\label{til_z} \tilde z=\left\{ \begin{array}{l} z, \quad \mbox{if}
\quad z-\overline{R}(z)\le\tau_j-\overline{R}({\tau_j}), \\
\tau_j, \quad \mbox{otherwise}. \end{array}\right.
\end{align}
Then
\begin{align}
\label{zz_til_z} \tilde z-\overline{R}(\tilde z) \le \tau_{i-1}
+\overline{c} \varphi(\tilde z).
\end{align}
Indeed, if $\tilde z=z$, then it follows from (\ref{zrz}). If
$\tilde z=\tau_j$, then by (\ref{zrz}), (\ref{til_z}) and the
inequalities $t\le \tau_{i-1}\le \tau_j$ we get $$\tilde
z-\overline{R}(\tilde z)\le z-\overline{R}(z) \le t+
\overline{c}\varphi(t)\le \tau_{i-1}+ \overline{c}
\varphi(\tau_j)=\tau_{i-1} +\overline{c} \varphi(\tilde z).$$

Estimate $\tilde z-\tau_j$ from above. Taking into account the
condition $\tilde z\le z<z_0$, we get
$$
\tilde z-\tau_j\le \overline{R}(\tilde z)- \overline{R}({\tau_j})
\stackrel{(\ref{opr2phi})} {\le} \overline{c} \varphi(\tilde
z)-\underline{c}\varphi(\tau_j)=$$$$=(\overline{c}-\underline{c})
\varphi(\tau_j)+\overline{c}\int \limits_{\tau_j}^{\tilde z}
\varphi'(s)\, ds \stackrel{(\ref{phi})}{\le}
(\overline{c}-\underline{c}) \varphi(\tau_j)+\frac{\tilde
z-\tau_j}{4},
$$
which implies
\begin{align}
\label{ztj} \tilde z-\tau_j \le 2(\overline{c}-\underline{c})
\varphi(\tau_j).
\end{align}

From (\ref{zz_til_z}) we obtain that
\begin{align}
\label{zti1_} \tilde z-\tau_{i-1}\le \overline{R}(\tilde
z)+\overline{c}\varphi(\tilde z) \stackrel{(\ref{opr2phi})}{\le}
2\overline{c}\varphi(\tilde z).
\end{align}
Therefore, $\varphi(\tau_{i-1})\ge \varphi(\tilde
z-2\overline{c}\varphi(\tilde z)) \stackrel{(\ref{phi})}{\ge}
\frac{\varphi(\tilde z)}{2}$. Consequently,
$$\tau_{i-1}-\tau_i\stackrel{(\ref{tj1})}{\ge} \hat c\varphi(\tau_{i-1})\ge \frac{\hat
c}{2}\varphi(\tilde z) \ge \frac{\hat c}{2} \varphi(\tau_j).$$ On
the other hand, $\tau_j-\tau_{i-1}\le \tilde z-\tau_{i-1}
\stackrel{(\ref{zti1_})} {\le} 2 \overline{c} \varphi(\tilde z)
\stackrel{ (\ref{phi}),(\ref{ztj})}{\le} 4\overline{c}
\varphi(\tau_j)$. This yields the desired estimate.
\end{proof}

\renewcommand{\proofname}{\bf Proof of Theorem \ref{emb_trm_peak}}
\begin{proof}
The arguments are almost the same as in \cite{mazya_poborchii}.
Here we give the sketch of the proof.

In order to obtain the lower estimate, we take functions
$$\psi_f(y, \, z)=\int \limits _z^{\tau_+-R}
(t-z)^{r-1}g_0(t)f(t)\, dt,$$ where $f$ is such that
$\|f_*\|_{L_p(\Omega)}=1$ for $f_*(y, \, z)=f(z)$. By Theorem
\ref{step_trm},
$$
\sup _{\|f_*\|_{L_p(\Omega)}=1} \|\psi_f\|_{L_{q,v}(\Omega)}
\underset{\mathfrak{Z}_1}{\gtrsim} \max \{A_{0,[\tau_-, \,
\tau_+-R]},\, A_{1,[\tau_-, \, \tau_+-R]}\}.
$$
Applying (\ref{go}), the fact that $\varphi$ is Lipschitz, the
inequalities $R\le \overline{R}_{\tau_+}\stackrel
{(\ref{opr2phi})}{\le} \overline{c} \varphi(\tau_+)$ and
$$
\tau_+-R-\tau_-=\tau_+-\underline{R}_{\tau_+}-\tau_-+
(\underline{R}_{\tau_+}-R) \stackrel{(\ref{tau_mt})}{\ge}
\underline{R}_{\tau_+}-R \stackrel{(\ref{tau_mt})}{=}
\left(\frac{1}{\lambda}-1\right)R,
$$
we get that
$$
\max \{A_{0,[\tau_-, \, \tau_+-R]},\, A_{1,[\tau_-, \,
\tau_+-R]}\} \underset{\mathfrak{Z}_1,\lambda}{\gtrsim} \max
\{A_{0,[\tau_-, \, \tau_+]},\, A_{1,[\tau_-, \, \tau_+]}\}.
$$

Prove the upper estimate. By Lemma 2.2 from
\cite{mazya_poborchii}, we may assume that
\begin{align}
\label{cr} \varphi\in C^r(0, \, 1),\quad |\varphi^{(k)}(z)|\le c
\varphi(z)^{1-k}, \quad k\in \overline{1, \, r}, \quad z\in (0, \,
1)
\end{align}
for some $c>0$ not depending on $z$. From (\ref{dgz}) and
(\ref{rz}) it follows that there is $c_*=c_*(\mathfrak{Z}_1)>0$
such that ${\rm dist}(0, \, \partial G_z) \ge  \underline{R}_z\ge
c_*\varphi(z)$. Applying some homothetic transformation of $y$, we
may assume that $\frac{\lambda c_*}{2}=1$. Let $K\in
C^\infty_0(B^{d-1})$, $\int \limits_{B^{d-1}} K(y)\, dy=1$, $\int
\limits _{B^{d-1}}K(y)y^\mu \, dy=0$, $\mu\in \Z_+^{d-1}$, $1\le
|\mu| \le r-1$,
$$
u_\mu(z)=\varphi(z)^{1-d}\int \limits _{|y|<\varphi(z)}
K\left(\frac{y}{\varphi(z)}\right)\frac{\partial
^{|\mu|}u}{\partial y^\mu}(y, \, z)\, dy, \quad z\in (0, \, 1),
\quad u\in W^r_p(\Omega),
$$
$$
Q(x)=\sum \limits _{|\mu|\le r-1} u_\mu(z)\frac{y^\mu}{\mu!},
\quad x=(y, \, z)\in \Omega
$$
(see \cite[page 112]{mazya_poborchii}).

In \cite[Lemma 3.1]{mazya_poborchii} it is proved that if
(\ref{cr}) holds, then the function $u_\mu$ is absolutely
continuous, as well as its derivatives of order not exceeding
$r-1$, $u_\mu^{(r)}\in L_p^{{\rm loc}}(0, \, 1)$, and for
$r-|\mu|\le s\le r$, $z\in (0, \, 1)$
\begin{align}
\label{uas} |u_\mu^{(s)}(z)|\underset{\mathfrak{Z}_1}{\lesssim}
\varphi(z) ^{r-|\mu|-s-\frac{d-1}{p}} \left(\int \limits
_{|y|<\varphi(z)}|\nabla^r u(y, \, z)|^p\, dy\right)^{\frac 1p}.
\end{align}

Let $\mu\in \Z_+^{d-1}$, $|\mu|\le r-1$, $Q_\mu(x)=
u_\mu(z)y^\mu$. Estimate $\|Q_\mu\| _{L_{q,v}(\Omega_{[\tau_-, \,
\tau_+]})}$ from above. Since $u_\mu(z)=0$ in some neighborhood of
$\tau_+$ (see (\ref{wwr_pg})), we have
$$
u_\mu(z)=\frac{(-1)^r}{(r-1)!}\int \limits_z^{\tau_+} (t-z)^{r-1}
u_\mu^{(r)}(t)\, dt, \quad z\in [\tau_-, \, \tau_+].
$$
Applying (\ref{uas}) with $s=r$, we obtain
$$
\|Q_\mu\|_{L_{q,v}(\Omega_{[\tau_-,\, \tau_+]})}
\underset{\mathfrak{Z}_1}{\lesssim} \left(\int \limits
_{\tau_-}^{\tau_+} v_0^q(z)|u_\mu(z)|^q\varphi(z)^{q|\mu|+d-1}\,
dz\right)^{1/q}\le
$$
$$
\le C_{\tau_-, \, \tau_+} \left(\int \limits_{\tau_-}^{\tau_+}
\left| \frac{u_\mu^{(r)}(z)}{g_0(z)}\right|^p \varphi(z)^{p|\mu|
+d-1}\, dz\right)^{\frac 1p}\stackrel{(\ref{uas})}
{\underset{\mathfrak{Z}_1}{\lesssim}} C_{\tau_-, \,
\tau_+}\left(\int \limits_{\tau_-}^{\tau_+} \int
\limits_{|y|<\varphi(z)} \left|\frac{\nabla^r u(y, \, z)}{g(y, \,
z)}\right|^p\, dy\, dz\right)^{\frac 1p}.
$$
The value $C_{\tau_-, \, \tau_+}$ is estimated by applying Theorem
\ref{step_trm}. Since $\varphi$ is non-decreasing, it gives the
desired estimate (see the proof in \cite[page
113]{mazya_poborchii}).

Let us estimate $\|u-Q\|_{L_{q,v}(\Omega_{[\tau_-, \, \tau_+]})}$.
Set $z_0=\tau_+$, $z_{k+1}+ \varphi(z_{k+1}) =z_k$, $k\in \Z_+$,
$k_*=\min\{k\in \Z_+:\, z_{k+1}<\tau_-\}$, $\hat
z_{k+1}=\max\{z_{k+1}, \, \tau_-\}$, $\Omega_{(k)}=\cup_{z\in
[\hat z_{k+1}, \, z_k)}$. By Lemma \ref{un_fca}, there is
$b=b(\mathfrak{Z}_1)>0$ such that $\Omega_{(k)}\in {\bf FC}(b)$,
$k\in \Z_+$. Set $g_{(k)}=g_0(z_k)$, $v_{(k)}=v_0(z_k)$. By
(\ref{go}), $g(x)\underset{\mathfrak{Z}_1}{\asymp} g_{(k)}$,
$v(x)\underset{\mathfrak{Z}_1}{\asymp} v_{(k)}$ for any $x \in
\Omega_{(k)}$. By Theorem \ref{resh_teor} and (\ref{opr2phi}),
$$
\|u\|_{L_q(\Omega_{(k)})}\underset{\mathfrak{Z}_1}{\lesssim}
\varphi(z_k)^{\frac dq-\frac dp}
\left(\|u\|_{L_p(\Omega_{(k)})}+\varphi(z_k)^r \|\nabla^r u\|
_{L_p(\Omega_{(k)})}\right).
$$
Repeating arguments in \cite[page 114]{mazya_poborchii}, we can
prove that
$$
\|u-Q\|_{L_q(\Omega_{(k)})}\underset{\mathfrak{Z}_1}{\lesssim}
\varphi(z_k)^{r+\frac dq-\frac dp}\|\nabla^r
u\|_{L_p(\Omega_{(k)})}.
$$
By construction, $z_k-z_{k+1}=\varphi(z_{k+1})
\underset{\mathfrak{Z}_1}{\asymp} \varphi(z_k)$. Therefore, from
Lemma \ref{kratn_pokr} it follows that
$$
\|u-Q\|_{L_{q,v}(\Omega_{[\tau_-, \, \tau_+]})}
\underset{\mathfrak{Z}_1}{\lesssim} \max_{0\le k\le k_*}
g_{(k)}v_{(k)}\varphi(z_k)^{r+\frac dq-\frac
dp}\left\|\frac{\nabla^r u}{g}\right\|_{L_p(\Omega_{[\tau_-, \,
\tau_+]})} \underset{\mathfrak{Z}_1}{\lesssim}
$$
$$
\lesssim \sup _{t\in [\tau_-, \, \tau_+]} A_{1,[\tau_-, \,
\tau_+]}(t)\left\|\frac{\nabla^r
u}{g}\right\|_{L_p(\Omega_{[\tau_-, \, \tau_+]})}
$$
(the last relation is similar to the inequality (4.7) in
\cite{mazya_poborchii}; it follows from the monotonicity of
$\varphi$).
\end{proof}
\renewcommand{\proofname}{\bf Proof}

Denote by ${\cal P}_{r-1}(\R^d)$ the space of polynomials on
$\R^d$ of degree not exceeding $r-1$. For each measurable subset
$E\subset \R^d$ we put
$${\cal P}_{r-1}(E)= \{f|_E:\, f\in {\cal P}_{r-1}(\R^d)\}.$$

\begin{Cor}
\label{pol_appr} Let $\Omega \in {\bf FC}_\varphi(a)$,
$0<\lambda_*<1$, $0\le \tau_-<\tau_+\le \frac 12$, and let
\begin{align}
\label{ttttt} \tau_-<\tau_+-\lambda_*\varphi(\tau_+).
\end{align}
Suppose that $A_{[\tau_-, \, \tau_+]}<\infty$. Then
$W^r_{p,g}(\Omega_{[\tau_-, \, \tau_+]}) \subset
L_{q,v}(\Omega_{[\tau_-, \, \tau_+]})$ and there exists a linear
continuous operator
$$P:L_{q,v}(\Omega_{[\tau_-, \, \tau_+]}) \rightarrow {\cal
P}_{r-1}(\Omega_{[\tau_-, \, \tau_+]})$$ such that for any
function $f\in W^r_{p,g}(\Omega_{[\tau_-, \, \tau_+]})$
\begin{align}
\label{ap_pol} \|f-Pf\|_{L_{q,v}(\Omega_{[\tau_-, \, \tau_+]})}
\underset{\mathfrak{Z}_1}{\lesssim} A_{[\tau_-, \, \tau_+]}
\left\|\frac{\nabla^r f}{g}\right\| _{L_p(\Omega_{[\tau_-, \,
\tau_+]})}.
\end{align}
\end{Cor}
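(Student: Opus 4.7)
The plan is to reduce Corollary \ref{pol_appr} to Theorem \ref{emb_trm_peak} by subtracting from $f$ a polynomial approximation near the tip $\eta_{\tau_+}$ of the peak and controlling the residual via a cutoff. First choose $\lambda=\lambda(\lambda_*,\mathfrak{Z}_1)\in(0,1)$ small enough that, setting $R:=\lambda\underline{R}_{\tau_+}$, one has $\tau_+-R>\tau_-$; this is possible because $\underline{R}_{\tau_+}\underset{\mathfrak{Z}_1}{\asymp}\varphi(\tau_+)$ by (\ref{rz}) and (\ref{opr2phi}), while $\tau_+-\tau_->\lambda_*\varphi(\tau_+)$ by (\ref{ttttt}). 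Write ${\cal B}:=B_R(\eta_{\tau_+})$ and let $P$ be the averaged Taylor projection based on a ball of radius $\asymp R$ contained in ${\cal B}\cap\Omega_{[\tau_-,\tau_+]}$: $P$ is linear, maps into ${\cal P}_{r-1}$, equals the identity on ${\cal P}_{r-1}$, and satisfies the Bramble--Hilbert inequality
$$
\|\nabla^\alpha(f-Pf)\|_{L_p({\cal B}\cap\Omega)}\underset{\mathfrak{Z}_1}{\lesssim}R^{r-|\alpha|}\|\nabla^r f\|_{L_p({\cal B}\cap\Omega)},\quad 0\le|\alpha|\le r-1.
$$
Since $v\asymp v_0(\tau_+)$ on ${\cal B}\cap\Omega$ by Lemma \ref{3_prop}, $P$ extends continuously from $L_{q,v}(\Omega_{[\tau_-,\tau_+]})$ into ${\cal P}_{r-1}(\Omega_{[\tau_-,\tau_+]})$.

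Pick $\chi\in C_0^\infty(\R^d)$ with $\chi\equiv 1$ on $B_{R/2}(\eta_{\tau_+})$, $\supp\chi\subset{\cal B}$, and $|\nabla^k\chi|\underset{k}{\lesssim}R^{-k}$. Put $h:=f-Pf$ and $\tilde h:=(1-\chi)h$. Then $\tilde h$ vanishes on $B_{R/2}(\eta_{\tau_+})\cap\Omega_{[\tau_-,\tau_+]}$, so $\tilde h\in W^r_{p,g}(\Omega_{[\tau_-,\tau_+]},\Gamma_0^{R/2})$ and Theorem \ref{emb_trm_peak} gives
$$
\|\tilde h\|_{L_{q,v}(\Omega_{[\tau_-,\tau_+]})}\underset{\mathfrak{Z}_1}{\lesssim}A_{[\tau_-,\tau_+]}\bigl\|\tfrac{\nabla^r\tilde h}{g}\bigr\|_{L_p(\Omega_{[\tau_-,\tau_+]})}.
$$
Expanding $\nabla^r\tilde h$ by Leibniz, the term $(1-\chi)\nabla^r f$ is bounded trivially, while each remaining term $\nabla^\beta\chi\cdot\nabla^{r-|\beta|}h$ with $1\le|\beta|\le r$ is supported in ${\cal B}$ and has $L_p$-norm controlled by $R^{-|\beta|}\cdot R^{|\beta|}\|\nabla^r f\|_{L_p({\cal B})}=\|\nabla^r f\|_{L_p({\cal B})}$ via the Bramble--Hilbert bound above. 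Since $g\asymp g_0(\tau_+)\asymp g_0(z)$ on ${\cal B}\cap\Omega_{[\tau_-,\tau_+]}$ by Lemma \ref{3_prop}, we conclude $\|\nabla^r\tilde h/g\|_{L_p(\Omega_{[\tau_-,\tau_+]})}\underset{\mathfrak{Z}_1}{\lesssim}\|\nabla^r f/g\|_{L_p(\Omega_{[\tau_-,\tau_+]})}$.

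It remains to bound $\|\chi h\|_{L_{q,v}({\cal B})}\le\|h\|_{L_{q,v}({\cal B}\cap\Omega)}$. By Lemma \ref{un_fca}, $\Omega_{[\tau_+-R,\tau_+]}\in{\bf FC}(b)$ for some $b=b(\mathfrak{Z}_1)$; applying Theorem \ref{resh_teor} there together with the Bramble--Hilbert bound (so that the Reshetnyak polynomial may be absorbed into $P$) yields $\|h\|_{L_q({\cal B}\cap\Omega)}\underset{\mathfrak{Z}_1}{\lesssim}R^\delta\|\nabla^r f\|_{L_p({\cal B}\cap\Omega)}$. Combining with $v\asymp v_0(\tau_+)$ and $g\asymp g_0(\tau_+)$ on ${\cal B}$, we obtain
$$
\|\chi h\|_{L_{q,v}({\cal B})}\underset{\mathfrak{Z}_1}{\lesssim}v_0(\tau_+)g_0(\tau_+)R^\delta\bigl\|\tfrac{\nabla^r f}{g}\bigr\|_{L_p(\Omega_{[\tau_-,\tau_+]})}.
$$
A direct calculation, taking $t=\tau_+-R/2$ in the supremum defining $A_{0,[\tau_-,\tau_+]}$ and using $R\asymp\varphi(\tau_+)$ together with Lemma \ref{3_prop}, gives $v_0(\tau_+)g_0(\tau_+)R^\delta\underset{\mathfrak{Z}_1}{\lesssim}A_{[\tau_-,\tau_+]}$. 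Summing the two estimates yields (\ref{ap_pol}); the inclusion $W^r_{p,g}\subset L_{q,v}$ follows since the coefficients of $Pf$ depend linearly and boundedly on $f$. The main obstacle is the bookkeeping of the cutoff argument: each derivative of $\chi$ contributes a factor $R^{-1}$ that must be exactly compensated by the Bramble--Hilbert decay of $\nabla^{r-|\beta|}h$, which forces $P$ to preserve polynomials of degree $\le r-1$ at \emph{every} derivative order, justifying the choice of the averaged Taylor polynomial over simpler projections.
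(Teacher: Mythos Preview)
Your overall strategy---subtract a polynomial localized near $\eta_{\tau_+}$, use a cutoff, apply Theorem~\ref{emb_trm_peak} to the piece vanishing on the small ball, and estimate the remaining piece by a local Sobolev/Reshetnyak bound---is exactly the one the paper uses. The paper takes the $L_2$-orthogonal projection onto ${\cal P}_{r-1}$ on the ball $B_{\underline{R}_{\tau_+}}(\eta_{\tau_+})$ together with the Sobolev estimate (\ref{qk}) in place of your averaged Taylor polynomial and Bramble--Hilbert, but that difference is cosmetic.

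There is, however, a genuine gap in your reduction to Theorem~\ref{emb_trm_peak}. You choose $\lambda$ small so that $R=\lambda\underline{R}_{\tau_+}$ satisfies $\tau_+-R>\tau_-$, and then invoke Theorem~\ref{emb_trm_peak} for $\tilde h\in W^r_{p,g}(\Omega_{[\tau_-,\tau_+]},\Gamma_0^{R/2})$. But hypothesis (\ref{tau_mt}) of Theorem~\ref{emb_trm_peak} is $\tau_-<\tau_+-\underline{R}_{\tau_+}$, \emph{independently} of $\lambda$; shrinking $\lambda$ does not help. Since $\underline{R}_{\tau_+}\underset{\mathfrak{Z}_1}{\asymp}\varphi(\tau_+)$ while the only assumption is $\tau_+-\tau_->\lambda_*\varphi(\tau_+)$ with $\lambda_*$ possibly small, the range $\tau_+-\underline{R}_{\tau_+}\le\tau_-<\tau_+-\lambda_*\varphi(\tau_+)$ is nonempty in general, and in that range your appeal to Theorem~\ref{emb_trm_peak} is not justified.

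The fix is the paper's: split into two cases. If $\tau_-\ge\tau_+-\underline{R}_{\tau_+}$, one shows $\tau_+-\tau_-\underset{\mathfrak{Z}_1}{\lesssim}\varphi(\tau_-)$, so by Lemma~\ref{un_fca} the whole domain $\Omega_{[\tau_-,\tau_+]}$ lies in ${\bf FC}(\tilde b)$ with $\tilde b\underset{\mathfrak{Z}_1}{\asymp}1$, and (\ref{ap_pol}) follows directly from Theorem~\ref{resh_teor}, Lemma~\ref{3_prop}, and the definition of $A_{[\tau_-,\tau_+]}$. Only in the complementary case $\tau_-<\tau_+-\underline{R}_{\tau_+}$ does one run the cutoff argument you wrote, and there (\ref{tau_mt}) is satisfied with any fixed $\lambda\in(0,1)$.
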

\begin{proof}
Let $\tau_-\ge\tau_+-\underline{R}_{\tau_+}$. Then $\tau_+ -\tau_-
\stackrel{(\ref{rrr}),(\ref{opr2phi})}{\le} c_0\varphi(\tau_+)
\underset{\mathfrak{Z}_1}{\lesssim} \varphi(\tau_-)$ (here
$c_0=c_0(\mathfrak{Z}_1)$). Prove the last inequality. Since
$\varphi'(t) \underset{t\to +0}{\to 0}$, there exists
$\hat\tau=\hat\tau(\mathfrak{Z}_1)>0$ such that for
$\tau_+<\hat\tau$ we get $\varphi(\tau_-)\ge \varphi(\tau_+-
c_0\varphi(\tau_+)) \ge \frac{\varphi(\tau_+)}{2}$. If $\tau_+
>\hat \tau$, then $\underline{R}_{\tau_+}\le \overline{R}
_{\tau_+} \stackrel{(\ref{opr2k})}{\le} \kappa_\Omega \tau_+$ and
$\tau_- \ge \frac{\tau_+}{1-\kappa_\Omega} \ge \frac{\hat \tau}
{1-\kappa_\Omega}$. Therefore, $\varphi(\tau_-) \underset
{\mathfrak{Z}_1}{\asymp} 1 \underset{\mathfrak{Z}_1}{\asymp}
\varphi(\tau_+)$.

Thus, $\tau_+ -\tau_-\underset{\mathfrak{Z}_1}{\lesssim}
\varphi(\tau_-)$. By Lemma \ref{un_fca}, $\Omega_{[\tau_-, \,
\tau_+]} \in {\bf FC}(\tilde b)$, $\tilde b
\underset{\mathfrak{Z}_1}{\asymp} 1$. It remains to apply Lemma
\ref{3_prop}, Theorem \ref{resh_teor} and the definition of
$A_{[\tau_-, \, \tau_+]}$.

Suppose that $\tau_-<\tau_+-\underline{R}_{\tau_+}$. Let $P_0:{\rm
span}\, W^r_p(B_{\underline{R}_{\tau_+}} (\eta_{\tau_+}))
\rightarrow {\cal P}_{r-1}
(B_{\underline{R}_{\tau_+}}(\eta_{\tau_+}))$ be a linear
continuous projection. Then for any $0\le k\le r$ and for each
$q_k\in [1, \, +\infty)$ such that $r-k+\frac{d}{q_k}-\frac dp>0$
we have
\begin{align}
\label{qk} \displaystyle
\begin{array}{c}
\|f-P_0f\|_{L_{q_k}(B_{\underline{R}_{\tau_+}}(\eta_{\tau_+}))}
\underset {p,q_k,r,d} {\lesssim} \underline{R}_{\tau_+}
^{r-k+\frac{d}{q_k}-\frac{d}{p}} \|\nabla^r f\|
_{L_p(B_{\underline{R}_{\tau_+}}(\eta_{\tau_+}))}
\stackrel{(\ref{opr2phi})}{\underset{\mathfrak{Z}_1}{\lesssim}} \\
\lesssim (\varphi(\tau_+))^{r-k+\frac{d}{q_k}-\frac{d}{p}}
\|\nabla^r f\| _{L_p(B_{\underline{R}_{\tau_+}}(\eta_{\tau_+}))}
\end{array}
\end{align}
(see \cite{sobolev1}).

In order to define $P_0$, we take the orthogonal projection in
$L_2(B_{\underline{R}_{\tau_+}}(\eta_{\tau_+}))$ onto the space
${\cal P}_{r-1}(B_{\underline{R}_{\tau_+}}(\eta_{\tau_+}))$,
extend it on $L_1(B_{\underline{R}_{\tau_+}}(\eta_{\tau_+}))$ by
continuity, and then restrict it to $L_{q,v}
(B_{\underline{R}_{\tau_+}} (\eta_{\tau_+}))$. The function $Pf$
is defined as the extension of the polynomial $P_0(f)$ on the
domain $\Omega_{[\tau_-, \, \tau_+]}$. Then the image of $P$ is
contained in ${\cal P}_{r-1}(\Omega_{[\tau_-, \, \tau_+]})$. From
the condition $A_{[\tau_-, \, \tau_+]}<\infty$ it follows that
$v\in L_q(\Omega)$ and the operator $P:L_{q,v}(\Omega_{[\tau_-, \,
\tau_+]})\rightarrow L_{q,v}(\Omega_{[\tau_-, \, \tau_+]})$ is
continuous.

Let $\psi_0\in C_0^\infty(\R^d)$, ${\rm supp}\, \psi_0\subset
B_1(0)$, $\psi_0|_{B_{1/2}(0)}=1$, $\psi_0(x)\in [0, \, 1]$ for
any $x\in \R^d$. Set $R=\frac{\underline{R}_{\tau_+}}{2}$,
$\psi(x)=\psi_0\left(\frac{x-\eta_{\tau_+}}{\underline{R}_{\tau_+}}\right)$.
Then ${\rm supp}\, \psi\subset B_{\underline{R}_{\tau_+}}
(\eta_{\tau_+})$, ${\rm supp}\, (1-\psi)\subset \Omega \backslash
B_R (\eta_{\tau_+})$. By (\ref{opr2k}), (\ref{opr2phi}),
(\ref{go}) and Lemma \ref{3_prop},
\begin{align}
\label{gvv} \frac{g(x)}{g(y)} \underset{\mathfrak{Z}_1}{\asymp} 1,
\quad \frac{v(x)}{v(y)} \underset{\mathfrak{Z}_1}{\asymp} 1, \quad
x, \, y\in B_{\underline{R}_{\tau_+}} (\eta_{\tau_+}).
\end{align}
Hence,
$$
\|\psi(f-Pf)\|_{L_{q,v} (\Omega_{[\tau_-, \, \tau_+]})}\le
\|f-Pf\|_{L_{q,v}(B_{\underline{R}_{\tau_+}} (\eta_{\tau_+}))}
\stackrel{(\ref{qk}),(\ref{gvv})}{\underset{\mathfrak{Z}_1}{\lesssim}}
$$
$$
\lesssim \sup _{x\in B_{\underline{R}_{\tau_+}} (\eta_{\tau_+})}
g(x)v(x) (\varphi(\tau_+))^\delta \left\|\frac{\nabla^r
f}{g}\right\| _{L_p(B_{\underline{R}_{\tau_+}}(\eta_{\tau_+}))}
\underset{\mathfrak{Z}_1}{\lesssim} A_{[\tau_-, \, \tau_+]}
\left\|\frac{\nabla^r f}{g}\right\| _{L_p(\Omega_{[\tau_-, \,
\tau_+]})}
$$
(the last inequality follows from the monotonicity of the function
$\varphi$; see the end of proof of Theorem \ref{emb_trm_peak}).
The value $\|(1-\psi)(f-Pf)\|_{L_{q,v}(\Omega_{[\tau_-, \,
\tau_+]})}$ is estimated by Theorem \ref{emb_trm_peak} (the
arguments are the same as in \cite{vas_john}).
\end{proof}

Let (\ref{g0v0}), (\ref{phi_def}), (\ref{limtrho}),
(\ref{sigma_prop}), (\ref{alpha_prop}), (\ref{rho_def}) hold. We
claim that
\begin{align}
\label{at} A_{[\tau_-, \, \tau_+]}\le A_{[0, \, \tau_+]} \underset
{\mathfrak{Z}_1} {\lesssim} |\log \tau_+|^{-\alpha}\rho(|\log
\tau_+|).
\end{align}
Indeed, the integrals in the definition of $A_{i,[0, \, \tau_+]}$
($i=0, \, 1$) can be easily estimated (to this end we employ Lemma
\ref{sum_lem} and replace $z-t$ by $z$ and $t-z$ by $t$). Then we
apply the fact that the function $t^{-\alpha}\rho(t)$ is
decreasing for large $t$.

This estimate will be employed for $\tau_-< \frac{\tau_+}{2}$. If
$\tau_-\ge \frac{\tau_+}{2}$, then
$\varphi(t)\underset{\mathfrak{Z}_1}{\asymp} \varphi(\tau_+)$,
$g_0(t)\underset{\mathfrak{Z}_1}{\asymp} g_0(\tau_+)$,
$v_0(t)\underset{\mathfrak{Z}_1}{\asymp} v_0(\tau_+)$ for any
$t\in [\tau_-, \, \tau_+]$. Therefore,
\begin{align}
\label{at1} \begin{array}{c} A_{[\tau_-, \, \tau_+]}\underset
{\mathfrak{Z}_1} {\lesssim} g_0(\tau_+)v_0(\tau_+)
[\varphi(\tau_+)]^{(d-1) \left(\frac 1q-\frac 1p\right)}
(\tau_+-\tau_-)^{r+\frac 1q-\frac 1p}=\\ =|\log
\tau_+|^{-\alpha}\rho(|\log \tau_+|) \tau_+^{-r-\frac 1q +\frac
1p}(\tau_+-\tau_-)^{r+\frac 1q-\frac 1p}. \end{array}
\end{align}

\section{Estimates of widths}
In this section, we suppose that $p\le q$.

Let $G\subset \Omega$ be a measurable set, and let $T=\{G
_i\}_{i=1}^{i_0}$ be a finite partition of $G$. Denote
\begin{align}
\label{calsrtgsgr} {\cal S}_{r,T}(G)=\{S:\Omega\rightarrow \R:
S|_{G _i}\in {\cal P}_{r-1}(G _i), \; 1\le i \le i_0, \;\;
S|_{\Omega \backslash G}=0\};
\end{align}
for each $f\in L_{q,v}(G)$ we set
\begin{align}
\label{norm_f_pqtv} \| f\| _{p,q,T,v}= \left(\sum \limits
_{i=1}^{i_0} \| f\| _{L_{q,v}(G_i)}^p\right) ^{\frac{1}{p}}.
\end{align}
By $L_{p,q,T,v}(G)$ we denote the space $L_{q,v}(G)$ equipped
with the norm $\| \cdot\| _{p,q,T,v}$. Notice that $\| f\| _{p,q,T,v}\ge
\| f\| _{L_{q,v}(G)}$.

The following assertion (in fact, a more general result for
weighted spaces) was proved in \cite{vas_john}.  For the
non-weighted case, Besov \cite{besov_peak_width} later put forward a
more simple proof.
\begin{Lem}
\label{m_part} Let $a>0$, $G\subset \R^d$, $G\in {\bf FC}(a)$,
$n\in \N$. Then there exists a family of partitions
$\{T_{m,n}(G)\}_{m\in \Z_+}$ with the following properties:
\begin{enumerate}
\item ${\rm card}\, T_{m,n}(G)\underset{a,d}{\lesssim}2^mn$;
\item for any $E\in T_{m,n}(G)$ there exists a linear continuous
operator $P_E:L_q(E)\rightarrow {\cal P}_{r-1}(E)$ such that for
any function $f\in W^r_p(G)$
\begin{align}
\label{lq_e} \|f-P_Ef\|_{L_q(E)}\underset{p,q,r,d,a}{\lesssim}
(2^mn)^{-\frac rd-\frac 1q+\frac 1p} ({\rm mes}\, G)^{\frac
rd+\frac 1q-\frac 1p} \|\nabla^r f\|_{L_p(E)};
\end{align}
\item for any $m\in \Z_+$, $E\in T_{m,n}(G)$
$$
{\rm card}\, \{E'\in T_{m+1,n}(G):\; {\rm mes}\, (E\cap
E')>0\}\underset{a,d}{\lesssim}1,
$$
$$
{\rm card}\, \{E'\in T_{m-1,n}(G):\; {\rm mes}\, (E\cap
E')>0\}\underset{a,d}{\lesssim}1, \text{ if }m\ge 1.
$$
\end{enumerate}
\end{Lem}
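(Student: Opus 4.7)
The plan is to construct the partitions $\{T_{m,n}(G)\}_{m\in\Z_+}$ recursively. At level $0$, I would partition $G$ into $\lesssim n$ John subdomains of roughly equal measure, each with a John constant $a'=a'(a,d)$ independent of $G$ and $n$. The existence of such a base partition is dictated by the chain structure guaranteed by the John condition: for each $x\in G$ the curve $\gamma_x$ together with property~(3) of Definition~\ref{fca} produces a family of balls $B_{at}(\gamma_x(t))\subset G$ along a ``spine'' of $G$. Selecting a maximal disjoint subfamily at a suitable dyadic scale and then grouping the balls into connected components of comparable volume yields the desired decomposition. This is the step analogous to a Whitney decomposition, but adapted so that pieces abutting narrow parts of $G$ shrink along the spine in a controlled way.

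Given $T_{m,n}(G)$, I would construct $T_{m+1,n}(G)$ by subdividing each $E\in T_{m,n}(G)$ into a uniformly bounded number of John subdomains of measure $\asymp ({\rm mes}\,E)/2$. This is possible because each $E$ is itself a John domain with John constant depending only on $a$ and $d$, so the same base construction can be applied at a finer scale. Property~(1) then follows by induction on $m$, and property~(3) is automatic from the nested refinement: each $E\in T_{m,n}(G)$ is split into $O(1)$ pieces of $T_{m+1,n}(G)$ and meets $O(1)$ pieces of $T_{m-1,n}(G)$.

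For the approximation property~(2), I would fix $E\in T_{m,n}(G)$ and apply Theorem~\ref{resh_teor} on $E$, which is a John domain with constant depending only on $a$ and $d$. This produces a polynomial $\tilde P_E f\in{\cal P}_{r-1}(E)$ with
$$
\|f-\tilde P_E f\|_{L_q(E)}\underset{p,q,r,d,a}{\lesssim}({\rm mes}\,E)^{\delta/d}\|\nabla^r f\|_{L_p(E)}.
$$
Since ${\rm mes}\,E\asymp ({\rm mes}\,G)/(2^m n)$ and $\delta/d=r/d+1/q-1/p$, this is exactly (\ref{lq_e}). To make $P_E$ a linear continuous operator on $L_q(E)$ rather than a mere existence statement, I would take $P_E$ to be the $L_2(B)$-orthogonal projection onto ${\cal P}_{r-1}$, where $B\subset E$ is a ball of radius $\asymp ({\rm mes}\,E)^{1/d}$ (provided by the John structure of $E$), extended to $E$, exactly as in Corollary~\ref{pol_appr}. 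The estimate is then transferred from $\tilde P_Ef$ to $P_E f$ via the triangle inequality and a standard comparison of polynomial norms on $E$ and on~$B$.

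The main obstacle is the geometric step in the first paragraph: producing a base partition $T_{0,n}(G)$ whose pieces are John subdomains with a constant depending only on $a$ and $d$. A naive partition (e.g., intersecting $G$ with a dyadic grid) destroys the John property near cusps or near the peak of~$G$, and uniform control of the new John constant is essential, since otherwise the implicit constants in (\ref{lq_e}) would deteriorate with $m$ and $n$. Once this geometric construction is carried out carefully, the inductive refinement, the overlap bounds, and the polynomial approximation are routine and rely only on Theorem~\ref{resh_teor} and the projection technique already used in Corollary~\ref{pol_appr}.
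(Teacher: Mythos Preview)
The paper does not give its own proof of this lemma; immediately before the statement it records that the result (in a more general weighted form) is proved in \cite{vas_john}, with a simpler non-weighted argument in \cite{besov_peak_width}. So there is no in-paper argument to compare against, only the referenced constructions.

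Your outline has the right shape, and you correctly isolate the only nontrivial step: producing a base partition of $G$ into $\lesssim n$ pieces that are themselves John domains with a constant $a'=a'(a,d)$, so that the recursion can run without the constants degrading. Once that is in hand, the nested refinement giving properties (1) and (3), Theorem~\ref{resh_teor} giving (\ref{lq_e}) with ${\rm mes}\,E\asymp({\rm mes}\,G)/(2^mn)$, and the $L_2$-projection trick giving a linear $P_E$ are indeed routine.

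But the description you give of the base step (``selecting a maximal disjoint subfamily at a suitable dyadic scale and then grouping the balls into connected components of comparable volume'') does not by itself yield John subdomains with a uniform constant, and your recursion rests on the claim that a John domain can be bisected into two John halves with the \emph{same} constant; neither assertion is justified, and both can fail for naive choices. In \cite{vas_john} this obstacle is resolved concretely: one takes a Whitney decomposition of $G$, uses the John condition to organize the Whitney cubes into a tree rooted at a cube near $x_*$, and forms each piece $E$ as the union of the Whitney cubes in a subtree of prescribed total measure. Such an $E$ is a John domain with constant depending only on $a$ and $d$ (the chain of cubes from any point to the subtree root widens geometrically), and refining a subtree into smaller subtrees gives the nested family $\{T_{m,n}(G)\}_m$ with property (3) automatic. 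Your plan becomes a proof once the ad hoc ball-grouping is replaced by this Whitney--tree mechanism; as written, the step you flag as the ``main obstacle'' is a genuine gap rather than a detail.
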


Let $X$, $Y$ be normed spaces, $B\subset X$, $A\in L(X, \, Y)$. Then
\begin{align}
\label{width_op} d_n(A(B), \, Y)\le \|A\|d_n(B, \, X).
\end{align}
If $A$ is an isomorphism of $X$ and $Y$, then
\begin{align}
\label{width_op_lin} \lambda_n(A(B), \, Y)\le \|A\|\lambda _n(B,
\, X), \quad  d^n(A(B), \, Y)\le \|A\|d^n(B, \, X).
\end{align}
The same inequalities hold if $Y$ is a subspace in
$X$, $B\subset Y$ and $A$ is a linear projection onto $Y$.
These assertions follow from definitions of Kolmogorov,
Gelfand and linear widths.

Denote by $\chi_E$ the indicator function of a set
$E$.

\begin{Lem}
\label{low} Let $\Omega\subset \R^d$ be a domain, let $G_1, \,
\dots, \, G_m\subset \Omega$ be pairwise non-overlapping
domains, and let $\psi_1, \, \dots, \, \psi_m\in W^r_{p,g}(\Omega)$,
$\left\|\frac{\nabla^r\psi_j}{g}\right\|_{L_p(\Omega)}=1$, ${\rm
supp} \, \psi_j\subset G_j$, $\|\psi_j\|_{L_{q,v}(G_j)}\ge M$,
$1\le j\le m$. Then
$$
\vartheta_n(W^r_{p,g}(\Omega), \, L_{q,v}(\Omega)) \ge M\cdot
\vartheta_n(B^m_p, \, l^m_q)
$$
(see notations on page \pageref{vartheta_n_def}).
\end{Lem}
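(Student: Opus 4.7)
The plan is a standard reduction to widths of finite-dimensional balls via the $\psi_j$'s, viewed as disjointly supported bumps. I will build two explicit linear operators linking $B^m_p\subset l^m_q$ with the class $W^r_{p,g}(\Omega)\subset L_{q,v}(\Omega)$, and then invoke the appropriate factorization inequality for each of the three kinds of width.

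First I define the synthesis operator $S\colon l_p^m\to L_{q,v}(\Omega)$ by $S(a)=\sum_{j=1}^m a_j\psi_j$. Because $\supp\psi_j\subset G_j$ and the $G_j$ are pairwise non-overlapping, at a.e.\ point at most one summand of $\nabla^r S(a)/g$ is nonzero, giving
$$
\Bigl\|\tfrac{\nabla^r S(a)}{g}\Bigr\|^p_{L_p(\Omega)}=\sum_{j=1}^m|a_j|^p,
$$
so $S(B^m_p)\subset W^r_{p,g}(\Omega)$. Dually, for each $j$ I pick a functional $\phi_j\in L_{q,v}(\Omega)^*$ norming $\psi_j$ and supported in $G_j$, namely a constant multiple of $\sgn(\psi_j)|\psi_j|^{q-1}v^q\chi_{G_j}$, and set $\pi(f)=(\phi_j(f))_{j=1}^m\colon L_{q,v}(\Omega)\to l_q^m$. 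By the disjointness of supports, $\|\pi f\|^q_{l_q^m}\le\sum_j\|f\|^q_{L_{q,v}(G_j)}\le\|f\|^q_{L_{q,v}(\Omega)}$, so $\|\pi\|\le 1$; furthermore $\pi\circ S=D$, where $D$ is the diagonal operator $a\mapsto(w_j a_j)_j$ with $w_j:=\|\psi_j\|_{L_{q,v}(G_j)}\ge M$.

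With $S$, $\pi$, $D$ in hand, each of the three widths reduces to the corresponding finite-dimensional width of $B^m_p$ in $l^m_q$ scaled by $M$. For the Kolmogorov case, given $L\subset L_{q,v}(\Omega)$ with $\dim L\le n$, the estimate $\|Sa-h\|_{L_{q,v}}\ge\|\pi(Sa-h)\|_{l_q^m}=\|Da-\pi h\|_{l_q^m}$ together with the inclusion $D(B^m_p)\supset MB^m_p$ (which follows from $w_j\ge M$) yields $\sup_{a\in B^m_p}\dist(Sa,L)\ge M\,d_n(B^m_p,l^m_q)$. For the Gelfand case, given $A\in L(L_{q,v}(\Omega),\R^n)$, the composite $B:=A\circ S\colon\R^m\to\R^n$ has kernel of codimension $\le n$, and on $B^m_p\cap\ker B$ the inequality $\|Sa\|_{L_{q,v}}\ge\|Da\|_{l_q^m}\ge M\|a\|_{l_q^m}$ gives the desired bound. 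For the linear case I introduce the isometry $S_0:=S\circ D^{-1}\colon l_q^m\to L_{q,v}(\Omega)$, which satisfies $\pi\circ S_0={\rm Id}$ and $S_0(MB^m_p)\subset W^r_{p,g}(\Omega)$; then for any $A\colon L_{q,v}(\Omega)\to L_{q,v}(\Omega)$ of rank $\le n$ the operator $\tilde B:=\pi A S_0$ has rank $\le n$, and
$$
\|a-\tilde Ba\|_{l_q^m}=\|\pi(S_0 a-AS_0 a)\|_{l_q^m}\le\|S_0a-AS_0a\|_{L_{q,v}},
$$
which upon restricting $a$ to $MB^m_p$ gives $M\lambda_n(B^m_p,l^m_q)\le\sup_{f\in W^r_{p,g}}\|f-Af\|_{L_{q,v}}$.

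The argument is essentially bookkeeping; the only delicate point is to choose the $\phi_j$ genuinely supported in $G_j$, so that $\pi\circ S=D$ holds \emph{exactly}, and to observe that the set $W^r_{p,g}(\Omega)$ (which is not a linear subspace but a ``Sobolev unit ball'') really contains every $S(a)$ with $a\in B^m_p$.
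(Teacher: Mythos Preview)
Your argument is correct and follows essentially the same route as the paper: both reduce to the finite-dimensional width via the synthesis map $S(a)=\sum a_j\psi_j$ together with a norm-one ``analysis'' map built from norming functionals supported in the $G_j$. The paper packages the latter as a norm-one projection $Q$ onto $X=\operatorname{span}\{\psi_j\}$ (using that every one-dimensional subspace admits such a projection) followed by the coefficient isomorphism $T\colon X\to\R^m$ with $\|T\|\le M^{-1}$, whereas you write the norming functionals $\phi_j$ explicitly and compose directly to $l_q^m$; these are the same ingredients in a slightly different order.
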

\begin{proof}
Let $X={\rm span}\, \{\psi_j\}_{j=1}^m\subset L_{q,v}(\Omega)$.
From the definition of Gelfand widths it follows that
$$
d^n(W^r_{p,g}(\Omega), \, L_{q,v}(\Omega)) \ge
d^n(W^r_{p,g}(\Omega)\cap X, \, X).
$$
Prove that
$$
d_n(W^r_{p,g}(\Omega), \, L_{q,v}(\Omega)) \ge
d_n(W^r_{p,g}(\Omega)\cap X, \, X),$$$$
\lambda_n(W^r_{p,g}(\Omega), \, L_{q,v}(\Omega)) \ge
\lambda_n(W^r_{p,g}(\Omega)\cap X, \, X).
$$
To this end we construct the projection $Q:L_{q,v}(\Omega)
\rightarrow X$ such that $\|Q\|\le 1$ and apply
(\ref{width_op}), (\ref{width_op_lin}). Let
$X_j={\rm span}\, \{\psi_j\}$. Denote by $L_{q,v}^{(j)}(G)$
the set of functions in $L_{q,v}(G)$ whose support is
contained in $G_j$. Since $\dim X_j=1$, there is a projection
$Q_j:L_{q,v}^{(j)}(G)\rightarrow X_j$, $\|Q_j\|\le 1$. Let
$Q(f)=\sum \limits_{j=1}^m Q_j(f\chi_{G_j})$. Since the set
$G_j$ do not overlap pairwise, we get $\|Q\|\le 1$.

Define the isomorphism $T:X\rightarrow \R^m$ by
$$T\left(\sum \limits_{j=1}^m c_j\psi_j\right)=(c_1, \, \dots, \,
c_m).$$ Since $\left\|\frac{\nabla^r \psi_j}{g}\right\|
_{L_p(\Omega)} =1$, $\sum \limits_{j=1}^m c_j\psi_j\in
W^r_{p,g}(\Omega)$ holds if and only if $(c_1, \, \dots, \,
c_n)\in B_p^m$. Therefore, $T(W^r_{p,g}(\Omega)\cap X)=B_p^m$.
Prove that $\|T\|_{L_{q,v}(G)\rightarrow l_q^m}\le M^{-1}$. Indeed,
if $f=\sum \limits_{j=1}^m c_j\psi_j$, then
$\|Tf\|_{l_q^m}=\left(\sum \limits_{j=1}^m |c_j|^q\right)^{1/q}$,
$$
\|f\|_{L_{q,v}(G)}=\left(\sum \limits_{j=1}^m |c_j|^q
\|\psi_j\|^q_{L_{q,v}(G_j)}\right)^{1/q}\ge M\|Tf\|_{l_q^m}.
$$
By (\ref{width_op}) and (\ref{width_op_lin}),
$$
\vartheta_n(B_p^m, \, l_q^m)=\vartheta_n(T(W^r_{p,g}(\Omega)\cap
X), \, l_q^m)\le $$$$\le\|T\|_{L_{q,v}(G)\rightarrow
l_q^m}\vartheta_n(W^r_{p,g}(\Omega)\cap X, \, X)\le M^{-1}
\vartheta_n(W^r_{p,g}(\Omega)\cap X, \, X).
$$
This implies the desired estimates.
\end{proof}


\begin{Sta}
\label{pokr_razb} Let $M>0$, $m_0\in \Z_+\cup\{+\infty\}$ and let
$\{T_m\}_{m=0}^{m_0}$ be a family of finite coverings of a domain
$G$ with the following properties:
\begin{enumerate}
\item ${\cal N}_{T_m}\le M$ for any $m\in \overline{0, \, m_0}$;
\item for any $m\in \overline{0, \, m_0}$, $E\in T_m$ there exists a linear
continuous operator $P_{E,m}:L_{q,v}(E)\rightarrow {\cal
P}_{r-1}(E)$ such that for any function $f\in W^r_{p,g}(G)$ the
inequality $\|f-P_{E,m}f\|_{L_{q,v}(E)}\le
C_m\left\|\frac{\nabla^r f}{g}\right\|_{L_p(E)}$ holds;
\item ${\rm card}\, \{E'\in T_{m\pm 1}:\, {\rm mes}
(E\cap E')>0\}\le M$ for any $E\in T_m$.
\end{enumerate}
Let $U\subset G$ be a measurable subset. Then there exists a
sequence of partitions $\{\hat T_m\}_{m=0}^{m_0}$ of the set $U$
such that
\begin{enumerate}
\item ${\rm card}\, \hat T_m\le {\rm card}\, T_m$;
\item there exists a linear continuous operator
$P_m:L_{q,v}(G) \rightarrow {\cal S}_{r,\hat T_m}(U)$ such that
for any function $f\in W^r_{p,g}(G)$ the inequality
$\|f-P_mf\|_{p,q,\hat T_m,v}\underset{M,p}{\lesssim} C_m
\left\|\frac{\nabla^r f}{g}\right\|_{L_p(G)}$ holds;
\item ${\rm card}\, \{E'\in \hat T_{m\pm 1}:\, {\rm mes}
(E\cap E')>0\}\le M$ for any $E\in \hat T_m$;
\item there are injective mappings ${\cal F}_m: \hat T_m \rightarrow T_m$ such that
for any $E\in \hat T_m$ the inclusion $E \subset {\cal F}_m(E)$
holds.
\end{enumerate}
\end{Sta}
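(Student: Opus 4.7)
The plan is to implement the standard trick of disjointifying each covering $T_m$ into a partition of $U$, then gluing the local projections $P_{E,m}$ via indicator functions. Fix an arbitrary enumeration $T_m=\{E_1^{(m)}, \ldots, E_{k_m}^{(m)}\}$ for each $m$ and set
$$
\hat E_i^{(m)} := \Bigl(E_i^{(m)} \setminus \bigcup_{j<i} E_j^{(m)}\Bigr) \cap U.
$$
Discarding Lebesgue nullsets, I collect the surviving $\hat E_i^{(m)}$ into $\hat T_m$ and put ${\cal F}_m(\hat E_i^{(m)}) := E_i^{(m)}$. Since $T_m$ covers $G\supset U$, the sets $\hat E_i^{(m)}$ are pairwise disjoint by construction and exhaust $U$ up to a nullset, so $\hat T_m$ is a partition of $U$ in the paper's sense.

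Properties 1 and 4 are then immediate: ${\rm card}\,\hat T_m \le {\rm card}\, T_m$ and $\hat E_i^{(m)} \subset E_i^{(m)} = {\cal F}_m(\hat E_i^{(m)})$, while ${\cal F}_m$ is injective because distinct surviving indices give distinct elements of $T_m$. For property 3, if $\hat E \in \hat T_m$ and $\hat E' \in \hat T_{m\pm 1}$ satisfy ${\rm mes}(\hat E \cap \hat E') > 0$, then ${\rm mes}({\cal F}_m(\hat E) \cap {\cal F}_{m\pm 1}(\hat E')) > 0$, and injectivity of ${\cal F}_{m\pm 1}$ ensures that distinct $\hat E'$ correspond to distinct elements of $T_{m\pm 1}$ overlapping ${\cal F}_m(\hat E)$; the bound $M$ is therefore inherited from the hypothesis on the original coverings.

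Next I would define the operator by
$$
P_m f := \sum_i \chi_{\hat E_i^{(m)}} \cdot P_{E_i^{(m)}, m}(f|_{E_i^{(m)}}),
$$
interpreting each polynomial $P_{E_i^{(m)}, m}(f|_{E_i^{(m)}}) \in {\cal P}_{r-1}(E_i^{(m)})$ as its unique extension to $\R^d$. Then $P_m$ is linear with image in ${\cal S}_{r,\hat T_m}(U)$, and it is continuous because the image is finite-dimensional and each constituent map is continuous. For the approximation bound, using $\hat E_i^{(m)} \subset E_i^{(m)}$ together with hypothesis 2,
$$
\|f - P_m f\|_{p,q,\hat T_m, v}^p = \sum_i \|f - P_{E_i^{(m)},m} f\|_{L_{q,v}(\hat E_i^{(m)})}^p \le \sum_i \|f - P_{E_i^{(m)},m} f\|_{L_{q,v}(E_i^{(m)})}^p \le C_m^p \sum_i \left\|\frac{\nabla^r f}{g}\right\|_{L_p(E_i^{(m)})}^p,
$$
and the multiplicity bound ${\cal N}_{T_m} \le M$ converts the last sum via $\sum_i \int_{E_i^{(m)}} |\nabla^r f/g|^p = \int_G |\nabla^r f/g|^p \cdot \#\{i: x \in E_i^{(m)}\}\, dx \le M \int_G |\nabla^r f/g|^p$, yielding the required inequality with implicit constant $M^{1/p}$.

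Nothing here is genuinely difficult; the main care is bookkeeping — preserving injectivity of ${\cal F}_m$ after discarding nullsets, and noticing that the $\ell^p$-aggregation built into $\|\cdot\|_{p,q,\hat T_m, v}$ is precisely what allows the multiplicity constant $M$ to be absorbed into the $\underset{M,p}{\lesssim}$ notation. Had the norm been $\ell^\infty$-aggregated instead, the multiplicity would be harmless for a different reason (taking a maximum), but with the given $\ell^p$-aggregation it is crucial that $M$ is a uniform constant independent of $m$.
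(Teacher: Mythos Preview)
Your proof is correct and follows exactly the same construction as the paper: disjointify each covering by setting $\hat E_{i,m}=U\cap E_{i,m}\setminus\bigcup_{j<i}E_{j,m}$, discard nullsets, define $P_mf|_{\hat E_{i,m}}=P_{E_{i,m},m}f|_{\hat E_{i,m}}$, and use the multiplicity bound ${\cal N}_{T_m}\le M$ to control the $\ell^p$-sum. You have in fact spelled out more detail than the paper's own proof, which dispatches properties 1, 3, 4 in one sentence and property 2 by simply citing the multiplicity condition.
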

\begin{proof}
Let $T_m=\{E_{i,m}\}_{i=1}^{k_m}$. Denote
$$
\hat E_{1,m}=U\cap E_{1,m}, \quad \hat E_{i,m}=U\cap
E_{i,m}\backslash \cup_{j=1}^{i-1} E_{j,m}, \quad i\ge 2,
$$
$$
\hat T_m=\{\hat E_{i,m}\}_{i\in I_m}, \quad I_m=\{i\in
\overline{1, \, k_m}:\; {\rm mes}\, \hat E_{i,m}>0\},
$$
${\cal F}_m(\hat E_{i,m})=E_{i,m}$, $i\in I_m$. Then $\hat T_m$
satisfies assertions 1, 3 and 4. Define the operator $P_m$ by
$P_mf|_{\hat E_{i,m}}=P_{E_{i,m},m}f|_{\hat E_{i,m}}$, $i\in I_m$.
Apply the condition ${\cal N}_{T_m}\le M$ and obtain assertion 2.
\end{proof}

\renewcommand{\proofname}{\bf Proof of Theorem \ref{width_est}}
\begin{proof}
{\it Proof of the upper estimate.} It suffices to consider the
case $n=2^{Nd}$, $N\in \N$. Denote
$$
D_1=\Omega _{[2^{-2}, \, 2^{-1}]}, \quad D_j= \Omega _{[2^{-j-1},
\, 2^{-j}]} \backslash \cup _{i=1}^{j-1} \Omega _{[2^{-i-1}, \,
2^{-i}]}, \quad j\ge 2.
$$

{\bf Step 1.} Let $j\in \N$, $j\ge 2$, $l\in \Z_+$. Construct the
covering $\tilde {\bf R}_{j,l}$ of $\Omega_{[2^{-j},2^{-j+1}]}$.
Without loss of generality we may assume that
\begin{align}
\label{phi_z_z} \varphi(z)\le z \quad \text{ for any } \quad z\in
(0, \, 1).
\end{align}
Choose $l_j\in \Z_+$ so that
\begin{align}
\label{2jlj1_phi_2j} 2^{-j-l_j-1}<\varphi(2^{-j})\le 2^{-j-l_j}.
\end{align}
For $0\le l\le l_j$, $1\le i\le 2^l$ we set
$\tau_{j,l}(i)=2^{-j}+i\cdot 2^{-j-l}$, $\tilde {\bf R}_{j,l}=
\{U_{i,j,l}\}_{1\le i\le 2^l}$, $U_{i,j,l} = \Omega
_{[\tau_{j,l}(i-1), \, \tau_{j,l}(i)]}$. Then ${\rm card}\, \tilde
{\bf R}_{j,l}=2^l$; by Lemma \ref{kratn_pokr},
\begin{align}
\label{kr_pokr} {\cal N}_{\tilde {\bf R}_{j,l}}
\underset{\mathfrak{Z}}{\lesssim} 1;
\end{align}
this together with the definition of $\tilde{\bf R}_{j,l}$ implies
that
$$
{\rm card}\, \{U'\in \tilde {\bf R}_{j,l\pm 1}:\; {\rm mes}\,
(U'\cap U)>0\} \underset{\mathfrak{Z}}{\lesssim} 1.
$$
By Corollary \ref{pol_appr} and (\ref{at1}), for any $i\in
\overline{1, \, 2^l}$ there exists a linear continuous operator
$P_i^{j,l}:L_{q,v}(U_{i,j,l}) \rightarrow {\cal
P}_{r-1}(U_{i,j,l})$ such that for any function $f\in
W^r_{p,g}(\Omega)$
\begin{align}
\label{a1777} \displaystyle \begin{array}{c}
\|f-P_i^{j,l}f\|_{L_{q,v}(U_{i,j,l})}
\underset{\mathfrak{Z}}{\lesssim} j^{-\alpha} \rho(j)\cdot
2^{-l\left(r+\frac 1q-\frac 1p\right)}\left\| \frac{\nabla^r
f}{g}\right\|_{L_p(U_{i,j,l})}\le \\ \le j^{-\alpha} \rho(j)\cdot
2^{-l\left(\frac rd+\frac 1q-\frac 1p\right)}\left\|
\frac{\nabla^r f}{g}\right\|_{L_p(U_{i,j,l})}.
\end{array}
\end{align}

By Proposition \ref{pokr_razb}, for any $l=\overline{0, \, l_j}$
there exist a partition ${\bf R} _{j,l}$ of $D_{j-1}$, an
injective mapping ${\cal F}_{j,l}:{\bf R} _{j,l} \rightarrow
\tilde {\bf R} _{j,l}$ and a linear continuous operator
$\mathbb{P}_{j,l}:L_{q,v}(\Omega _{[2^{-j}, \, 2^{-j+1}]})
\rightarrow {\cal S}_{r,{\bf R} _{j,l}}(D_{j-1})$ such that
\begin{align}
\label{ceij} {\rm card}\, {\bf R}_{j,l}=2^l,
\end{align}
\begin{align}
\label{fx468jhdewml_8h} {\rm card}\, \{U'\in {\bf R}_{j,l\pm 1}:\;
{\rm mes}\, (U'\cap U)>0\} \underset{\mathfrak{Z}}{\lesssim} 1
\end{align}
and for any $f\in W^r_{p,g}(\Omega _{[2^{-j}, \, 2^{-j+1}]})$
\begin{align}
\label{a1} \displaystyle \begin{array}{c}
\|f-\mathbb{P}_{j,l}f\|_{p,q,{\bf R}_{j,l},v}
\underset{\mathfrak{Z}}{\lesssim} j^{-\alpha} \rho(j)\cdot
2^{-l\left(\frac rd+\frac 1q-\frac 1p\right)}\left\|
\frac{\nabla^r f}{g}\right\|_{L_p(\Omega _{[2^{-j}, \,
2^{-j+1}]})}.
\end{array}
\end{align}

By Lemma \ref{un_fca} and (\ref{2jlj1_phi_2j}), $U_{i,j,l_j}\in
{\bf FC}(b_*)$ with $b_*=b_*(\mathfrak{Z})>0$ and
\begin{align}
\label{mes_u_ijl} {\rm mes}\, U_{i,j,l_j} \underset{\mathfrak{Z}}
{\asymp} \varphi^d(2^{-j}).
\end{align}
Hence, by Lemma \ref{m_part}, for any $l\ge l_j$ there exists a
partition $T_{l,j,i}=T_{l-l_j,1}(U_{i,j,l_j})$ such that
\begin{enumerate}
\item ${\rm card}\, T_{l,j,i}\underset{b_*,d}{\lesssim}
2^{l-l_j}$;
\item for any $E\in T_{l,j,i}$ there exists a linear continuous
operator $P_E:L_q(E) \rightarrow {\cal P}_{r-1}(E)$ such that for
any function $f\in W^r_p(\Omega)$
\begin{align}
\label{8888} \|f-P_Ef\|_{L_q(E)} \underset{p,q,r,d,b_*}{\lesssim}
2^{-(l-l_j)\left(\frac rd+\frac 1q-\frac 1p\right)} ({\rm mes}\,
U_{i,j,l_j})^{\frac rd+\frac 1q-\frac 1p} \|\nabla^r f\|_{L_p(E)};
\end{align}
\item for any $l\ge l_j$, $G\in T_{l,j,i}$
$$
{\rm card}\, \{G'\in T_{l+1,j,i}:\; {\rm mes}\, (G\cap
G')>0\}\underset{b_*,d}{\lesssim}1,
$$
$$
{\rm card}\, \{G'\in T_{l-1,j,i}:\; {\rm mes}\, (G\cap
G')>0\}\underset{b_*,d}{\lesssim}1, \text{ if }l\ge l_j+1.
$$
\end{enumerate}
From (\ref{g0v0}),  (\ref{phi_def}), (\ref{limtrho}),
(\ref{sigma_prop}), (\ref{alpha_prop}), (\ref{rho_def}),
(\ref{2jlj1_phi_2j}), (\ref{mes_u_ijl}), (\ref{8888}) and the
inequality $2^{\frac{\delta}{d}l_j} \le 2^{\left(r+\frac 1q-\frac
1p\right)l_j}$ it follows that for any $E\in T_{l,j,i}$
\begin{align}
\label{9999} \|f-P_Ef\|_{L_{q,v}(E)}
\underset{\mathfrak{Z}}{\lesssim} j^{-\alpha}\rho(j)\cdot
2^{-l\left(\frac rd+\frac 1q-\frac 1p\right)}\left\|\frac{\nabla^r
f}{g}\right\|_{L_p(E)}.
\end{align}

Let $D\in {\bf R}_{j,l_j}$. Denote by $i(D)$ the number $i$ such
that $U_{i,j,l_j} ={\cal F}_{j,l}(D)$. For $l>l_j$ we set
\begin{align}
\label{r_jl} {\bf R}_{j,l}=\{D\cap E: \quad D\in {\bf R}_{j,l_j},
\;\; E\in T_{l,j,i(D)}\}.
\end{align}
Then ${\bf R}_{j,l}$ is a partition of $D_{j-1}$. From
(\ref{ceij}) and property 1 of the partition $T_{l,j,i}$ it
follows that
\begin{align}
\label{zzz} {\rm card}\, {\bf R}_{j,l}\underset{b_*,d}{\lesssim}
2^l.
\end{align}
Prove that
\begin{align}
\label{ooo} {\rm card}\, \{U'\in {\bf R}_{j,l\pm 1}:\; {\rm mes}\,
(U'\cap U)>0\} \underset{\mathfrak{Z}}{\lesssim} 1.
\end{align}
Indeed, let $D, \, D'\in {\bf R}_{j,l_j}$, $E\in T_{l,j,i(D)}$,
$E' \in T_{l\pm 1,j,i(D')}$, ${\rm mes}\, (D\cap E\cap D'\cap
E')>0$. Since ${\bf R}_{j,l_j}$ is a partition, we get $D'=D$. It
remains to apply property 3 of the partition $T_{l,j,i}$.

Let $f\in L_{q,v}(\Omega _{[2^{-j}, \, 2^{-j+1}]})$. Set
$\mathbb{P}_{j,l}f|_{D\cap E} =P_Ef|_{D\cap E}$ with $D$, $E$ from
(\ref{r_jl}) and $P_E$ from property 2. Denote $C_{j,l} =
j^{-\alpha}\rho(j)\cdot 2^{-l\left(\frac rd+\frac 1q-\frac
1p\right)}$. Then $\mathbb{P}_{j,l}:L_{q,v}(\Omega _{[2^{-j}, \,
2^{-j+1}]}) \rightarrow {\cal S}_{r,{\bf R}_{j,l}}(D_{j-1})$ is a
linear continuous operator and for any function $f\in
W^r_{p,g}(\Omega _{[2^{-j}, \, 2^{-j+1}]})$
$$
\|f-\mathbb{P}_{j,l}f\|_{p,q,{\bf R}_{j,l},v} =\left( \sum \limits
_{D\in {\bf R}_{j,l_j}} \sum \limits _{E\in T_{l,j,i(D)}}
\|f-P_Ef\|^p _{L_{q,v}(D\cap E)}\right)^{\frac 1p}
\stackrel{(\ref{9999})}{\underset{\mathfrak{Z}}{\lesssim}}
$$
$$
\lesssim C_{j,l}\left( \sum \limits _{D\in {\bf R}_{j,l_j}} \sum
\limits _{E\in T_{l,j,i(D)}} \left\| \frac{ \nabla ^r
f}{g}\right\|^p_{L_p(E)}\right)^{\frac 1p}=C_{j,l}\left( \sum
\limits _{D\in {\bf R}_{j,l_j}} \left\| \frac{ \nabla ^r
f}{g}\right\|^p_{L_p(U_{i(D),j,l_j})}\right)^{\frac 1p}\le
$$
$$
\le C_{j,l}\left( \sum \limits _{i=1}^{l_j} \left\| \frac{ \nabla
^r f}{g}\right\|^p_{L_p(U_{i,j,l_j})}\right)^{\frac 1p}
\stackrel{(\ref{kr_pokr})}{\underset{\mathfrak{Z}}{\lesssim}}
C_{j,l}\left\| \frac{ \nabla ^r f}{g}\right\|^p_{L_p(\Omega
_{[2^{-j}, \, 2^{-j+1}]})}
$$
(here we used that ${\cal F}_{j,l_j}$ is an injection). Thus,
\begin{align} \label{a2}
\|f-\mathbb{P}_{j,l}f\|_{p,q,{\bf R}_{j,l},v}
\underset{\mathfrak{Z}}{\lesssim} j^{-\alpha}\rho(j)\cdot
2^{-l\left(\frac rd+\frac 1q-\frac 1p\right)}\left\|\frac{\nabla^r
f}{g}\right\|_{L_p(\Omega _{[2^{-j}, \, 2^{-j+1}]})}.
\end{align}

{\bf Step 2.} For $0\le t\le Nd$ we set
$G_t:=\Omega_{[2^{-2^{t+1}},2^{-2^t}]}$, $$\hat G_0=G_0, \quad
\hat G_t =G_t \backslash \cup_{s=0}^{t-1} G_s.$$ Then
$\cup_{t=0}^\infty G_t=\cup_{t=0}^\infty \hat G_t=\Omega$. The
family of domains $\{\Omega_{[2^{-j}, \, 2^{-j+1}]}\}_{2^t+1\le
j\le 2^{t+1}}$ forms a covering $Q_{(t)}$ of the set $G_t$. By
Lemma \ref{kratn_pokr},
\begin{align}
\label{nqt_z_1} {\cal N} _{Q_{(t)}} \underset {\mathfrak{Z}}
{\lesssim} 1.
\end{align}
The family $\{D_{j-1}\}_{2^t+1\le j\le 2^{t+1}}$ forms a partition
of $\hat G_t$.

Let $t_*=t_*(N)=0$ or $t_*=t_*(N)=Nd$ for each $N$, and let
$\varepsilon>0$. The choice of $\varepsilon$ and $t_*$ (both
dependent on $\mathfrak{Z}$) will be made later. Denote
$$
m_t^*=\max\{\lceil t-Nd+\varepsilon|t-t_*|\rceil, \, 0\},
$$
\begin{align}
\label{lmt} l_{m,t}=\lceil Nd-t-\varepsilon |t-t_*|\rceil+m, \quad
m\in \Z_+, \quad m\ge m_t^*,
\end{align}
\begin{align}
\label{tmtn1_def_1290} \hat T_{m,t,n}^1=\{U\in {\bf
R}_{j,l_{m,t}}, \; 2^t+1\le j\le 2^{t+1}\}.
\end{align}
Then $\hat T_{m,t,n}^1$ is a partition of the set $\hat G_t$. By
(\ref{fx468jhdewml_8h}) and (\ref{ooo}), for any $U\in \hat
T_{m,t,n}^1$
\begin{align}
\label{custmtn} {\rm card}\, \{U'\in \hat T_{m\pm 1,t,n}^1:\, {\rm
mes}\, (U\cap U')\ne 0\} \underset{\mathfrak{Z}}{\lesssim} 1.
\end{align}
From (\ref{ceij}), (\ref{zzz}) and (\ref{tmtn1_def_1290}) it
follows that
\begin{align}
\label{ctmnt} {\rm card}\, \hat T^1_{m,t,n}
\underset{\mathfrak{Z}} {\lesssim} 2^t\cdot 2^{l_{m,t}}\asymp
n\cdot 2^{m-\varepsilon|t-t_*|}.
\end{align}

For $f\in L_{q,v}(\Omega)$ we set
$$
P^1_{m,t,n}f|_{D_{j-1}} =\mathbb{P}_{j,l_{m,t}}f|_{D_{j-1}}, \quad
2^t+1\le j\le 2^{t+1}, \quad P^1_{m,t,n}f|_{\Omega \backslash \hat
G_t}=0.
$$
Then $P^1_{m,t,n}: L_{q,v}(\Omega) \rightarrow {\cal S}_{r, \hat
T^1_{m,t,n}}(\hat G_t)$ is a linear continuous operator. For any
function $f\in W^r_{p,g}(\Omega)$ we have
$$
\|f-P_{m,t,n}^1f \|_{p,q,\hat T_{m,t,n}^1,v} =\left( \sum \limits
_{j=2^t+1}^{2^{t+1}}\|f-\mathbb{P}_{j,l_{m,t}}f \|^p_{p,q,{\bf
R}_{j, l_{m,t}}, v}\right)^{\frac 1p} \stackrel{(\ref{a1}),
(\ref{a2})}{ \underset{\mathfrak{Z}}{\lesssim}}
$$
$$
\lesssim \left( \sum \limits _{j=2^t+1}^{2^{t+1}}
j^{-p\alpha}\rho^p(j)\cdot 2^{-\frac{p\delta}{d}l_{m,t}} \left\|
\frac{\nabla ^rf}{g}\right\|^p_{L_p(\Omega _{[2^{-j}, \,
2^{-j+1}]})} \right)^{\frac 1p}\underset{\mathfrak{Z}}{\lesssim}
$$
$$
\lesssim 2^{-t\alpha}\rho(2^t) 2^{-\frac{\delta}{d}l_{m,t}}\left(
\sum \limits _{j=2^t+1}^{2^{t+1}} \left\| \frac{\nabla
^rf}{g}\right\|^p_{L_p(\Omega _{[2^{-j}, \, 2^{-j+1}]})}
\right)^{\frac 1p} \stackrel{(\ref{nqt_z_1})} {\underset
{\mathfrak{Z}} {\lesssim}} 2^{-t\alpha}\rho(2^t)
2^{-\frac{\delta}{d}l_{m,t}}.
$$
Thus, for any function $f\in W^r_{p,g}(\Omega)$
\begin{align}
\label{appr} \|f-P_{m,t,n}^1f\|_{p,q,\hat T_{m,t,n}^1,v}
\underset{\mathfrak{Z}}{\lesssim} 2^{-t\alpha}\rho(2^t)
2^{-\frac{\delta}{d}l_{m,t}}.
\end{align}

{\bf Step 3.} Given $t\in \Z_+$, we set $U_t=\Omega_{[0, \,
2^{-2^t}]}$. By Corollary \ref{pol_appr} and (\ref{at}), there
exists a linear continuous operator $P^t:L_{q,v}(U_t) \rightarrow
{\cal P}_{r-1}(U_t)$ such that for any function $f\in
W^r_{p,g}(\Omega)$
\begin{align}
\label{bbb} \|f-P^tf\|_{L_{q,v}(U_t)} \underset{\mathfrak{Z}}
{\lesssim} 2^{-t\alpha}\rho(2^t).
\end{align}

{\bf Step 4.} Consider the cases $p=q$ and $p<q$, $\hat q\le 2$.
Denote $\hat U_t=U_t\backslash \cup_{s=0}^{t-1} G_s$, $t\in \N$.
We set $P_n^1f|_{\hat G_t}=P^1_{m_t^*,t,n}f|_{\hat G_t}$ for $0\le
t\le Nd$, $P_n^1f|_{\hat U_{Nd+1}}=P^{Nd+1}f|_{\hat U_{Nd+1}}$.
Denote $T^1_n=\left(\cup _{t=0}^{Nd} \hat
T^1_{m^*_t,t,n}\right)\cup \{\hat U_{Nd+1}\}$. Then $T^1_n$ is a
partition of $\Omega$ and
$$
{\rm card}\, T^1_n \stackrel{(\ref{lmt}),(\ref{ctmnt})}{\underset
{\mathfrak {Z}} {\lesssim}} \sum \limits_{0\le t\le Nd, \,
m^*_t=0} n\cdot 2^{-\varepsilon|t-t_*|}+ \sum \limits_{0\le t\le
Nd, \, m^*_t>0} 2^t\underset{\mathfrak{Z}, \varepsilon}{\lesssim}
n.
$$
In order to estimate Kolmogorov and linear widths, it is
sufficient to obtain an upper bound for
$\|f-P^1_nf\|_{L_{q,v}(\Omega)}$. In estimating Gelfand widths, we
obtain an upper bound of $\|f\|_{L_{q,v}(\Omega)}$ for $f\in
W^r_{p,g}(\Omega)$ such that $P^1_nf=0$.

We have
$$
\|f-P^1_nf\|^q_{L_{q,v}(\Omega)} =\sum \limits _{t=0}^{Nd} \|f-
P^1_{m^*_t,t,n}f\|^q_{L_{q,v}(\hat G_t)}+
\|f-P^{Nd+1}f\|^q_{L_{q,v}(\hat U_{Nd+1})}
\stackrel{(\ref{lmt}),(\ref{appr}),
(\ref{bbb})}{\underset{\mathfrak{Z}}{\lesssim}}
$$
$$
\lesssim \sum \limits_{t=0}^{Nd} 2^{-t\alpha q}\rho^q(2^t)
2^{-\frac{\delta q}{d}(Nd-t-\varepsilon|t-t_*|)}+n^{-\alpha q}
\rho^q(n)=:S^q
$$
(we used the fact that if $m_t^*>0$, then $Nd-t-\varepsilon
|t-t_*|<0$). Set $\varepsilon =\frac{d}{2\delta}\left|\alpha-
\frac{\delta}{d}\right|$. If $\alpha>\frac{\delta}{d}$, then we
put $t_*=0$ and get $S\underset{\mathfrak{Z}}{\lesssim}
n^{-\frac{\delta}{d}}$; if $\alpha<\frac{\delta}{d}$, then we take
$t_*=Nd$ and obtain $S\underset{\mathfrak{Z}}{\lesssim}
n^{-\alpha}\rho(n)$.

{\bf Step 5.} Consider the case $p<q$, $\hat q>2$. Denote $\hat
t_N=\left[\frac{\hat q}{2}Nd\right]$. Then for sufficiently large
$N$ we have $Nd+1< \hat t_N$.

For each $Nd+1\le t<\hat t_N$, $m\in \Z_+$ we construct the
partition $\hat T^2_{m,t,n}$ of $\hat G_t$.

{\bf Substep 5.1.} Let $\gamma=2^{1+\varepsilon}$,
$m_t=\lceil(1+\varepsilon)(t-Nd)\rceil$, $0\le m\le m_t$. Set
$$
j_{m,t}(s)=\left\lfloor \gamma^{t-Nd}2^{-m}s\right\rfloor,  \quad
\tau_{m,t}(s)=2^{-j_{m,t}(s)},\quad s\in \Z_+,
$$
$$
J_{m,t}=\{s\in \Z_+:\, j_{m,t}(s)\le 2^{t+1}, \, j_{m,t}(s+1)\ge
2^t\}.
$$
Then ${\rm card}\, J_{m,t}\underset{\mathfrak{Z}}{\lesssim} 2^mn
\cdot 2^{-\varepsilon(t -Nd)}$ for sufficiently small
$\varepsilon$.

Denote by $T^2_{m,t,n}$ the covering of $G_t$ by the sets $\Omega
_{[\tau_{m,t}(s+1),\tau_{m,t}(s)]}$, $s\in J_{m,t}$,
$\tau_{m,t}(s) \ne \tau_{m,t}(s+1)$. Then ${\rm card}\,
T^2_{m,t,n} \underset {\mathfrak{Z}} {\lesssim} 2^mn \cdot
2^{-\varepsilon(t -Nd)}$. If $\tau_{m,t}(s) \ne \tau_{m,t}(s+1)$,
then $\tau_{m,t}(s)- \tau_{m,t}(s+1) \asymp \tau_{m,t}(s)
\stackrel{(\ref{phi_z_z})}{\ge} \varphi(\tau_{m,t}(s))$.
Therefore, from the definition of $\tau_{m,t}(s)$ and from Lemma
\ref{kratn_pokr} it follows that for any set $U\in T^2_{m,t,n}$
\begin{align}
\label{l011} {\rm card}\, \{U'\in T^2_{m+l,t,n}:\, {\rm mes}\,
(U\cap U')>0\} \underset{\mathfrak{Z}}{\lesssim} 1, \;\; l=0, \,
1, \, -1.
\end{align}
By Corollary \ref{pol_appr} and (\ref{at}), for any set $E\in
T^2_{m,t,n}$ there exists a linear continuous operator
$P_E:L_{q,v}(E)\rightarrow {\cal P}_{r-1}(E)$ such that for any
function $f\in W^r_{p,g}(\Omega)$
\begin{align}
\label{peeee}
\|f-P_Ef\|_{L_{q,v}(E)}\underset{\mathfrak{Z}}{\lesssim}
2^{-t\alpha} \rho(2^t)\left\|\frac{\nabla^r
f}{g}\right\|_{L_p(E)}.
\end{align}

By Proposition \ref{pokr_razb}, there exist a partition $\hat
T^2_{m,t,n}$ of $\hat G_t$ and an injection ${\cal F}_{m,t,n}:
\hat T^2_{m,t,n} \rightarrow T^2_{m,t,n}$ such that
\begin{align}
\label{ct2} {\rm card}\, \hat T^2_{m,t,n}
\underset{\mathfrak{Z}}{\lesssim} 2^mn \cdot 2^{-\varepsilon(t
-Nd)}; \quad \text{ in particular,}\quad {\rm card}\, \hat
T^2_{m_t,t,n}\underset{\mathfrak{Z}}{\lesssim} 2^t;
\end{align}
for any $U\in \hat T^2_{m,t,n}$
\begin{align}
\label{2inters} {\rm card}\, \{U'\in \hat T^2_{m\pm 1,t,n}:\, {\rm
mes}\, (U\cap U')>0\} \underset{\mathfrak{Z}}{\lesssim} 1
\end{align}
and $U\subset {\cal F}_{m,t,n}(U)$. In addition, there exists a
linear continuous operator $P^2_{m,t,n}:L_{q,v}(\Omega)
\rightarrow {\cal S}_{r,\hat T^2_{m,t,n}}(\hat G_t)$ such that for
any function $f\in W^r_{p,g}(\Omega)$
\begin{align}
\label{2appr} \|f-P^2_{m,t,n}f\|_{p,q,\hat T^2_{m,t,n},
v}\underset{\mathfrak{Z}}{\lesssim} 2^{-t\alpha} \rho(2^t).
\end{align}
Notice that $\hat T^2_{m,t,n}$ can be defined as follows:
\begin{align}
\label{mtttt} \begin{array}{c} E \in \hat T^2_{m,t,n} \quad
\Leftrightarrow  \quad {\rm mes}\, E>0 \quad \text{and} \\ \exists
s\in J_{m,t} :\quad E=\hat G_t \cap \Omega
_{[\tau_{m,t}(s+1),\tau_{m,t}(s)]} \backslash \cup _{s'\in
J_{m,t},s'<s} \Omega _{[\tau_{m,t} (s'+1), \tau_{m,t}(s')]}; \\
\text{in this case,} \quad {\cal F}_{m,t,n}(E)=\Omega
_{[\tau_{m,t}(s+1),\tau_{m,t}(s)]}
\end{array}
\end{align}
(see the proof of Proposition \ref{pokr_razb}).

{\bf Substep 5.2.} Construct the partition $\hat T^2_{m,t,n}$ for
$m>m_t$. Observe that for any $s\in J_{m_t,t}$ the inequality
$|j_{m_t,t}(s+1)-j_{m_t,t}(s)|\le 1$ holds. Therefore, each
element of the covering $T^2_{m_t,t,n}$ coincides with
$\Omega_{[2^{-j},2^{-j+1}]}$ for some $j\in \{2^t, \, \dots , \,
2^{t+1}+1\}$. Recall that $\hat G_t \subset
G_t=\Omega_{[2^{-2^{t+1}}, \, 2^{-2^t}]}$. This together with
(\ref{mtttt}) implies that $$\hat T^2_{m_t,t,n}=\{D_{j-1}:\;
2^t+1\le j\le 2^{t+1}\}$$ and ${\cal
F}_{m_t,t,n}(D_{j-1})=\Omega_{[2^{-j},2^{-j+1}]}$.

Put
$$
\hat T^2_{m,t,n}=\{E\in {\bf R}_{j,m-m_t}:\; 2^t+1\le j\le
2^{t+1}\}.
$$
Then $\hat T^2_{m,t,n}$ is a partition of $\hat G_t$. From
(\ref{ceij}), (\ref{zzz}) and (\ref{ct2}) it follows that
\begin{align}
\label{ct22} {\rm card}\, \hat T^2_{m,t,n}\lesssim 2^{m-m_t}\cdot
2^t\asymp 2^mn\cdot 2^{-\varepsilon(t -Nd)}.
\end{align}
By (\ref{fx468jhdewml_8h}) and (\ref{ooo}), for any $E\in \hat
T^2_{m,t,n}$
\begin{align}
\label{2inters1} {\rm card}\, \{E'\in \hat T^2_{m\pm 1,t,n}:\,
{\rm mes}\, (E\cap E')>0\} \underset{\mathfrak{Z}}{\lesssim} 1.
\end{align}
For any $f\in L_{q,v}(\Omega)$ set $P^2_{m,t,n}f|_{D_{j-1}}=
\mathbb{P}_{j,m-m_t}f|_{D_{j-1}}$, $2^t+1\le j\le 2^{t+1}$,
$P^2_{m,t,n}f|_{\Omega \backslash \hat G_t}=0$. Then $P^2_{m,t,n}:
L_{q,v}(\Omega) \rightarrow {\cal S}_{r, \hat T^2_{m,t,n}}(\hat
G_t)$ is a linear continuous operator and for any $f\in
W^r_{p,g}(\Omega)$
$$
\|f-P^2_{m,t,n}f\|_{p,q,\hat T^2_{m,t,n},v} =\left( \sum \limits
_{j=2^t+1}^{2^{t+1}} \|f-\mathbb{P}_{j,m-m_t}f\|^p_{p,q, {\bf
R}_{j,m-m_t},v}\right)^{\frac 1p} \stackrel{(\ref{a1}),
(\ref{a2})}{\underset {\mathfrak{Z}}{\lesssim}}
$$
$$
\lesssim \left( \sum \limits _{j=2^t+1}^{2^{t+1}} j^{-p\alpha}
\rho^p(j)\cdot 2^{-\frac{ p\delta}{d}(m-m_t)}\left\|
\frac{\nabla^r f}{g}\right\|^p_{L_p(\Omega _{[2^{-j}, \,
2^{-j+1}]})} \right)^{\frac 1p} \underset{\mathfrak{Z}}{\lesssim}
$$
$$
\lesssim 2^{-t\alpha-(m-m_t)\frac{\delta}{d}}\rho(2^t)\left( \sum
\limits _{j=2^t+1}^{2^{t+1}} \left\| \frac{\nabla^r f}{g}
\right\|^p _{L_p(\Omega _{[2^{-j}, \, 2^{-j+1}]})}\right)^{\frac
1p} \stackrel{(\ref{nqt_z_1})}{\underset{\mathfrak{Z}}{\lesssim}}
2^{-t\alpha-(m-m_t)\frac{\delta}{d}}\rho(2^t).
$$
Thus, for any function $f\in W^r_{p,g}(\Omega)$
\begin{align}
\label{2appr1} \|f-P^2_{m,t,n}f\|_{p,q,\hat T^2_{m,t,n},
v}\underset{\mathfrak{Z}}{\lesssim}
2^{-t\alpha-(m-m_t)\frac{\delta}{d}} \rho(2^t).
\end{align}

{\bf Substep 5.3.} Let $f\in W^r_{p,g}(\Omega)$. Then
\begin{align}
\label{f_ptnf} \|f-P^{\hat t_N}f\|_{L_{q,v}(U_{\hat t_n})}
\stackrel{(\ref{bbb})}{\underset{\mathfrak{Z}}{\lesssim}}
n^{-\alpha \hat q/2}\rho(n^{\hat q/2}).
\end{align}

The further arguments are the same as in \cite{vas_sing}; here we
apply (\ref{custmtn}), (\ref{ctmnt}), (\ref{appr}), (\ref{ct2}),
(\ref{2inters}), (\ref{2appr}), (\ref{ct22}), (\ref{2inters1}),
(\ref{2appr1}) and (\ref{f_ptnf}).

{\it Proof of the lower estimate.} Observe that $\Omega$ contains
a cube $Q$ with the side of length $\lambda
\underset{\mathfrak{Z}}{\gtrsim} 1$, such that $g(x)
\underset{\mathfrak{Z}}{\gtrsim} 1$, $v(x)
\underset{\mathfrak{Z}}{\gtrsim} 1$ for any $x\in Q$. Therefore,
$$\vartheta _n(W^r_{p,g}(\Omega), \,
L_{q,v}(\Omega))\underset{\mathfrak{Z}} {\gtrsim} \vartheta
_n(W^r_p([0, \, 1]^d), \, L_q([0, \, 1]^d)).$$ Applying Theorem
\ref{sob_dn}, we get
$$
\vartheta_n(W^r_{p,g}(\Omega), \,
L_{q,v}(\Omega))\underset{\mathfrak{Z}} {\gtrsim}
n^{-\frac{\delta}{d}} \quad \mbox{for} \quad p=q\;\text{ or }\;
p<q, \;  \hat q\le 2,
$$
$$
\vartheta_n(W^r_{p,g}(\Omega), \,
L_{q,v}(\Omega))\underset{\mathfrak{Z}} {\gtrsim}
n^{-\min\{\theta_1, \, \theta_2\}} \quad \mbox{for} \quad p<q, \;
\hat q>2.
$$

Let $\psi\in C_0^\infty(0, \, 1)$, $\psi\ge 0$, $\int \limits _0^1
|\psi^{(r)}|^p\, dx=1$. Given $j\in \N$, we set $\psi_j(x', \,
x_d)=c_j\psi\left(2^jx_d-1\right)$; here $c_j>0$ is such that
$\left\|\frac{\nabla^r \psi_j}{g}\right\|_{L_p(\Omega)}=1$. Then
$${\rm supp}\, \psi_j\subset \{(x', \, x_d)\in \Omega:\,
2^{-j}<x_d<2^{-j+1}\}.$$ From (\ref{dist}) and order equalities
$g(x) \stackrel{(\ref{g0v0})}{\underset{\mathfrak{Z}}{\asymp}}
g(2^{-j})$, $v(x) \stackrel {(\ref{g0v0})}
{\underset{\mathfrak{Z}}{\asymp}} v(2^{-j})$,
$2^{-j}<x_d<2^{-j+1}$, it follows that
$c_j\underset{\mathfrak{Z}}{\asymp}
g(2^{-j})\varphi(2^{-j})^{-\frac{d-1}{p}}2^{\frac{j}{p}}\cdot
2^{-jr}$ and
$$\|\psi_j\|_{L_{q,v}(\Omega)} \underset{\mathfrak{Z}}{\asymp}
c_j\cdot v(2^{-j})\varphi(2^{-j})^{\frac{d-1}{q}} 2^{-\frac
jq}\underset{\mathfrak{Z}}{\asymp} j^{-\alpha}\rho(j)$$ (the last
relation follows from (\ref{g0v0}),  (\ref{phi_def}),
(\ref{limtrho}), (\ref{sigma_prop}), (\ref{alpha_prop}),
(\ref{rho_def})). If $2^t\le j<2^{t+1}$, then
$\|\psi_j\|_{L_{q,v}(\Omega)} \underset{\mathfrak{Z}}{\asymp}
2^{-t\alpha}\rho(2^t)$. Applying Lemma \ref{low}, we get
$$
\vartheta_n(W^r_{p,g}(\Omega), \,
L_{q,v}(\Omega))\underset{\mathfrak{Z}} {\gtrsim}
2^{-t\alpha}\rho(2^t) \vartheta_n(B_p^{2^t}, \, l_q^{2^t}).
$$

Let $t=Nd+1$. Then from (\ref{gluskin}), (\ref{gluskin_lin}) and
(\ref{gluskin_gelf}) it follows that
$$
\vartheta _n(W^r_{p,g}(\Omega), \,
L_{q,v}(\Omega))\underset{\mathfrak{Z}} {\gtrsim}
n^{-\alpha}\rho(n) d_n(B_p^{2n}, \, l_q^{2n})
\underset{p,q}{\asymp} \left\{\begin{array}{l} n^{-\alpha}\rho(n),
\quad p=q \; \text{ or }\; p<q, \; \hat q\le 2,
\\ n^{-\alpha-\min\{\frac 1p-\frac 1q, \, \frac 12-\frac {1}{\hat q}\}},
\quad \hat q>2.\end{array}\right.
$$

Let $t=\left[\frac{\hat q}{2}Nd\right]$, $\hat q>2$. Then
$$\vartheta_n(W^r_{p,g}(\Omega), \, L_{q,v}(\Omega))\underset{\mathfrak{Z}}
{\gtrsim} n^{-\hat q\alpha /2}\rho(n^{\hat q/2})
\vartheta_n(B_p^{[n^{\hat q/2}]}, \, l_q^{[n^{\hat q/2}]})
\underset{\mathfrak{Z}}{\asymp} n^{-\hat q\alpha /2}\rho(n^{\hat
q/2}).$$ This completes the proof.
\end{proof}
In conclusion, the author expresses her sincere gratitude to A.S.
Kochurov for reading the manuscript.
\begin{Biblio}
\bibitem{adams} D.R. Adams, ``Traces of potentials. II'', {\it Indiana Univ.
Math. J.}, {\bf 22} (1972/73), 907–918.

\bibitem{adams1} D.R. Adams, ``A trace
inequality for generalized potentials'', {\it Studia Math.} {\bf
48} (1973), 99–105.

\bibitem{ait_kus1} M.S. Aitenova, L.K. Kusainova, ``On the asymptotics of the distribution of approximation
numbers of embeddings of weighted Sobolev classes. I, II'', {\it
Mat. Zh.}, {\bf 2}:1 (2002), 3–9; {\bf 2}:2 (2002), 7–14.

\bibitem{besov_84} O.V. Besov, ``Integral representations of functions and embedding
theorems for a domain with a flexible horn condition'' (Russian),
{\it Studies in the theory of differentiable functions of several
variables and its applications, X. Trudy Mat. Inst. Steklov}, {\bf
170} (1984), 12–30.

\bibitem{besov_86} O.V. Besov, ``Embedding of Sobolev spaces in domains
with a decaying flexible cone
condition'' (Russian), {\it Trudy Mat. Inst. Steklov}, {\bf 173}
(1986), 14–31.

\bibitem{besov1} O.V. Besov, ``On the compactness of embeddings of weighted Sobolev spaces
on a domain with an irregular boundary'', {\it Tr. Mat. Inst.
Steklova}, {\bf 232} (2001), 72--93 [{\it Proc. Steklov Inst.
Math.}, {\bf 1} ({\bf 232}), 66--87 (2001)].

\bibitem{besov2} O.V. Besov, ``Sobolev’s embedding theorem for a domain with an
irregular boundary'', {\it Mat. Sb.} {\bf 192}:3 (2001), 3–26
[{\it Sb. Math.} {\bf 192}:3--4 (2001), 323–346].

\bibitem{besov3} O.V. Besov, ``On the compactness of embeddings of weighted
Sobolev spaces on a domain with an irregular boundary'', {\it
Dokl. Akad. Nauk} {\bf 376}:6 (2001), 727--732 [{\it Doklady
Math.} {\bf 63}:1 (2001), 95--100].

\bibitem{besov4} O.V. Besov, ``Integral estimates for differentiable functions
on irregular domains'', {\it Mat. Sb.} {\bf 201}:12 (2010), 69–82
[{\it Sb. Math.} {\bf 201}:12 (2010), 1777--1790].

\bibitem{besov5} O.V. Besov, ``Sobolev's embedding theorem for
anisotropically irregular domains'', {\it Eurasian Math. J.}, {\bf
2}:1 (2011), 32–51.

\bibitem{besov_peak_width} O.V. Besov, ``On Kolmogorov widths
of Sobolev classes on an irregular domain'', {\it Proc. Steklov
Inst. Math.}, {\bf 280} (2013), 34--45.

\bibitem{besov_il1}  O.V. Besov, V.P. Il'in, S.M. Nikol'skii,
{\it Integral representations of functions, and imbedding
theorems}. ``Nauka'', Moscow, 1996. [Winston, Washington DC;
Wiley, New York, 1979].

\bibitem{birm} M.Sh. Birman and M.Z. Solomyak, ``Piecewise polynomial
approximations of functions of classes $W^\alpha_p$'', {\it Mat.
Sb.} {\bf 73}:3 (1967), 331-–355.

\bibitem{b_bojarski} B. Bojarski, ``Remarks on Sobolev
imbedding inequalities'', (Complex Analysis, Joensuu, 1987). {\it
Lecture Notes in Math.}, vol. 1351, Springer, Berlin, 1988, pp.
52--68.

\bibitem{boy_1} I.V. Boykov, ``Approximation of Some Classes
of Functions by Local Splines'', {\it Comput. Math. Math. Phys.},
{\bf 38}:1 (1998), 21-29.

\bibitem{boy_2} I.V. Boykov, ``Optimal approximation and Kolmogorov
widths estimates for certain singular classes related to equations
of mathematical physics'', arXiv:1303.0416v1.

\bibitem{ambr_l} L. Caso, R. D’Ambrosio, ``Weighted spaces and
weighted norm inequalities on irregular domains'', {\it J. Appr.
Theory}, {\bf 167} (2013), 42–58.

\bibitem{de_vore_sharpley} R.A. DeVore, R.C. Sharpley,
S.D. Riemenschneider, ``$n$-widths for $C^\alpha_p$ spaces'', {\it
Anniversary volume on approximation theory and functional analysis
(Oberwolfach, 1983)}, 213–222, Internat. Schriftenreihe Numer.
Math., {\bf 65}, Birkh\"{a}user, Basel, 1984.

\bibitem{edm_ev_book} D.E. Edmunds, W.D. Evans, {\it Hardy operators, function spaces and embeddings}.
Springer, Berlin, 2004.

\bibitem{edm_lang} D.E. Edmunds, J. Lang, ``Approximation numbers and Kolmogorov
widths of Hardy-type operators in a non-homogeneous case'', {\it
Math. Nachr.}, {\bf 297}:7 (2006), 727--742.

\bibitem{edm_lang1} D.E. Edmunds, J. Lang, ``Gelfand numbers and
widths'', {\it J. Approx. Theory}, {\bf 166} (2013), 78--84.

\bibitem{edm_trieb_book} D.E. Edmunds, H. Triebel, {\it Function spaces,
entropy numbers, differential operators}. Cambridge Tracts in
Mathematics, {\bf 120} (1996). Cambridge University Press.

\bibitem{el_kolli} A. El Kolli, ``$n$-i\`{e}me \'{e}paisseur dans les espaces de Sobolev'',
{\it J. Approx. Theory}, {\bf 10} (1974), 268--294.

\bibitem{evans_har} W.D. Evans, D.J. Harris, ``Fractals, trees and the Neumann
Laplacian'', {\it Math. Ann.}, {\bf 296}:3 (1993), 493--527.

\bibitem{bibl12} E.M. Galeev, ``Approximation of certain classes
of periodic functions of several variables by Fourier sums in the
$\widetilde L_p$ metric'', {\it Uspekhi Mat. Nauk}, {\bf 32}:4
(1977), 251--252 [{\it Russ. Math. Surv.}; in Russian].

\bibitem{bibl13} E.M. Galeev, ``The Approximation of classes of functions
with several bounded derivatives by Fourier sums'', {\it Math.
Notes}, {\bf 23}:2 (1978), 109--117.

\bibitem{gal1} E.M. Galeev, ``Estimates for widths, in the sense of Kolmogorov,
of classes of periodic functions of several variables with
small-order smoothness'' (Russian), {\it Vestnik Moskov. Univ.
Ser. I Mat. Mekh.}, 1987, no. 1, 26–30.

\bibitem{gal2} E.M. Galeev, ``Widths of function classes and finite-dimensional sets'' (Russian),
{\it Vladikavkaz. Mat. Zh.} {\bf 13}:2 (2011), 3-–14.

\bibitem{garn_glus} A.Yu. Garnaev and E.D. Gluskin, ``The widths of a Euclidean ball'',
{\it Soviet Math. Dokl.}, {\bf 30}:1 (1984), 200-–204.

\bibitem{bib_gluskin} E.D. Gluskin, ``Norms of random matrices and diameters
of finite-dimensional sets'', {\it Math. USSR-Sb.}, {\bf 48}:1
(1984), 173--182.

\bibitem{haroske1} D.D. Haroske, L. Skrzypczak, ``Entropy and
approximation numbers of function spaces with Muckenhoupt weights,
I'', {\it Rev. Mat. Complut.}, {\bf 21}:1 (2008), 135--177.

\bibitem{haroske3} D.D. Haroske, L. Skrzypczak, ``Entropy numbers
of function spaces with Muckenhoupt weights, III'', {\it J. Funct.
Spaces Appl.} {\bf 9}:2 (2011), 129–178.

\bibitem{hajl_kosk} P. Hajlasz, P. Koskela, ``Isoperimetric inequalities
and imbedding theorems in irregular domains'', {\it J. London
Math. Soc. (2)}, {\bf 58}:2 (1998), 425–450.

\bibitem{heinr} S. Heinrich,
``On the relation between linear $n$-widths and approximation
numbers'', {\it J. Approx. Theory}, {\bf 58}:3 (1989), 315–333.

\bibitem{bib_ismag} R.S. Ismagilov, ``Diameters of sets in normed linear spaces,
and the approximation of functions by trigonometric polynomials'',
{\it Russ. Math. Surv.}, {\bf 29}:3 (1974), 169–186.

\bibitem{bib_kashin} B.S. Kashin, ``The widths of certain finite-dimensional
sets and classes of smooth functions'', {\it Math. USSR-Izv.},
{\bf 11}:2 (1977), 317–-333.

\bibitem{kashin1} B.S. Kashin, ``Widths of Sobolev classes of small-order smoothness'',
{\it Moscow Univ. Math. Bull.}, {\bf 36}:5 (1981), 62--66.

\bibitem{kilp_maly} T. Kilpel\"{a}inen, J. Mal\'{y},
``Sobolev inequalities on sets with irregular boundaries'', {\it
Z. Anal. Anwendungen}, {\bf 19}:2 (2000), 369–380.

\bibitem{kudr_nik} L.D. Kudryavtsev and S.M. Nikol’skii, ``Spaces of differentiable
functions of several variables and embedding theorems'', Current
problems in mathematics. Fundamental directions 26, 5–-157 (1988).
Akad. Nauk SSSR, Vsesoyuz. Inst. Nauchn. i Tekhn. Inform., Moscow,
1988. [Analysis III, Spaces of differentiable functions, Encycl.
Math. Sci., vol. 26. Heidelberg, Springer, 1990, 4--140].

\bibitem{kufner} A. Kufner, {\it Weighted Sobolev spaces}. Teubner-Texte Math., 31.
Leipzig: Teubner, 1980.

\bibitem{kulanin} E.D. Kulanin, {\it Estimates for diameters of Sobolev classes of
small-order smoothness}. Cand. Sci (Phys.-Math.) Dissertation.
MSU, Moscow, 1986 [in Russian].

\bibitem{labutin1} D.A. Labutin, ``Integral representation of functions
and the embedding of Sobolev spaces on domains with zero angles''
(Russian), {\it Mat. Zametki}, {\bf 61}:2 (1997), 201--219;
translation in {\it Math. Notes}, {\bf 61} (1997), no. 1-2,
164–179.

\bibitem{labutin2} D.A. Labutin, ``Embedding of Sobolev spaces on Holder domains''
(Russian), {\it Tr. Mat. Inst. Steklova}, {\bf 227} (1999),
170--179; translation in {\it Proc. Steklov Inst. Math.},  {\bf
227}:4 (1999), 163–172.

\bibitem{lang_j_at1} J. Lang, ``Improved estimates for the approximation numbers of
Hardy-type operators'', {\it J. Appr. Theory}, {\bf 121}:1 (2003),
61--70.

\bibitem{lifs_linde} M.A. Lifshits, W. Linde, ``Approximation and entropy numbers of
Volterra operators with application to Brownian motion'', Mem.
Amer. Math. Soc., {\bf 745} (2002).

\bibitem{liz_otel} P.I. Lizorkin and M. Otelbaev, ``Imbedding and compactness
theorems for Sobolev-type spaces with weights. I, II'', {\it Mat.
Sb.}, {\bf 108}:3 (1979), 358–377; {\bf 112}:1 (1980), 56–85 [{\it
Math. USSR-Sb.} {\bf 40}:1, (1981) 51–77].

\bibitem{liz_otel1} P.I. Lizorkin, M. Otelbaev, ``Estimates of
approximate numbers of the imbedding operators for spaces of
Sobolev type with weights'', {\it Trudy Mat. Inst. Steklova}, {\bf
170} (1984), 213–232 [{\it Proc. Steklov Inst. Math.}, {\bf 170}
(1987), 245–266].

\bibitem{lom_step_hardy} E.N. Lomakina, V.D. Stepanov, ``On asymptotic
behavior of the approximation numbers and estimates of
Schatten--von Neumann norms of the Hardy-type integral
operators''. In: Function Spaces and Applications (Proceedings of
Delhi Conference, 1997), Narosa Publishing House, New Delhi, 2000,
153--187.

\bibitem{step_lom} E.N. Lomakina, V.D. Stepanov, ``Asymptotic estimates for the approximation and entropy
numbers of the one-weight Riemann–Liouville operator'', {\it Mat.
Tr.}, {\bf 9}:1 (2006), 52–100 [{\it Siberian Adv. Math.}, {\bf
17}:1 (2007), 1–36].

\bibitem{bib_majorov} V.E. Maiorov, ``Discretization of the problem of diameters'',
{\it Uspekhi Mat. Nauk}, {\bf 30}:6 (1975), 179--180.

\bibitem{bib_makovoz} Yu.I. Makovoz, ``A Certain Method of Obtaining
Lower Estimates for Diameters of Sets in Banach Spaces'', {\it
Math. USSR-Sb.}, {\bf 16}:1 (1972), 139--146.

\bibitem{mazya60} V.G. Maz'ya, ``Classes of domains and imbedding theorems for function spaces'',
{\it Dokl. Akad. Nauk SSSR}, {\bf 133}:3, 527--530 (Russian);
translated as {\it Soviet Math. Dokl.} {\bf 1} (1960), 882–885.

\bibitem{mazya_poborchii} V.G. Maz'ya, S.V. Poborchi,
``Theorems for embedding Sobolev spaces on domains with a peak and
on H\"{o}lder domains'' (Russian), {\it Algebra i Analiz}, {\bf
18}:4 (2006), 95--126; translation in {\it St. Petersburg Math.
J.}, {\bf 18}:4 (2007), 583–605.

\bibitem{otelbaev} M.O. Otelbaev, ``Estimates of the diameters
in the sense of Kolmogorov for a class of weighted spaces'', {\it
Dokl. Akad. Nauk SSSR}, {\bf 235}:6 (1977), 1270–1273 [Soviet
Math. Dokl.].

\bibitem{pietsch1} A. Pietsch, ``$s$-numbers of operators
in Banach space'', {\it Studia Math.}, {\bf 51} (1974), 201--223.

\bibitem{kniga_pinkusa} A. Pinkus, {\it $n$-widths
in approximation theory.} Berlin: Springer, 1985.

\bibitem{resh1} Yu.G. Reshetnyak, ``Integral representations of
differentiable functions in domains with a nonsmooth boundary'',
{\it Sibirsk. Mat. Zh.}, {\bf 21}:6 (1980), 108--116 (in Russian).

\bibitem{resh2}  Yu.G. Reshetnyak, ``A remark on integral representations
of differentiable functions of several variables'', {\it Sibirsk.
Mat. Zh.}, {\bf 25}:5 (1984), 198--200 (in Russian).

\bibitem{sobol38} S.L. Sobolev, ``On a theorem of functional
analysis'', {\it Mat. Sb.}, {\bf 4} ({\bf 46}):3 (1938), 471--497
[{\it Amer. Math. Soc. Transl.}, ({\bf 2}) {\bf 34} (1963),
39--68.]
\bibitem{sobolev1} S.L. Sobolev, {\it Some applications of functional analysis
in mathematical physics}. Izdat. Leningrad. Gos. Univ., Leningrad,
1950 [Amer. Math. Soc., 1963].

\bibitem{stepanov2} V.D. Stepanov, ``Two-weight estimates
for Riemann -- Liouville integrals'',  {\it Izv. Akad. Nauk SSSR
Ser. Mat.} {\bf 54}:3 (1990), 645--656; transl.: {\it Math.
USSR-Izv.}, {\bf 36}:3 (1991), 669–681.

\bibitem{stesin} M.I. Stesin, ``Aleksandrov diameters of finite-dimensional sets
and of classes of smooth functions'', {\it Dokl. Akad. Nauk SSSR},
{\bf 220}:6 (1975), 1278--1281 [Soviet Math. Dokl.].

\bibitem{bibl9} V.N. Temlyakov,  ``Approximation of periodic functions
of several variables with bounded mixed derivative'', {\it Dokl.
Akad. Nauk SSSR}, {\it 253}:3 (1980), 544--548.
\bibitem{bibl10} V.N. Temlyakov,  ``Diameters of some classes of functions
of several variables'', {\it Dokl. Akad. Nauk SSSR}, {\bf 267}:3
(1982), 314--317.

\bibitem{bibl11} V.N. Temlyakov,  ``Approximation of functions with
bounded mixed difference by trigonometric polynomials, and
diameters of certain classes of functions'', {\it Math.
USSR-Izv.}, {\bf 20}:1 (1983), 173–187.

\bibitem{bibl6} V.M. Tikhomirov, ``Diameters of sets in functional spaces
and the theory of best approximations'', {\it Russian Math.
Surveys}, {\bf 15}:3 (1960), 75--111.

\bibitem{tikh_nvtp} V.M. Tikhomirov, {\it Some questions in approximation theory}.
Izdat. Moskov. Univ., Moscow, 1976 [in Russian].

\bibitem{itogi_nt} V.M. Tikhomirov, ``Theory of approximations''. In: {\it Current problems in
mathematics. Fundamental directions.} vol. 14. ({\it Itogi Nauki i
Tekhniki}) (Akad. Nauk SSSR, Vsesoyuz. Inst. Nauchn. i Tekhn.
Inform., Moscow, 1987), pp. 103–260 [Encycl. Math. Sci. vol. 14,
1990, pp. 93--243].

\bibitem{tikh_babaj} V.M. Tikhomirov and S.B. Babadzanov, ``Diameters of a
function class in an $L^p$-space $(p\ge 1)$'', {\it Izv. Akad.
Nauk UzSSR, Ser. Fiz. Mat. Nauk}, {\bf 11}(2) (1967), 24--30 (in
Russian).

\bibitem{triebel} H. Triebel, {\it Interpolation theory, function spaces,
differential operators} (North-Holland Mathematical Library, 18,
North-Holland Publishing Co., Amsterdam–New York, 1978; Mir,
Moscow, 1980).

\bibitem{triebel1} H. Triebel, {\it Theory of function spaces III}. Birkh\"{a}user, Basel, 2006.

\bibitem{tr_jat} H. Triebel, ``Entropy and approximation numbers of limiting embeddings, an approach
via Hardy inequalities and quadratic forms'', {\it J. Approx.
Theory}, {\bf 164}:1 (2012), 31--46.

\bibitem{turesson} B.O. Turesson, {\it Nonlinear potential
theory and weighted Sobolev spaces}. Lecture Notes in Mathematics,
1736. Springer, 2000.

\bibitem{vas_alg_an} A.A. Vasil'eva, ``Kolmogorov widths
and approximation numbers of Sobolev classes with singular
weights'', {\it Algebra i Analiz}, {\bf 24}:1 (2012), 3--39 [St.
Petersburg Math. J., {\bf 24}:1 (2013), 1--27].

\bibitem{vas_john} A.A. Vasil'eva, ``Widths of weighted Sobolev classes on a John domain'',
{\it Proc. Steklov Inst. Math.}, {\bf 280} (2013), 91--119.

\bibitem{vas_sing} A.A. Vasil'eva, ``Widths of weighted Sobolev classes
on a John domain: strong singularity at a point'' (submitted to
Rev. Mat. Compl.).

\bibitem{vas_bes} A.A. Vasil'eva, ``Kolmogorov and linear
widths of the weighted Besov classes with singularity at the
origin'', {\it J. Appr. Theory}, {\bf 167} (2013), 1--41.

\bibitem{vybiral} J. Vybiral, ``Widths of embeddings in function spaces'', {\it Journal of Complexity},
{\bf 24} (2008), 545--570.

\end{Biblio}
\end{document}